\newtheorem{Theorem}{Theorem}[section] 
\newtheorem{Definition}[Theorem]{Definition} 
\newtheorem{Corollary}[Theorem]{Corollary} 
\newtheorem{Lemma}[Theorem]{Lemma} 
\newtheorem{Proposition}[Theorem]{Proposition} 
\newtheorem{Remark}{Remark}[section]
\title[Estimates of lifespan for semilinear damped wave equations]{
Estimates of lifespan and blow-up rates for the wave equation
with a time-dependent damping and a power-type nonlinearity}
\author{Kazumasa Fujiwara}
\address{%
Department of Pure and Applied Physics,
Waseda University,
3-4-1, Okubo, Shinjuku-ku,
Tokyo, 169-8555, Japan}
\email{k-fujiwara@asagi.waseda.jp}
\author{Masahiro Ikeda}
\address{%
Department of Mathematics,
Graduate School of Science,
Kyoto University, Kyoto,
Kyoto 606-8502, Japan}
\email{mikeda@math.kyoto-u.ac.jp}
\author{Yuta Wakasugi}
\address{%
Graduate School of Mathematics,
Nagoya University, Furocho, Chikusaku,
Nagoya 464-8602, Japan
}
\email{yuta.wakasugi@math.nagoya-u.ac.jp}
\begin{document}
\maketitle

\begin{abstract}
We study estimates of lifespan and blow-up rates
of solutions for the Cauchy problem of
the wave equation with a time-dependent damping
and a power-type nonlinearity.
When the damping acts on the solutions effectively,
and the nonlinearity belongs to the subcritical case,
we show the sharp lifespan estimates and the blow-up rates of solutions.
The upper estimates are proved by an ODE argument, and
the lower estimates are given by a method of scaling variables.
\end{abstract}

\section{Introduction}
We consider the Cauchy problem of the
wave equation with a time-dependent damping
and a power-type nonlinearity
	\begin{align}
	\begin{cases}
	\Box u + b(t) u_t = |u|^{p},
	&\qquad t \in \lbrack 0,T), \quad x \in \mathbb R^n,\\
	u(0) = u_0,
	\quad u_t(0) = u_1,
	&\qquad x \in \mathbb R^n.
	\end{cases}
	\label{eq:1}
	\end{align}
Here
$u= u(t,x)$ is a real-valued unknown function,
$b(t)$
is a smooth positive function,
$\Box$ denotes $\partial_t^2 - \Delta_x$,
and
$u_0=u_0(x), u_1=u_1(x)$ are given initial data.

Damped wave equations are known as models
describing the voltage and the current on an electrical transmission line
with a resistance.
It is also derived as a modified heat conduction equation
from the heat balance law and the so-called
Cattaneo--Vernotte law
instead of the usual Fourier law (see \cite{Ca58, Li97, Ve58}).
The term $b(t)u_t$ is called the damping term,
which prevents the motion of the wave and reduces its energy,
and the coefficient $b(t)$ represents
the strength of the damping.

From a mathematical point of view,
it is an interesting problem to study
how the damping term affects the properties of the solution.
In particular, in this paper
we investigate
the relation between the damping term and
the blow-up behavior of the solution of the Cauchy problem \eqref{eq:1}.
To this end, as a typical case, we assume that
$b(t)$ satisfies
\begin{align}%
\label{b}
	b_1 (1+t)^{-\beta} \le b(t) \le b_2 (1+t)^{-\beta},\quad
	|b^{\prime}(t)| \le b_3 (1+t)^{-1}b(t)
\end{align}%
for $t \ge 0$
with some
$\beta \in \mathbb{R}$
and some positive constants
$b_1, b_2$ and $b_3$.

Since the the nonlinearity $|u|^p$ of \eqref{eq:1}
is a source term,
in general the solution may blow up in finite time
even if the initial data is sufficiently small.
Indeed, for the semilinear heat equation
$v_t - \Delta v = v^p$
with a nonnegative initial data
$v(0) = v_0 \ge 0$,
Fujita \cite{Fu66} found
that there is the critical exponent
$p_F = 1+2/n$,
that is,
if $p>p_F$, then the global solution uniquely exists
for suitably small initial data comparing with the Gaussian;
if $1<p< p_F$, then all positive solutions blow up in finite time.
Later on, it is shown that the critical case $p=p_F$
belongs to the blow-up region
(see Hayakawa \cite{Ha73} and Kobayashi, Sino and Tanaka \cite{KoSiTa77}).

The blow-up of solutions of semilinear damped wave equations
was firstly studied by
Li and Zhou \cite{LiZh95}.
They treated the so-called classical damping case,
that is, \eqref{eq:1} with $b(t) \equiv 1$,
and proved that when $n=1$ or $n=2$ and
$p\le p_F$,
if the initial data satisfy
$u_0,u_1\in C_0^{\infty}(\mathbb{R}^n)$
and
$\int_{\mathbb{R}^n} (u_0+u_1)(x)\,dx > 0$,
then the local solution blows up within a finite time.
Moreover, they obtained the sharp upper bound of the lifespan
in terms of the size of the initial data.
Namely, denoting
$u_0 = \varepsilon a_0, u_1 = \varepsilon a_1$
with
$\varepsilon > 0$ and
$a_0, a_1\in C_0^{\infty}(\mathbb{R}^n)$ having positive average,
they proved that the lifespan
(maximal existence time of the local solution)
$T_0$
satisfies
\begin{align}
\label{ls}
	T_0
	\le \begin{cases}
		C\varepsilon^{-\frac{1}{\frac{1}{p-1}-\frac{n}{2}}}&(1<p<p_F),\\
		\exp\left( C \varepsilon^{-(p-1)} \right)&(p=p_F).
		\end{cases}
\end{align}
Furthermore, by their method, we can prove
the estimate
\begin{align*}%
	I(t) \ge C \left( C I(0) -t \right)^{-\frac{2}{p-1}}
	\quad \mbox{with}\quad
	I(t) = \int_{\mathbb{R}^n} u(t,x) dx.
\end{align*}%
This shows the blow-up rate of the average of the solution.
However, their argument depends on the positivity of the
fundamental solution of the damped wave equation,
which is valid only in $n \le 3$,
and it cannot be applied to higher dimensional cases
or variable coefficient cases.
They also proved the global existence of solutions
for small initial data when $p>p_F$.
Therefore, they determined the critical exponent
for the classical damped wave equation for $n \le 2$
for smooth and compactly supported initial data.
Here, we say that
a number $p_c >1$ is the critical exponent for the
semilinear damped wave equation \eqref{eq:1}
if $p>p_c$, then the global solution uniquely exists for
sufficiently small data;
if $p\le p_c$, then the local solution blows up in finite time,
provided that the data has certain positive average determined from $b(t)$.

Later on,
for $n=3$,
Nishihara \cite{Ni03, Ni03Ib}
discovered a decomposition of the linear solution
\[
	S_n(t) u_1(x) = J_n(t) u_1(x) + e^{-\frac{t}{2}} W_n(t) u_1(x),
\]
where
$S_n(t), W_n(t)$ are the fundamental solution of the linear damped wave equation
$\Box u + u_t = 0$
and the linear wave equation
$\Box u = 0$, respectively,
and $J_n(t)u_1$ behaves as the solution of the linear heat equation
$v_t - \Delta v = 0$.
Then, he proved
the small data global existence when $p>p_F$
and
the sharp upper bound of the lifespan \eqref{ls}
when $p \le p_F$.
For $n=1,2$ and $n \ge 4$,
the same type decomposition was obtained by
Marcati and Nishihara \cite{MaNi03}, Hosono and Ogawa \cite{HoOg04}
and Narazaki \cite{Na04}
(see also Sakata and the third author \cite{SaWaMZ} for the exact decomposition for
$n \ge 4$).

For higher dimensional cases
$n \ge 4$,
Todorova and Yordanov \cite{ToYo01}
and Zhang \cite{Zh01}
determined the critical exponent as
$p = p_F$.

Concerning the estimate of the lifespan for $n \ge 4$,
for the subcritical case
$p<p_F$,
the second and the third author \cite{IkeWa15}
showed an almost sharp estimate of the lifespan
\begin{align*}
	C_1 \varepsilon^{-\frac{1}{\frac{1}{p-1}-\frac{n}{2}}+\delta}
	\le T_0
	\le C_2  \varepsilon^{-\frac{1}{\frac{1}{p-1}-\frac{n}{2}}}
\end{align*}
for small $\varepsilon > 0$
with arbitrary small $\delta>0$ and
some constants $C_1, C_2 >0$.
For the critical case
$p=p_F$,
the second author and Ogawa \cite{IkeOg16} obtained
\begin{align*}
	\exp\left( C_1 \varepsilon^{-(p-1)} \right)
	\le T_0
	\le \exp \left( C_2 \varepsilon^{-p} \right)
\end{align*}
with some constant $C_1, C_2 >0$
(see Proposition \ref{prop_IkOg} below).
As in the case $n \le 3$,
we expect that the sharp upper estimate of the lifespan
is given by
$T_0 \le \exp ( C \varepsilon^{-(p-1)} )$
for higher dimensional cases $n\ge 4$.
However, this problem is still open.

In regard to the lifespan estimate for the
semilinear wave equation with time-dependent damping
\eqref{eq:1},
much less is known.
When $b(t) = (1+t)^{-\beta}$
with $\beta \in (-1, 1)$
and $(u_0, u_1) \in H^1(\mathbb{R}^n) \times L^2(\mathbb{R}^n)$
is compactly supported,
Nishihara \cite{Ni11TJM}
and Lin, Nishihara and Zhai \cite{LiNiZh12}
proved that the critical exponent
is given by $p=p_F$
(see also D'Abbicco, Lucente and Reissig \cite{DaLuRe13}
for more general damping and initial data).
After that,
for subcritical cases
$p<p_F$,
the second author and the third author \cite{IkeWa15}
obtained an almost sharp estimate of the lifespan
\begin{align*}
	C_1 \varepsilon^{-\frac{1}{(\frac{1}{p-1}-\frac{n}{2})(1+\beta)}+\delta}
	\le T_0
	\le C_2  \varepsilon^{-\frac{1}{(\frac{1}{p-1}-\frac{n}{2})(1+\beta)}}
\end{align*}
with arbitrary small $\delta>0$ and
some constants $C_1, C_2 >0$.

For the case where $b(t) = b_0/(1+t)$ with $b_0 >0$, 
the linearized problem of \eqref{eq:1}
\begin{align*}
	\Box u + \frac{b_0}{1+t} u_t = 0
\end{align*}
has scaling invariance, and
it is known that the asymptotic behavior
of the solution depends on the value of the constant $b_0 > 0$
(see Wirth \cite{Wi04}).
For the semilinear problem
\begin{align}
\label{mu}
	\Box u + \frac{b_0}{1+t} u_t = |u|^p,
\end{align}
D'Abbicco and Lucente \cite{DaLu13} and
D'Abbicco \cite{Da15}
determined the critical exponent
as $p=p_F$ when
$b_0 \ge 5/3 \ (n=1)$, $b_0 \ge 3 \ (n=2)$ and $b_0 \ge n+2 \ (n\ge 3)$.
Moreover, in the special case $b_0 = 2$,
by setting $u = (1+t)w$,
the equation \eqref{mu} is transformed into
the semilinear wave equation
$\Box w = (1+t)^{-(p-1)}|w|^p$.
In view of this,
D'Abbicco, Lucente and Reissig \cite{DaLuRe15}
showed that the critical exponent is given by
$p_2(n) = \max\{ p_F, p_0(n+2) \}$ for $n\le 3$,
where
$p_0(m)$ is the positive root of
$(m-1)p^2 - (m+1)p -2 = 0$.
These results were recently extended to
scale-invariant mass and dissipation
by Nasciment, Palmieri and Reissig \cite{NasPaRe16}.
Wakasa \cite{Wak16} obtained the optimal
estimate of the lifespan of solutions to \eqref{mu} with $b_0 =2$ for $n=1$:
\begin{align*}
	T_0 \sim \begin{cases}
		C \varepsilon^{-\frac{p-1}{3-p}} & (p<3),\\
		\exp \left( C \varepsilon^{-(p-1)} \right) &(p=3).
		\end{cases}
\end{align*}
However, in the general case $b_0 \neq 2$,
the optimal lifespan estimate is not known,
while partial results were given in \cite{Wa_thesis}.

When $\beta = -1$,
the third author \cite{Wa17JMAA} recently studied the global existence
and asymptotic behavior for $p>p_F$.
However, there are no results about
blow-up and estimates of the lifespan
for $p \le p_F$.

Finally, for $\beta < -1$,
we expect that
for any $p>1$,
the global solution uniquely exists
for sufficiently small initial data.
This problem will be discussed elsewhere.
When $\beta >1$,
we expect that the critical exponent
is given by $p_0(n)$,
that is, the critical exponent coincides with that of
the semilinear wave equation without damping.
However, this is still an open problem.

In this paper, we give the sharp upper estimate of lifespan
for subcritical nonlinearities
$p<p_F$
and the effective damping
$\beta \in [-1, 1)$.
The case $\beta = -1$ is completely new.
We also prove the sharp lower estimate the lifespan
when $p=p_F$ and $\beta \in [-1,1)$.
For the case $\beta = 1$,
some upper estimates of the lifespan will be given,
while it seems not to be optimal in general.

To state our main results,
we first introduce the definition of strong solutions:

\begin{Definition}\label{def_sol}
Let
$(u_0, u_1) \in H^1(\mathbb{R}^n) \times L^2(\mathbb{R}^n)$
and let
$T \in (0,\infty]$.
A function
\[
	u \in C^2([0,T); H^{-1}(\mathbb R^n))
	\cap C^1([0,T); L^2 (\mathbb R^n)) \cap C([0,T); H^1 (\mathbb R^n))
\]
is called a strong solution of the Cauchy problem \eqref{eq:1}
on $[0, T)$
if $u$ satisfies the initial conditions
$u(0) = u_0, u_t(0) = u_1$
and the first equation of \eqref{eq:1} in $C^2([0,T) ; H^{-1}(\mathbb{R}^n))$.
\end{Definition}
When $1<p <\infty\ (n=1,2)$, $1<p \le n/(n-2)\ (n\ge 3)$,
for any $(u_0, u_1)\in H^1(\mathbb{R}^n) \times L^2(\mathbb{R}^n)$,
it is well-known that
there exist $T>0$ and
a unique strong solution $u$ of the Cauchy problem
\eqref{eq:1} on $[0,T)$
(see \cite{Str89} or \cite{NakOn93}).
We will show the existence of the strong solution
for some $T>0$ in the appendix for the reader's convenience.

\begin{Definition}\label{def_ls}
The lifespan $T_0$ of a solution $u$ for the Cauchy problem \eqref{eq:1}
is defined by
	\[
	T_0 = \sup \{T > 0 \mid u \mbox{ is a strong solution for \eqref{eq:1} on } [0,T) \}.
	\]
\end{Definition}

To state our main results,
we introduce assumptions and notations.
We recall $p_F = 1 + 2/n$.
In what follows, we assume that
the coefficient of the damping term $b(t)$ is a smooth function
satisfying \eqref{b} with some $\beta \in [-1,1]$.
We put
\[
	B(t) = \int_0^{t} b(\tau)^{-1}\,d\tau.
\]
Then, by the assumption \eqref{b}, $B(t)$ satisfies
\begin{align}%
\label{b_est}
	\begin{cases}
		B_1 (1+t)^{1+\beta} \le B(t) \le B_2 (1+t)^{1+\beta}&(\beta \in (-1,1]),\\
		B_1 \log (2+t) \le B(t) \le B_2 \log(2+t) &(\beta =-1)
	\end{cases}
\end{align}%
for $t \ge 1$ with some constants $B_1, B_2 >0$
(the second inequalities of each case are still valid for any $t \ge 0$).
Note that the function $B(t)$ is strictly increasing
due to the positivity of $b(t)$,
and hence, $B(t)$ has the inverse function $B^{-1}(\tau)$ satisfying
\begin{align}%
\label{b_inv}
	\begin{cases}
		B_3 (1+\tau)^{\frac{1}{1+\beta}}
			\le B^{-1}(\tau)
			\le B_4 (1+\tau)^{\frac{1}{1+\beta}} &( \beta \in (-1,1]),\\
		\exp \left( B_3 (1+\tau) \right)
			\le B^{-1}(\tau)
			\le \exp \left( B_4 (1+\tau) \right) &(\beta =-1)
	\end{cases}
\end{align}%
for $\tau \ge 1$ with some constants $B_3, B_4>0$
(the second inequalities of each case are still valid for any $\tau \ge 0$).
We also remark that the changing variable $s = B(t)$
transforms the corresponding parabolic problem
$b(t)v_t - \Delta v = |v|^p$
into
$\tilde{v}_s - \Delta \tilde{v} = |\tilde{v}|^p$
with $v(t,x) = \tilde{v}(B(t),x)$.
Therefore, the function $B(t)$ acts as a scaling function
for the time variable.

Next, we define $\widetilde \psi  \in C^\infty([0,\infty);[0,1])$ as
	\[
	\widetilde \psi (r) =
	\begin{cases}
	1 &\mbox{if}\quad r \leq 1,\\
	\searrow &\mbox{if}\quad 1 < r < 2,\\
	0 &\mbox{if}\quad r \geq 2.
	\end{cases}
	\]
Let $\psi : \mathbb R^n \ni x \mapsto \widetilde \psi (|x|)$
and let $\psi _R(x) = \psi (x/R)$.
For $p>1$, $\beta \in [-1,1]$ and $A>0$,
we define $\mu (p,b,\beta ,A)$ by
	\begin{align*}
	&\mu (p,b,\beta ,A) \\
	&= \min \left\{ 1, \frac{p-1}{2} b(0)A,
	\left[
	\frac{2(p+1)}{(p-1)^2}
	b_1^{-2} \left\{ 2^{\frac{1}{1+\beta}} (1+B_4) \right\}^{\max(0, 2 \beta )}
	+ \frac{2 (b_1^{-1}b_3+1)}{p-1} \right]^{-1}
	\right\}.
	\end{align*}
Here, we interpret the term
$\{ 2^{\frac{1}{1+\beta}} (1+B_4) \}^{\max(0, 2 \beta )} = 1$
if $\beta = -1$.
We also note that $A_1 \le A_2$ implies
$\mu(p,b,\beta,A_1) \le \mu(p,b,\beta,A_2)$.
The constant
$\mu(p,b,\beta,A)$
appears as the coefficient of the initial data
in the estimate of the lifespan
and the lower estimate of the average of the solution
(see Proposition \ref{Theorem:1.3}).
For $n \in \mathbb{N}$, $p>1$, $\ell \in \mathbb{N}$ satisfying $\ell > 2p'$,
and $\phi \in C_0^{\infty}(\mathbb{R}^n)$ with $\phi \ge 0$,
we also define $A(n,p,\ell,\phi )$ as
	\[
	A(n,p,\ell,\phi )
	= 2^{p'-1} p'^{-\frac{1}{p}} p^{\frac{1-p'}{p}}
	\| \Phi^{p'} \phi ^{\ell-2p'} \|_{L^{1}(\mathbb R^n)}^{\frac{1}{p}}
	\| \phi ^{\ell} \|_{L^1(\mathbb R^n)}^{\frac{1}{p'}},
	\]
where
$p' = p/(p-1)$ and
	\[
	\Phi
	= \phi ^{2-\ell} \Delta(\phi^\ell)
	= \ell(\ell-1) \nabla \phi \cdot \nabla \phi + \ell \phi  \Delta \phi .
	\]
We will derive an ordinary differential inequality
for the weighted average of the solution up to the constant $A(n,p,\ell,\phi)$.

Now we are in a position to give our main results.
The first one is the upper estimate of the lifespan of solutions to \eqref{eq:1}
in a general setting.
At the moment we do not need any condition on $p$ such as
$p \le p_F$ but we impose certain condition
with respect to the test function $\phi$.
\begin{Proposition}
\label{Theorem:1.3}
Let $\beta  \in [-1,1]$, $p \in (1, \infty)$
and $(u_0,u_1) \in H^1(\mathbb R^n) \times L^2(\mathbb R^n)$,
and let $u$ be the associated strong solution on
$[0,T_0)$ with the lifespan $T_0$.
Assume that there exists $\phi \in \mathcal S (\mathbb R^n;[0,\infty))$
such that
	\begin{align}
	0
	< I_\phi (0) - A(n,p,\ell,\phi )
	< 2^{\frac{1}{p-1}} \|\phi ^\ell\|_{L^1(\mathbb R^n)},
	\quad
	I_\phi '(0)
	> 0,
	\label{eq:2}
	\end{align}
where
	\[
	I_\phi (t)
	= \int_{\mathbb R^n} u (t,x) \phi ^\ell(x) dx.
	\]
Let
	\begin{align*}
	J_\phi (t)
	&= I_\phi (t) - A(n,p,\ell,\phi ),\\
	\widetilde J_\phi (0)
	&= 2^{-\frac{1}{p-1}} \|\phi ^\ell\|_{L^1(\mathbb R^n)}^{-1} J_\phi(0),\\
	A_1
	&= \frac{J_\phi '(0)}{J_\phi (0)}
	= \frac{I_\phi '(0)}{I_\phi (0) - A(n,p,\ell,\phi )}.
	\end{align*}
Then, we have
	\begin{align}
	J_\phi (t)
	\geq
	J_\phi (0)
	\bigg( 1 - \mu (p,b,\beta ,A_1)
	\widetilde J_\phi (0)^{p-1}
	B(t)
	\bigg)^{- \frac{2}{p-1}}
	\label{eq:rate}
	\end{align}
for $t \in [0,T_0)$.
Moreover,
the lifespan $T_0$ of the solution $u$ is estimated as
	\[
	T_0 \leq
	B^{-1} \left( \mu (p,b,\beta, A_1)^{-1} \widetilde J_\phi(0)^{1-p} \right).
	\]
\end{Proposition}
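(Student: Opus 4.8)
The plan is to convert the problem into an ordinary differential inequality for the weighted average $I_\phi(t)$, and then to compare $J_\phi$ with an explicit self-similar-type blow-up solution of the model ODE $w''+b(t)w'=c\,w^p$.

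\smallskip
\textbf{Step 1 (ODE for the average).} First I pair the equation \eqref{eq:1} with $\phi^\ell$. Since $\phi\in\mathcal S(\mathbb{R}^n)$ and $u$ is a strong solution, $I_\phi\in C^2([0,T_0))$, and integrating by parts in $\Delta u$ gives
\[
 I_\phi''(t)+b(t)\,I_\phi'(t)=\int_{\mathbb{R}^n}|u(t,x)|^p\phi^\ell(x)\,dx+\int_{\mathbb{R}^n}u(t,x)\,\Delta(\phi^\ell)(x)\,dx ,
\]
where $\Delta(\phi^\ell)=\phi^{\ell-2}\Phi$ with $\Phi$ as in the statement.

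\smallskip
\textbf{Step 2 (the differential inequality for $J_\phi$).} Set $F(t)=\int_{\mathbb{R}^n}|u|^p\phi^\ell\,dx\ge0$ and $M=\|\phi^\ell\|_{L^1(\mathbb{R}^n)}$. As long as $I_\phi(t)\ge0$, Jensen's inequality gives $F(t)\ge M^{-(p-1)}I_\phi(t)^p$, while Hölder's inequality, together with the hypothesis $\ell>2p'$ that keeps the relevant exponent of $\phi$ nonnegative, gives $\bigl|\int_{\mathbb{R}^n}u\,\Delta(\phi^\ell)\,dx\bigr|\le\|\Phi^{p'}\phi^{\ell-2p'}\|_{L^1(\mathbb{R}^n)}^{1/p'}F(t)^{1/p}$. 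Young's inequality with the optimal weight absorbs exactly half of $F$ in the latter term and leaves a constant equal, after a short computation, to $A(n,p,\ell,\phi)^p/(2M^{p-1})$; this is precisely how $A(n,p,\ell,\phi)$ is forced. Using also the superadditivity $I_\phi^p-A^p\ge(I_\phi-A)^p$ valid for $I_\phi\ge A\ge0$, I obtain, whenever $J_\phi(t)\ge0$,
\[
 J_\phi''(t)+b(t)\,J_\phi'(t)\ \ge\ c\,J_\phi(t)^p ,\qquad c:=\tfrac12 M^{-(p-1)} ,
\]
and one notes $c\,J_\phi(0)^{p-1}=\widetilde J_\phi(0)^{p-1}$, while \eqref{eq:2} yields $0<\widetilde J_\phi(0)<1$, $J_\phi(0)>0$, $J_\phi'(0)>0$.

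\smallskip
\textbf{Step 3 (positivity, comparison, lifespan).} Multiplying the inequality above by $e^{\int_0^t b(s)\,ds}$ shows that $e^{\int_0^t b(s)\,ds}J_\phi'$ is nondecreasing as long as $J_\phi\ge0$, so $J_\phi'>0$ and $J_\phi\ge J_\phi(0)>0$; a continuity argument propagates this to all of $[0,T_0)$, where the inequality therefore holds. Next, with $\lambda:=\mu(p,b,\beta,A_1)\widetilde J_\phi(0)^{p-1}=\mu(p,b,\beta,A_1)\,c\,J_\phi(0)^{p-1}$, I compare $J_\phi$ with $y(t):=J_\phi(0)(1-\lambda B(t))^{-2/(p-1)}$ on $[0,t^*)$, $t^*:=B^{-1}(\lambda^{-1})=B^{-1}\!\bigl(\mu(p,b,\beta,A_1)^{-1}\widetilde J_\phi(0)^{1-p}\bigr)$. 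Using $B'=b^{-1}$, the subsolution inequality $y''+by'\le c\,y^p$ reduces, after dividing by $(1-\lambda B)^{-2p/(p-1)}$ and using $1-\lambda B\le1$, to
\[
 \frac{2\mu(p,b,\beta,A_1)}{p-1}\Bigl(1+\frac{|b'|}{b^2}+\frac{(p+1)\lambda}{(p-1)b^2}\Bigr)\ \le\ 1 ;
\]
here \eqref{b} gives $|b'|/b^2\le b_1^{-1}b_3(1+t)^{\beta-1}\le b_1^{-1}b_3$ and $b^{-2}\le b_1^{-2}(1+t)^{2\beta}$, and \eqref{b_inv} bounds $1+t$ on $[0,t^*)$ by a multiple of $(2/\lambda)^{1/(1+\beta)}$, so that (using $\beta\le1$ and $\widetilde J_\phi(0)<1$) $\lambda b^{-2}\le b_1^{-2}\{2^{1/(1+\beta)}(1+B_4)\}^{\max(0,2\beta)}$ on that interval; with these two bounds the displayed inequality holds by the very definition of $\mu(p,b,\beta,A_1)$. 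One also checks $y(0)=J_\phi(0)$ and $y'(0)=\tfrac{2\lambda}{(p-1)b(0)}J_\phi(0)<A_1 J_\phi(0)=J_\phi'(0)$, strictly, since $\lambda=\mu(p,b,\beta,A_1)\widetilde J_\phi(0)^{p-1}<\mu(p,b,\beta,A_1)\le\tfrac{p-1}{2}b(0)A_1$ by $\widetilde J_\phi(0)<1$. Hence $w:=J_\phi-y$ satisfies $w(0)=0$, $w'(0)>0$, and on every interval on which $J_\phi\ge y$ (so $J_\phi^p-y^p\ge0$) the integrating-factor argument of the first part of this step gives $e^{\int_0^t b(s)\,ds}w'\ge w'(0)>0$, so $w$ is strictly increasing and positive; a continuity argument then yields $J_\phi\ge y$ on $[0,\min(T_0,t^*))$, which is \eqref{eq:rate}. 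Finally, if $T_0>t^*$ then \eqref{eq:rate} forces $J_\phi(t)\to+\infty$ as $t\uparrow t^*$, contradicting continuity of $J_\phi$ on $[0,T_0)$; hence $T_0\le t^*$, the asserted lifespan bound, and \eqref{eq:rate} then holds on all of $[0,T_0)$.

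\smallskip
\textbf{The main difficulty.} The crux is the subsolution verification in Step 3: establishing $y''+by'\le c\,y^p$ with the sharp constant requires controlling not only $b$ and $b'$ but also the \emph{length} of the interval $[0,t^*)$ on which the comparison runs — this is exactly where the effectiveness of the damping ($\beta\le1$ in \eqref{b}) and the normalisation $\widetilde J_\phi(0)<1$ coming from \eqref{eq:2} enter — and the careful tracking of the powers of $\mu(p,b,\beta,A_1)$, $\widetilde J_\phi(0)$, $\|\phi^\ell\|_{L^1}$ and $\|\Phi^{p'}\phi^{\ell-2p'}\|_{L^1}$ in that estimate is precisely what dictates the shape of $\mu(p,b,\beta,A)$. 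The remaining steps — the integration by parts of Step 1 in the $H^{-1}$ framework, the Hölder/Young manipulations of Step 2, and the continuity arguments for positivity and for the comparison — are routine.
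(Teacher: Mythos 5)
Your proposal is correct and follows essentially the same route as the paper: the same H\"older/Young/Jensen derivation of the differential inequality $J_\phi''+bJ_\phi'\ge 2^{-1}\|\phi^\ell\|_{L^1}^{1-p}J_\phi^p$ with the same constant $A(n,p,\ell,\phi)$, and comparison with the same explicit subsolution $J_\phi(0)(1-\mu\widetilde J_\phi(0)^{p-1}B(t))^{-2/(p-1)}$, whose verification reproduces the paper's Lemma \ref{Theorem:2.2}. The only (harmless) deviations are that you run the comparison directly via the integrating factor $e^{\int_0^t b}$ using $w(0)=0$, $w'(0)>0$, rather than invoking the Li--Zhou comparison lemma together with the $\rho$-perturbation and limit $\rho\to0$ used in Proposition \ref{Theorem:2.3}, and that you prove persistence of $J_\phi>0$ by monotonicity rather than by contradiction with the lower bound.
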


Proposition \ref{Theorem:1.3} implies that
\eqref{eq:2} is a sufficient condition for the blow-up of the solution.
The condition \eqref{eq:2} is related with the scaling of the equation.
Indeed, letting $p \in (1,p_F)$ and taking the test function
$\phi = \psi_{R(\varepsilon)}$
with an appropriate scaling parameter $R(\varepsilon)$,
we ensure the condition \eqref{eq:2} and
show the sharp estimate of the lifespan of solutions to \eqref{eq:1}.
\begin{Corollary}
\label{Theorem:1.4}
Let $\beta  \in [-1,1]$, $p \in (1,p_F)$
and $(u_0, u_1) = \varepsilon (a_0, a_1)$
with $\varepsilon > 0$.
We assume that
$(a_0, a_1)
\in (H^1(\mathbb R^n)\cap L^1(\mathbb{R}^n))
\times (L^2(\mathbb R^n) \cap L^1(\mathbb{R}^n))$
satisfy
	\[
		I_0 = \int_{|x|<R} a_0(x) dx > 0,
	\quad
		I_1 = \int_{|x|<R} a_1(x) dx > 0.
	\]
Let
	\begin{align}
	\label{r}
	R(\varepsilon)
	= A(n,p,\ell,\psi )^{\frac{p-1}{n(p_F-p)}}
	\bigg(\frac{\varepsilon}{4} I_0 \bigg)^{ - \frac{p-1}{n(p_F-p)}}
	\end{align}
and let $\varepsilon_0 >0$ satisfy that
\begin{align}
	&\int_{\mathbb R^n} \psi _{R(\varepsilon_0)}^{\ell}(x) a_0 (x)
	\ge \frac{1}{2} I_0,
	\label{eq:3}\\
	&\int_{\mathbb R^n} \psi _{R(\varepsilon_0)}^\ell(x) a_1(x) dx
	\geq \frac{1}{2} I_1,
	\label{eq:4}\\
	&\varepsilon_0 I_0
	\le 2^{\frac{1}{p-1}} \| \psi^{\ell} \|_{L^1(\mathbb{R}^n)} R(\varepsilon_0)^n.
	\label{eq:41}
\end{align}
Then, for any $\varepsilon \in (0,\varepsilon_0]$,
the associated strong solution $u$ of \eqref{eq:1} satisfies,
\begin{align}%
\label{bl-rt}
	\int_{\mathbb{R}^n} u(t,x) \psi_{R(\varepsilon)}^{\ell}(x) dx
	\ge \frac{\varepsilon}{4}I_0
		\left( 1- \mu_0
			\varepsilon^{\frac{1}{\frac{1}{p-1}-\frac{n}{2}}} B(t)
		\right)^{-\frac{2}{p-1}}
\end{align}%
with some constant $\mu_0 = \mu_0(n,p,b,\beta,\ell,\psi,I_0,I_1) > 0$
and the lifespan $T_0 = T_0(\varepsilon)$
is estimated as
	\begin{align}
	\label{lf_upp2}
	T_0 \le
	B^{-1} \left( \mu_0^{-1} \varepsilon^{-\frac{1}{\frac{1}{p-1}-\frac{n}{2}}} \right).
	\end{align}
\end{Corollary}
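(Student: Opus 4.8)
\emph{Proof proposal.}
The plan is to derive Corollary~\ref{Theorem:1.4} from Proposition~\ref{Theorem:1.3} by inserting the rescaled test function $\phi = \psi_{R(\varepsilon)}$ and tracking the dependence of every quantity on the parameter $R(\varepsilon)$. The first step is a scaling computation. From $\psi_R(x) = \psi(x/R)$ the change of variables $y=x/R$ gives $\|\psi_R^\ell\|_{L^1(\mathbb R^n)} = R^n \|\psi^\ell\|_{L^1(\mathbb R^n)}$, while the chain rule gives $\Delta(\psi_R^\ell)(x) = R^{-2}(\Delta\psi^\ell)(x/R)$ and hence $\Phi_{\psi_R}(x) = R^{-2}\Phi_\psi(x/R)$, where $\Phi_\psi$ denotes the function $\Phi$ built from $\psi$. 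Substituting these into the definition of $A(n,p,\ell,\cdot)$ and recalling $p'/p = 1/(p-1)$ yields the homogeneity identity
\begin{align*}
A(n,p,\ell,\psi_R) = A(n,p,\ell,\psi)\, R^{\,n-\frac{2}{p-1}}.
\end{align*}
The key algebraic observation is that, since $p_F = 1+2/n$, one has $n - \frac{2}{p-1} = -\frac{n(p_F-p)}{p-1}$; therefore the particular choice \eqref{r} of $R(\varepsilon)$ is precisely the one normalizing
\begin{align*}
A(n,p,\ell,\psi_{R(\varepsilon)}) = \frac{\varepsilon}{4} I_0.
\end{align*}

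Next I would check that hypothesis \eqref{eq:2} of Proposition~\ref{Theorem:1.3} holds for $\phi = \psi_{R(\varepsilon)}$ and every $\varepsilon \in (0,\varepsilon_0]$. Since $p < p_F$, the exponent $-\frac{p-1}{n(p_F-p)}$ of $\varepsilon$ in \eqref{r} is negative, so $\varepsilon \mapsto R(\varepsilon)$ is decreasing and $R(\varepsilon) \ge R(\varepsilon_0)$; using this monotonicity together with $a_0, a_1 \in L^1(\mathbb R^n)$, the conditions \eqref{eq:3}, \eqref{eq:4}, \eqref{eq:41} imposed at $\varepsilon_0$ continue to hold for all smaller $\varepsilon$. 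Then from \eqref{eq:3} and the identity of Step~1,
\begin{align*}
J_{\psi_{R(\varepsilon)}}(0) = \varepsilon \int_{\mathbb R^n} a_0 \psi_{R(\varepsilon)}^\ell\, dx - \frac{\varepsilon}{4} I_0 \ge \frac{\varepsilon}{4} I_0 > 0,
\end{align*}
while $J_{\psi_{R(\varepsilon)}}(0) \le I_{\psi_{R(\varepsilon)}}(0) \le \varepsilon \|a_0\|_{L^1(\mathbb R^n)} < 2^{\frac{1}{p-1}} \|\psi^\ell\|_{L^1(\mathbb R^n)} R(\varepsilon)^n = 2^{\frac{1}{p-1}} \|\psi_{R(\varepsilon)}^\ell\|_{L^1(\mathbb R^n)}$ by \eqref{eq:41}, and $I_{\psi_{R(\varepsilon)}}'(0) = \varepsilon \int_{\mathbb R^n} a_1 \psi_{R(\varepsilon)}^\ell\, dx \ge \frac{\varepsilon}{2} I_1 > 0$ by \eqref{eq:4}. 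Thus \eqref{eq:2} holds and Proposition~\ref{Theorem:1.3} applies with this $\phi$.

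Finally I would convert the conclusion of Proposition~\ref{Theorem:1.3} into \eqref{bl-rt} and \eqref{lf_upp2}, which is pure bookkeeping. Because $I_{\psi_{R(\varepsilon)}}(t) = J_{\psi_{R(\varepsilon)}}(t) + \frac{\varepsilon}{4} I_0 \ge J_{\psi_{R(\varepsilon)}}(t)$, estimate \eqref{eq:rate} together with $J_{\psi_{R(\varepsilon)}}(0) \ge \frac{\varepsilon}{4} I_0$ yields \eqref{bl-rt}, provided one produces a constant $\mu_0>0$ independent of $\varepsilon$ with $\mu(p,b,\beta,A_1)\, \widetilde J_{\psi_{R(\varepsilon)}}(0)^{p-1} \ge \mu_0\, \varepsilon^{1/(\frac{1}{p-1}-\frac{n}{2})}$. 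For the second factor, $\widetilde J_{\psi_{R(\varepsilon)}}(0) = 2^{-\frac{1}{p-1}} (R(\varepsilon)^n \|\psi^\ell\|_{L^1(\mathbb R^n)})^{-1} J_{\psi_{R(\varepsilon)}}(0)$ together with $\frac{\varepsilon}{4} I_0 \le J_{\psi_{R(\varepsilon)}}(0) \le \varepsilon \|a_0\|_{L^1(\mathbb R^n)}$ and $R(\varepsilon)^n \sim \varepsilon^{-(p-1)/(p_F-p)}$ shows, by elementary exponent arithmetic using $p_F - 1 = 2/n$, that $\widetilde J_{\psi_{R(\varepsilon)}}(0)^{p-1}$ is comparable (up to constants depending only on $n,p,\ell,\psi,I_0,\|a_0\|_{L^1(\mathbb R^n)}$) to $\varepsilon^{1/(\frac{1}{p-1}-\frac{n}{2})}$. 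For the first factor, $I_{\psi_{R(\varepsilon)}}'(0) \ge \frac{\varepsilon}{2} I_1$ and $J_{\psi_{R(\varepsilon)}}(0) \le \varepsilon \|a_0\|_{L^1(\mathbb R^n)}$ give
\begin{align*}
A_1 = \frac{I_{\psi_{R(\varepsilon)}}'(0)}{J_{\psi_{R(\varepsilon)}}(0)} \ge \frac{I_1}{2\|a_0\|_{L^1(\mathbb R^n)}} > 0,
\end{align*}
so by the monotonicity $A_1 \le A_2 \Rightarrow \mu(p,b,\beta,A_1) \le \mu(p,b,\beta,A_2)$ recorded after the definition of $\mu$, the quantity $\mu(p,b,\beta,A_1)$ is bounded below by a positive constant depending only on $n,p,b,\beta,I_1,\|a_0\|_{L^1(\mathbb R^n)}$. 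Combining these two bounds defines $\mu_0$; then \eqref{bl-rt} follows from \eqref{eq:rate} since $s \mapsto (1-s)^{-2/(p-1)}$ is increasing on $(-\infty,1)$, and \eqref{lf_upp2} follows by inserting the lower bounds for $\mu(p,b,\beta,A_1)$ and $\widetilde J_{\psi_{R(\varepsilon)}}(0)^{p-1}$ into the estimate $T_0 \le B^{-1}(\mu(p,b,\beta,A_1)^{-1} \widetilde J_{\psi_{R(\varepsilon)}}(0)^{1-p})$ of Proposition~\ref{Theorem:1.3} and using that $B^{-1}$ is increasing. The only genuinely delicate point is the scaling step: getting the homogeneity exponent $n-\frac{2}{p-1}$ right and recognizing that \eqref{r} is exactly the choice normalizing $A(n,p,\ell,\psi_{R(\varepsilon)})$ to $\frac{\varepsilon}{4} I_0$; everything afterwards is elementary.
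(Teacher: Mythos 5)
Your argument is correct and follows essentially the same route as the paper's proof: the homogeneity identity $A(n,p,\ell,\psi_{R}) = A(n,p,\ell,\psi)\,R^{\,n-\frac{2}{p-1}}$ showing that \eqref{r} normalizes $A(n,p,\ell,\psi_{R(\varepsilon)})=\tfrac{\varepsilon}{4}I_0$, verification of \eqref{eq:2} from \eqref{eq:3}--\eqref{eq:41}, and the same exponent bookkeeping when feeding $\widetilde J_{\psi_{R(\varepsilon)}}(0)^{p-1}$ and $A_1$ into Proposition \ref{Theorem:1.3}. The only (harmless) deviation is that you bound $J_{\psi_{R(\varepsilon)}}(0)\le \varepsilon\|a_0\|_{L^1}$ when estimating $A_1$ from below, whereas the paper uses $I_{\psi_{R(\varepsilon)}}(0)\le\varepsilon I_0$ to get $A_1\ge \tfrac{2I_1}{3I_0}$, so your $\mu_0$ picks up an extra dependence on $\|a_0\|_{L^1}$ (and your appeal to \eqref{eq:41} to close the upper bound in \eqref{eq:2} then strictly requires $\varepsilon\|a_0\|_{L^1}$, not $\varepsilon I_0$, on the left, which costs at most a further shrinking of $\varepsilon_0$) --- neither point affects the validity of \eqref{bl-rt} or \eqref{lf_upp2}.
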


\begin{Remark}
The constant $\mu_0$ is given by \eqref{mu0}.
Also, under the assumptions in Corollary \ref{Theorem:1.4},
combining \eqref{lf_upp2} and \eqref{b_inv}, we see that
\begin{align*}%
	T_0 \le
	\begin{cases}
		B_4 \left(
				1+ \mu_0^{-1} \varepsilon^{-\frac{1}{\frac{1}{p-1}-\frac{n}{2}}}
			\right)^{\frac{1}{1+\beta}}
		&(\beta \in (-1,1]),\\
		\exp \left( B_4
			\left( 1+ \mu_0^{-1} \varepsilon^{-\frac{1}{\frac{1}{p-1}-\frac{n}{2}}} \right)
			\right)
		&(\beta = -1).
	\end{cases}
\end{align*}%
\end{Remark}

Propositions \ref{Theorem:1.3} and \ref{Theorem:1.4}
show the blow-up behavior for the solutions of \eqref{eq:1}
and how it depends on the parameter $\beta$.
They are summarized in the following way:

\begin{itemize}
\item
The blow-up rate of the solution near the blow-up time
is similar to
that of the nonlinear wave equation,
though the time variable is scaled by $B(t)$.
\item
On the other hand,
the estimate of the lifespan of the solution
is similar to
that of the nonlinear heat equation.
\end{itemize}

Concerning the upper estimate of the lifespan
$T_0$
in the critical case $p=p_F$,
we refer the reader to a recent result of
the second author and Ogawa \cite[Theorem 2.5]{IkeOg16}:
\begin{Proposition}[\cite{IkeOg16}]\label{prop_IkOg}
Let $b(t) = (1+t)^{-\beta}$, $\beta \in (-1,1)$, $p=p_F$ and
let $(u_0, u_1) = \varepsilon (a_0, a_1)$
with $\varepsilon > 0$, and we assume that
$(a_0, a_1) \in H^1(\mathbb{R}^n) \times L^2(\mathbb{R}^n)$
and $(a_0, a_1)$ satisfies
\[
	B_0 a_0 + a_1 \in L^1(\mathbb{R}^n)
	\quad \mbox{and} \quad
	\int_{\mathbb{R}^n} ( B_0 a_0 + a_1 )(x)\, dx > 0,
\]
where
\[
	B_0 =
	\left( \int_0^{\infty} \exp \left( -\int_0^t (1+s)^{-\beta}\,ds \right)\,dt \right)^{-1}.
\]
Then, there exists a constant
$C = C(n, \beta, a_0, a_1) >0$
such that the lifespan $T_0 = T_0(\varepsilon)$
of the associated strong solution is estimated as
\[
	T_0 \le \exp \left( C \varepsilon^{-p} \right)
\]
for any $\varepsilon \in (0,1]$.
\end{Proposition}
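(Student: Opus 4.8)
\emph{Sketch of the proof.} The plan is to run a test-function (weak-formulation) argument adapted to the parabolic scaling of \eqref{eq:1}. Since $b(t)=(1+t)^{-\beta}$ with $\beta\in(-1,1)$ is an effective damping, the change of variables $s=B(t)\simeq(1+t)^{1+\beta}$ formally turns the associated diffusion equation $b(t)v_t-\Delta v=|v|^p$ into the classical semilinear heat equation $\tilde{v}_s-\Delta\tilde{v}=|\tilde{v}|^p$, for which $p=p_F=1+2/n$ is exactly the Fujita exponent; one therefore expects the lifespan in this critical case to be an exponential of a negative power of $\varepsilon$, and the point is to make this rigorous. Concretely, I would multiply \eqref{eq:1} by a test function $\Psi(t,x)=\rho(t)\,\psi_R(x)^{2p'}$, where $\psi_R$ is the spatial cutoff of the excerpt, the spatial scale $R$ is tied to a target time $T$ through the heat relation $R^2\simeq B(T)$, and $\rho(t)$ is a temporal weight adapted to the adjoint linear flow and truncated near $t=T$. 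With $\rho$ normalized appropriately, the contribution of the initial data to the resulting weak identity becomes proportional to $\varepsilon\int_{\mathbb R^n}(B_0a_0+a_1)(x)\,dx$, which is precisely where the constant $B_0$ and its explicit formula originate, and the hypothesis that this quantity is positive is what forces the solution to blow up.

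From the weak identity, after applying H\"older's and Jensen's inequalities to the nonlinear term, I would derive a differential inequality for a weighted average of $u$; it is convenient to work with $G(t)=\int_{\mathbb R^n}u(t,x)\,w(t,x)\,dx$, where $w$ is a Gaussian profile adapted to the heat scaling, $w(t,x)\simeq\exp(-|x|^2/(cB(t)))$, whose evolution and cutoff errors are absorbed as lower-order terms. The borderline Jensen/H\"older exponent at $p=p_F$, where $\tfrac{n}{2}(p-1)=1$, produces an inequality of Riccati type whose integrating factor is only \emph{logarithmically} divergent in $s=B(t)$; this is the structural difference from the subcritical Corollary \ref{Theorem:1.4}, where a genuine negative power of $B(t)$ is available and a one-step argument suffices. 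To extract a finite lifespan bound in this logarithmic regime one has to refine the estimate --- for instance by inserting an additional logarithmic weight into the temporal part of the test function, or by iterating the inequality over a nested sequence of time layers $[T_j,T_0)$ so that the lower bound for $G$ is improved at each generation --- and either way one is led to the quantitative estimate $\log B(T_0)\lesssim\varepsilon^{-p}$, whence $T_0\le\exp(C\varepsilon^{-p})$ by \eqref{b_inv}. The appearance of the exponent $p$, rather than the conjecturally optimal $p-1$, is the price of this method: the wave-type and commutator errors accumulated in the refinement consume one extra power of $\varepsilon^{-1}$.

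The step I expect to be the main obstacle is exactly this critical refinement together with the uniform control of the errors. One must: (i) estimate, uniformly for $\beta$ in the open interval $(-1,1)$, the discrepancy between the true linear propagator of $\Box u+b(t)u_t=0$ and the Gaussian with time scale $B(t)$, as well as the commutators generated by the spatial cutoff $\psi_R$ and the size of the wave term $\partial_t^2 u$ relative to the dominant term $b(t)u_t$ --- and it is here that the endpoints $\beta=\pm1$, at which the parabolic scaling degenerates, must be excluded; (ii) keep precise track of the constants and exponents produced by the iteration (or by the logarithmically-weighted test function), so that the final estimate is $\varepsilon^{-p}$ and not worse; and (iii) justify the integrations by parts at the level of strong solutions in the class of Definition \ref{def_sol}, which needs an approximation argument together with the finite propagation speed to handle the spatial support of $u$. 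Assembling these ingredients yields the claimed bound $T_0\le\exp(C\varepsilon^{-p})$ with some $C=C(n,\beta,a_0,a_1)>0$ for all $\varepsilon\in(0,1]$.
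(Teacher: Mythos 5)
First, note that the paper you are working from does not prove this proposition at all: it is quoted verbatim as Theorem 2.5 of \cite{IkeOg16}, so there is no in-paper argument to compare against. Judged on its own terms, your sketch does reconstruct the right general strategy of \cite{IkeOg16} --- a space-time test function $\rho(t)\psi_R(x)^{2p'}$ with the temporal weight $\rho$ taken as the bounded solution of the adjoint ODE $\rho''-(b\rho)'=0$, which is exactly what makes the data term equal to $\varepsilon\int(B_0a_0+a_1)\,dx$ and explains the formula for $B_0$ --- and you correctly identify that the loss from $\varepsilon^{-(p-1)}$ to $\varepsilon^{-p}$ is intrinsic to this method.

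The genuine gap is the quantitative step at the critical exponent, which you only gesture at (``insert a logarithmic weight, or iterate over nested time layers'') without carrying out either option; as written, nothing in the sketch actually produces the bound $\log B(T_0)\lesssim\varepsilon^{-p}$. The mechanism that works is more specific: set $Y(R)=\int_0^\infty\!\!\int_{\mathbb R^n}|u|^p\,\rho(t)\psi_R(x)^{2p'}\,dx\,dt$ with $R^2\simeq B(T)$. The borderline H\"older estimate gives, on one hand, a uniform a priori bound $Y(R)\le C$ with $C$ independent of $R$ and $\varepsilon$ (because $Y\le C\,Y^{1/p}$ at criticality), and on the other hand the weak identity localizes the error terms to an annulus in $(t,x)$, which is controlled by $\bigl(\tfrac{d}{d(\log R)}Y(R)\bigr)^{1/p}$; combined with the positivity of the data term this yields $\tfrac{d}{d(\log R)}Y(R)\ge c\,\varepsilon^{p}$. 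Integrating in $\log R$ up to $\log B(T_0)^{1/2}$ and invoking the uniform bound gives $\varepsilon^{p}\log B(T_0)\le C$, hence $T_0\le\exp(C\varepsilon^{-p})$ via \eqref{b_inv}. Without this differential inequality in the scaling parameter your argument does not close, and it is precisely here that the exponent $p$ (rather than $p-1$) is forced. Two further points in your list of obstacles are spurious and suggest a conflation of methods: the test function approach never compares the solution with the linear propagator or a Gaussian profile (that is a feature of the Fourier/diffusion-phenomenon approach, not needed here), and finite propagation speed is neither available (the data are only assumed to satisfy $B_0a_0+a_1\in L^1$, not to be compactly supported) nor needed, since the spatial cutoff $\psi_R$ already has compact support.
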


Next, we discuss the optimality of the estimate \eqref{lf_upp2}
with respect to the power of $\varepsilon$,
that is, the estimate of the lifespan from below.
Following the third author's recent work \cite{Wa17JMAA},
we have the lower estimate of the lifespan.

\begin{Proposition}\label{prop_low}
Let
$\beta \in [-1, 1)$, $p \in (1, p_F)$ and let
$(u_0, u_1) = \varepsilon (a_0, a_1)$
with $\varepsilon > 0$.
We assume that
$(a_0, a_1) \in H^{1,m}(\mathbb{R}^n) \times H^{0,m}(\mathbb{R}^n)$
with
$m = 1\ (n=1)$, $m > n/2 +1 \ (n\ge 2)$.
Then, there exist constants
$\varepsilon_1 = \varepsilon(n,\beta, p, m, \| a_0 \|_{H^{1,m}}, \| a_1 \|_{H^{0,m}} )> 0$
and
$C_{\ast} = C_{\ast} (n,\beta, p, m, \| a_0 \|_{H^{1,m}}, \| a_1 \|_{H^{0,m}} ) > 0$
such that
for any $\varepsilon \in (0,\varepsilon_1]$,
the lifespan $T_0 = T_0(\varepsilon)$
of the associated strong solution
is estimated by
\begin{align*}%
	T_0 \ge  B^{-1} \left( C_{\ast} \varepsilon^{-\frac{1}{\frac{1}{p-1}-\frac{n}{2}} } \right).
\end{align*}%
\end{Proposition}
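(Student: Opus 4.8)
The plan is to prove the lower bound on the lifespan by a scaling-variables argument adapted from the third author's work \cite{Wa17JMAA}, which treats the case $\beta=-1$ and $p>p_F$; here we need $p<p_F$ and $\beta\in[-1,1)$. First I would introduce new variables that render the linear damped wave operator essentially parabolic. Concretely, since $b(t)v_t-\Delta v=|v|^p$ becomes $\tilde v_s-\Delta\tilde v=|\tilde v|^p$ under $s=B(t)$ (as noted in the excerpt), I would pass to the variable $s=B(t)$ and then to the self-similar variables
\[
	y=\frac{x}{\sqrt{1+s}},\qquad \sigma=\log(1+s),
\]
setting $w(\sigma,y)=(1+s)^{\frac{1}{p-1}}u(t,x)$. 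In these variables the effective equation for $w$ is a perturbation of
\[
	w_\sigma = \Delta_y w + \tfrac{1}{2} y\cdot\nabla_y w + \tfrac{1}{p-1} w + |w|^p,
\]
whose linear part $L = \Delta_y + \tfrac12 y\cdot\nabla_y + \tfrac{1}{p-1}$ is self-adjoint on the Gaussian-weighted space $L^2(\mathbb R^n; e^{|y|^2/4}dy)$ and, crucially when $p<p_F$, has a \emph{positive} leading eigenvalue $\lambda_0=\tfrac{1}{p-1}-\tfrac{n}{2}>0$. The wave-type corrections (the second $\sigma$-derivative and the terms involving $b'(t)$) are controlled using \eqref{b} and \eqref{b_inv}, exactly as the weighted-energy bootstrap in \cite{IkeWa15, Wa17JMAA} shows; here the assumption $(a_0,a_1)\in H^{1,m}\times H^{0,m}$ with $m>n/2+1$ (resp. $m=1$ for $n=1$) provides the weighted regularity needed to start that bootstrap.

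The core of the argument is then an a priori energy estimate. I would fix a weight $K(\sigma,y)=\exp\!\big(\tfrac{|y|^2}{4(1+\delta)}\big)$ (or a time-dependent variant) and show that, as long as the solution exists, the weighted energy
\[
	E(\sigma) = \int_{\mathbb R^n}\Big( |\partial_\sigma w|^2 + |\nabla_y w|^2 + |w|^2 + |w|^{p+1}\Big) K\,dy
\]
satisfies a Gronwall-type inequality $E(\sigma)\le C\big(E(0) + \int_0^\sigma E(\tau)^{\frac{p+1}{2}}\,d\tau + \cdots\big)$, with the nonlinear contribution estimated via the Gagliardo–Nirenberg inequality on the Gaussian space. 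Since the initial size is $E(0)\lesssim\varepsilon^2$, a continuity (bootstrap) argument shows that $E(\sigma)$ stays of order $\varepsilon^2$ as long as
\[
	\sigma \le \log\!\Big(C_\ast \varepsilon^{-\frac{1}{\frac{1}{p-1}-\frac{n}{2}}}\Big),
\]
which after undoing $\sigma=\log(1+B(t))$ and recalling \eqref{b_inv} translates precisely into $T_0\ge B^{-1}\big(C_\ast\varepsilon^{-1/(\frac{1}{p-1}-\frac{n}{2})}\big)$. The constant $\varepsilon_1$ is chosen so that the bootstrap assumption is self-improving on that time interval.

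The main obstacle I anticipate is twofold. First, making the change to self-similar variables \emph{legitimate} when $\beta=-1$: there $B(t)\sim\log(2+t)$ grows only logarithmically, so the scaling function degenerates and one must check that the error terms generated by $b'(t)$ — bounded by $b_3(1+t)^{-1}b(t)$ in \eqref{b} — are genuinely integrable in the new time variable and do not destroy the parabolic structure; this is the reason $\beta=-1$ is delicate and why the borderline $\beta=1$ must be excluded (there $\lambda_0$ vs. the scaling balance breaks down differently). Second, closing the energy estimate requires the nonlinear term to be \emph{subcritical} with respect to the available $\sigma$-growth: the exponent $\frac{p+1}{2}$ in the Gronwall inequality combined with the Gaussian Gagliardo–Nirenberg loss must leave a net decay, which is exactly where $p<p_F$ enters quantitatively. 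Once these two points are handled, the remaining steps — local existence (from the appendix), the continuity argument, and unwinding the variable changes — are routine.
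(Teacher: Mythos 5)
Your overall strategy --- pass to scaling variables, compare with the Gaussian profile, and close a weighted-energy bootstrap --- is the same as the paper's (Sections 4.1--4.4), but two points in your write-up are genuine gaps. The first is the quantitative heart of the argument. In your normalization $w=(1+s)^{1/(p-1)}u$ the linearized operator has the \emph{positive} eigenvalue $\lambda_0=\frac{1}{p-1}-\frac{n}{2}$, so the energy cannot ``stay of order $\varepsilon^2$''; it grows like $\varepsilon^2e^{2\lambda_0\sigma}$, and it is precisely this growth (equivalently, in the paper's mass-preserving gauge $u=(B(t)+1)^{-n/2}v$, the exponentially growing coefficient $e^{\frac{n}{2}(p_F-p)s}$ multiplying $|v|^p$ in the source term of \eqref{eq_vw}) that produces the lifespan $e^{\sigma_*}\sim\varepsilon^{-1/\lambda_0}$. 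Your displayed Gronwall inequality $E(\sigma)\le C\bigl(E(0)+\int_0^\sigma E(\tau)^{\frac{p+1}{2}}d\tau+\cdots\bigr)$ has an autonomous nonlinearity \emph{and} a bounded energy simultaneously; taken literally it would give $\sigma_*\gtrsim\varepsilon^{-(p-1)}$, which is the critical-case bound and would contradict the sharp upper bound of Corollary \ref{Theorem:1.4}. You need to keep the exponential factors on the nonlinear terms, as in \eqref{e5est}--\eqref{estm}, and then run the ``first doubling time'' argument for $M(s)=\sup_{\tau\le s}E_5(\tau)$: substituting $M(S_1)=2C_0'\varepsilon^2I_0$ into \eqref{estm} is exactly where $e^{S_1}\gtrsim\varepsilon^{-\frac{2(p-1)}{n(p_F-p)}}=\varepsilon^{-1/(\frac{1}{p-1}-\frac{n}{2})}$ comes from.

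The second gap is the choice of weight. The data are only in $H^{1,m}\times H^{0,m}$ with a polynomial weight, so the solution has no Gaussian decay and the energy $\int(\cdots)e^{|y|^2/4(1+\delta)}dy$ cannot even be initialized; the self-adjointness of $L$ on $L^2(e^{|y|^2/4}dy)$ is therefore not usable here. The paper instead works with polynomial weights $|y|^{2m}$, subtracts the zero mode explicitly by writing $v=\alpha(s)\varphi_0+f$ with $\int f\,dy=0$, recovers coercivity on the mean-zero remainder via antiderivatives for $n=1$ (Lemma \ref{lem_hardy}) or the multiplier $|\xi|^{-n/2-\delta}$ for $n\ge2$ (Lemma \ref{lem_hardy2}), and tracks the ODE for $\alpha$ separately through $E_3,E_4$. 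Some such device is indispensable; without it the polynomially weighted energy inequality is not coercive. (A minor remark: $\beta=-1$ is in fact the \emph{easy} case for the wave-type correction, since the coefficient $e^{-s}/b(t(s))^2\sim e^{-\frac{1-\beta}{1+\beta}s}$ then decays faster than any exponential; the real constraint is $\beta<1$, which keeps that coefficient decaying at all.)
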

\begin{Remark}
Under the assumptions in Proposition \ref{prop_low},
combining the above estimate and \eqref{b_inv}, we see that
\[
	T_0 \ge
	\begin{cases}
	\displaystyle C_{\ast} \varepsilon^{-\frac{1}{(\frac{1}{p-1}-\frac{n}{2})(1+\beta)} }
		& (\beta \in (-1,1)),\\
	\displaystyle \exp \left( C_{\ast} \varepsilon^{-\frac{1}{\frac{1}{p-1}-\frac{n}{2}} } \right)
		& (\beta = -1).
	\end{cases}
\]
\end{Remark}

In the case $\beta \in (-1,1)$,
the rate of $\varepsilon$ coincides with that of Corollary \ref{Theorem:1.4}.
Namely, we have the sharp estimate of the lifespan of the solution.
In the case $\beta = -1$, we have an exponential lower bound,
which is the so-called almost global existence of the solution.
This is quite reasonable because in this case where the damping
is very strong, it helps well the solution exist longer time.

On the other hand, in the critical case $p=p_F$, we have the following:
\begin{Proposition}\label{prop_lowcr}
Let $\beta \in [-1, 1)$, $p=p_F$
and let
$(u_0, u_1) = \varepsilon (a_0, a_1)$
with $\varepsilon >0$.
We assume that
$(a_0, a_1) \in H^{1,m}(\mathbb{R}^n) \times H^{0,m}(\mathbb{R}^n)$
with
$m = 1\ (n=1)$, $m>n/2+1\ (n \ge 2)$.
Then, there exist constants
$\varepsilon_2 = \varepsilon(n,\beta, p, m, \| a_0 \|_{H^{1,m}}, \| a_1 \|_{H^{0,m}} )> 0$
and
$C_{\ast} = C_{\ast} (n,\beta, p, m, \| a_0 \|_{H^{1,m}}, \| a_1 \|_{H^{0,m}} ) > 0$
such that
for any $\varepsilon \in (0,\varepsilon_2]$,
the lifespan $T_0 = T_0(\varepsilon)$
of the associated strong solution is estimated by
\begin{align*}%
	T_0 \ge B^{-1}\left( \exp \left( C_{\ast} \varepsilon^{-(p-1)} \right) \right).
\end{align*}%
\end{Proposition}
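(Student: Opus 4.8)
The plan is to carry out the same method of scaling variables used, in the subcritical regime $p<p_F$, to prove Proposition~\ref{prop_low} (following \cite{Wa17JMAA}), modifying only the step in which the strict subcriticality was exploited. Since $p=p_F$ forces $\tfrac{1}{p-1}=\tfrac n2$, I would pass to the self-similar variables adapted to the time-scaling function $B(t)$,
\[
  s=\log\bigl(1+B(t)\bigr),\qquad
  y=\frac{x}{\sqrt{1+B(t)}},\qquad
  w(s,y)=\bigl(1+B(t)\bigr)^{\frac n2}u(t,x);
\]
using $dB(t)=b(t)^{-1}\,dt$ and the bounds \eqref{b} on $b'(t)/b(t)$, \eqref{eq:1} is transformed into a perturbation of the rescaled heat equation
\[
  \partial_s w=\mathcal L w+|w|^p+R[w],\qquad
  \mathcal L=\Delta_y+\tfrac12\,y\cdot\nabla_y+\tfrac n2,
\]
with a remainder $R[w]$ collecting the contributions of $\partial_t^2 u$ and of $b'(t)$. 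What makes $p=p_F$ critical is that the two usual sources of smallness degenerate simultaneously: the coefficient of $|w|^p$ equals $\bigl(1+B(t)\bigr)^{\frac n2+1-\frac n2\,p}=1$, i.e.\ it is constant, and, because $\tfrac{1}{p-1}=\tfrac n2$, the operator $\mathcal L$ has spectrum $\{0,-\tfrac12,-1,\dots\}$ in the relevant weighted space, so its principal (Gaussian) mode is only marginally stable. On the other hand, since $\beta\in[-1,1)$, every term of $R[w]$ still carries a genuinely decaying factor $e^{-\delta s}$ for some $\delta=\delta(\beta)>0$; this reduction and the choice of weighted space are imported essentially verbatim from the proof of Proposition~\ref{prop_low}.

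I would then run a continuity (bootstrap) argument in the weighted space $\mathcal X$ built from $\langle y\rangle^m$, with $m$ as in the hypothesis, so that $\mathcal X\hookrightarrow L^\infty\cap L^{2p}$ and the semigroup $e^{s\mathcal L}$ is uniformly bounded on $\mathcal X$ and decays like $e^{-s/2}$ on the orthogonal complement of the Gaussian mode. Since $e^{s\mathcal L}$ does not decay on the Gaussian mode and the nonlinearity has constant coefficient, the best a priori control one can expect is that the Gaussian (mass-like) component of $w$ changes at rate at most $\sim\varepsilon^p$ — hence grows at most \emph{linearly} in $s$ — while the remaining components stay of size $O(\varepsilon)$. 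Concretely, under the bootstrap assumption $\|w(s)\|_{\mathcal X}\le 2K\varepsilon$ on $[0,s_\ast]$, the resulting integral inequality gives $\|w(s)\|_{\mathcal X}\le K\varepsilon+CK^p\varepsilon^p s_\ast+C'\varepsilon$ on $[0,s_\ast]$, the last summand coming from the integrable tail $\int_0^\infty e^{-\delta\sigma}\,d\sigma$ of the remainder. Choosing $s_\ast=c\,\varepsilon^{-(p-1)}$ with $c=c(n,p,b,\beta,m)>0$ small enough closes the bootstrap, and by local well-posedness in $\mathcal X$ and a continuation argument the rescaled solution, hence $u$ itself, exists as long as $\log\bigl(1+B(t)\bigr)\le c\,\varepsilon^{-(p-1)}$; undoing the change of variables yields $T_0\ge B^{-1}\bigl(\exp(C_\ast\varepsilon^{-(p-1)})\bigr)$ with $C_\ast$ comparable to $c$, as claimed.

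The main obstacle is precisely the criticality: because the linear part has no decay on its principal mode and the nonlinearity is constant-coefficient, there is no slack in the estimates, and one must capture the sharp growth $m(s)\sim\varepsilon+\varepsilon^p s$ of the Gaussian component of $w$ together with genuine exponential decay of its complement — the analogue for \eqref{eq:1} of the sharp large-time bound $\|v(\sigma)\|_{L^\infty}\lesssim\varepsilon(1+\sigma)^{-n/2}$, valid up to the lifespan, for the Fujita-critical heat equation $v_\sigma=\Delta v+|v|^p$. Intertwined with this is the control of $R[w]$, which a priori involves $\partial_s^2 w$ (equivalently $\partial_t u$ and $\partial_t^2 u$); handling it requires a second, higher-order weighted energy estimate run in parallel, and it is exactly here that the strict inequality $\beta<1$ enters and supplies the decaying factor $e^{-\delta s}$ that turns $R[w]$ into a genuine perturbation. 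Once these two estimates are set up — as in the subcritical proof — what remains is the bookkeeping of constants needed to extract the scaled lifespan $c\,\varepsilon^{-(p-1)}$.
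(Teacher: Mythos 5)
Your proposal follows essentially the same route as the paper: pass to the scaling variables $s=\log(1+B(t))$, $y=(1+B(t))^{-1/2}x$, note that at $p=p_F$ the nonlinear coefficient becomes constant while the terms coming from $\partial_t^2u$ and $b'(t)$ still carry a decaying factor (this is where $\beta<1$ enters), and close an $O(\varepsilon)$ a priori bound on an $s$-interval of length $c\,\varepsilon^{-(p-1)}$, which undoes to $T_0\ge B^{-1}\bigl(\exp(C_*\varepsilon^{-(p-1)})\bigr)$. The paper realizes this via the weighted energies of Lemmas \ref{lem_en0}--\ref{lem_en2} and a first-crossing-time argument on $M(s)=\sup_{\tau\le s}E_5(\tau)$ rather than a Duhamel bootstrap, and the parallel higher-order energy estimate you flag for the second-order remainder is exactly what the first-order system \eqref{eq_vw} and the energies $E_0$--$E_4$ provide, so the two arguments coincide in substance.
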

\begin{Remark}
Under the assumptions in Proposition \ref{prop_lowcr},
combining the above estimate and \eqref{b_inv}, we see that
\[
	T_0 \ge
	\begin{cases}
	\displaystyle
	\exp \left( C_{\ast}\varepsilon^{-(p-1)} \right)
		& ( \beta \in (-1,1)),\\
	\displaystyle
		\exp \left(
			\exp \left( C_{\ast} \varepsilon^{-(p-1)} \right)
			\right)
		& (\beta = -1).
	\end{cases}
\]
\end{Remark}

Proposition \ref{prop_lowcr} shows that
in the critical case, we have
the exponential and the double-exponential estimate
from the below for the case
$\beta \in (-1,1)$ and $\beta = -1$, respectively.
Comparing with Proposition \ref{prop_low},
it is also quite reasonable.
We also remark that Propositions \ref{prop_low} and Proposition \ref{prop_lowcr}
are still true if we replace the nonlinearity
$|u|^p$
by
$\pm |u|^{p-1}u$ or $-|u|^p$.

Our results with the previous ones
are summarized in Table 1,
where we consider the damping
$b_0 (1+t)^{-\beta}$
with $b_0 > 0$ and $\beta \in [-1,1]$.
\begin{table}[!h]
\begingroup
\renewcommand{\arraystretch}{1.8}
\begin{tabular}{|c|c|c|} \hline
	$\beta \backslash p$
		&  $\displaystyle 1< p < p_F$
		&  $\displaystyle p= p_F$ \\[4pt]
	\hline
	$\beta = -1$
		&$T_0 \sim
			\exp \left( C \varepsilon^{-\frac{1}{\frac{1}{p-1}-\frac{n}{2}} } \right)$
		&$\exp \left(
			\exp \left( C \varepsilon^{-(p-1)} \right)
			\right)
			\le T_0$\\
	\hline
	$-1<\beta <1$
	&$T_0 \sim C \varepsilon^{-\frac{1}{(\frac{1}{p-1}-\frac{n}{2})(1+\beta)} }$
	&$\exp \left( C \varepsilon^{-(p-1)} \right)
		\le T_0 \le \exp \left( C \varepsilon^{-p} \right)$ \ \\
	\ &\ &(upper bound is by \cite{IkeOg16})\\
	\hline
	$\beta = 1$
	&$T_0 \le C \varepsilon^{-\frac{1}{2(\frac{1}{p-1}-\frac{n}{2})} }$,
	&open (in general), \\
	\ 
	&$T_0 \sim \varepsilon^{-\frac{p-1}{3-p}}$
	&$T_0 \sim \exp \left( C \varepsilon^{-(p-1)} \right)$\\
	\ &for $n=1, b_0=2$ (\cite{Wak16})
	&for $n=1, b_0=2$ (\cite{Wak16})\\
	\hline
\end{tabular}
\caption{Estimates of lifespan}
\endgroup
\end{table}

When $1<p<p_F$ and $\beta =1$,
we have an upper bound of $T_0$,
while it seems not to be optimal in general.
In this case it is known that
the critical exponent may change
(see D'Abbicco, Lucente and Reissig \cite{DaLuRe15}
and Wakasa \cite{Wak16}).

We mention about the strategy of the proof.
Our method is a hybrid version of
the method by Li and Zhou \cite{LiZh95}
and by Zhang \cite{Zh01}.
The method by Li and Zhou \cite{LiZh95} is based on
a ordinary differential inequality.
However, in order to derive an ordinary differential inequality
from the damped wave equation,
their argument requires the positivity of the fundamental solution,
which fails in higher dimensional cases.
The method by Zhang \cite{Zh01} is the so-called
test function method.
He considered an average of the nonlinearity of the solution
$\int_0^{\infty}\int_{\mathbb{R}^n} |u(t,x)|^p \psi_R(t,x) dxdt$
with suitable family of cut-off functions
$\{ \psi_R \}_{R>0}$,
and leads to contradiction
by the integration by parts and a scaling argument.
However, this method is based on contradiction argument,
and the mechanism of the blow-up is unclear.
Moreover, by this approach, we cannot obtain
blow-up rates of solutions.

To overcome these difficulties,
we employ the method developed by the first author and Ozawa \cite{FuOz1, FuOz2}
in which the lifespan of the solution for
a nonlinear Schr\"{o}dinger equation is studied.
They considered a localized average of the solution
$\int_{\mathbb{R}^n} u(t,x) \phi(x) dx$
with a cut-off function $\phi (x)$,
and derive an ordinary differential inequality for it.
Then, they showed the estimate of the lifespan of
the solution of the ordinary differential inequality.
In this paper, we adapt their method to the damped wave equations.
First, we establish the blow-up and estimate of the lifespan
of the solution to the ordinary differential inequality
\begin{align}
\label{eq:6}
	\begin{cases}
	f''(t) + b(t) f'(t) \geq \gamma f(t)^p,\\
	f(0) \geq \varepsilon _0,\\
	f'(0) \geq A_0 \varepsilon _0,
	\end{cases}
\end{align}
with $A_0, \gamma, \varepsilon_0 >0$.
We remark that Li and Zhou also obtained the
finite time blow-up for \eqref{eq:6} and
the life-span estmate.
However, as far as the authors know,
explicit subsolutions for \eqref{eq:6}
had not been known even though
they are well known
for a first order ordinary differential inequality
$f' \geq f^p$.
We construct explicit subsolutions
by a comparison lemma given by Li and Zhou \cite{LiZh95}
and the blow-up rate \eqref{eq:rate} follows form these explicit subsolution.
For detail, see Proposition \ref{Theorem:2.3}.
Next, to prove Proposition \ref{Theorem:1.3},
we follow \cite{FuOz1, FuOz2} and consider the localized average
$I_{\phi}(t) = \int_{\mathbb{R}^n} u(t,x) \phi(x) dx$,
and derive the ordinary differential inequality \eqref{eq:6}
from the equation \eqref{eq:1}.
Finally, for Corollary \ref{Theorem:1.4},
we choose a special family of cut-off functions
and apply a scaling argument
to reduce its proof to Proposition \ref{Theorem:1.3}.

For Propositions \ref{prop_low} and \ref{prop_lowcr},
we employ the method of scaling variables,
which was originally introduced by
Gallay and Raugel \cite{GaRa98}.
Coulaud \cite{Co14} refined it and applied
to the second grade fluids equations in three space dimensions.
Recently, the third author \cite{Wa17JMAA} applied the method to
obtain the asymptotic profile for the
semilinear wave equation with time-dependent damping.

This paper is organized as follows.
In section 2,
we study the blow-up of solutions to
the ordinary differential inequality \eqref{eq:6}.
In section 3,
applying the theory of ordinary differential inequalities
prepared in Section 2, we give a proof of Proposition \ref{Theorem:1.3}
and Corollary \ref{Theorem:1.4}.
Section 4 is devoted to the proof of Propositions \ref{prop_low} and \ref{prop_lowcr}.
Finally, in the appendix, we give a proof of local existence
of solutions in the energy space.

We finish this section with notations used throughout this paper.
We denote by $C$ a generic constant, which may change
from line to line.
In particular, we sometimes use the symbol
$C(\ast,\ldots,\ast)$
for constants depending on the quantities appearing in parenthesis.

We give the notations of function spaces.
Let $L^p(\mathbb{R}^n)$ be the usual Lebesgue space
equipped with the norm
\begin{align*}%
	\| f \|_{L^p} = \left( \int_{\mathbb{R}^n} |f(x)|^p dx \right)^{1/p}
	\ (1\le p < \infty),\quad
	\| f \|_{L^{\infty}} = {\rm ess\,sup\,}_{x\in \mathbb{R}^n} |f(x)|.
\end{align*}%
For $s \in \mathbb{Z}_{\ge 0}, m \ge 0$,
we define the weighted Sobolev space $H^{s,m}(\mathbb{R}^n)$ by
\begin{align*}
	H^{s,m}(\mathbb{R}^n)
	&= \{ f \in L^2(\mathbb{R}^n) ; \| f \|_{H^{s,m}} < \infty \}, \\
	\| f \|_{H^{s,m}}
	&= \left( \sum_{|\alpha| \le s }
		\int_{\mathbb{R}^n} (1+|x|^2)^m |\partial_x^{\alpha} f (x) |^2\,dx \right)^{1/2}.
\end{align*}
In particular, when $m=0$, we also denote $H^{s,0}(\mathbb{R}^n)$ as $H^{s}(\mathbb{R}^n)$.
For an interval $I$
and a Banach space $X$,
we define $C^r(I; X)$ as the space of $r$-times
continuously differentiable mapping from
$I$ to $X$ with respect to the topology in $X$.


\section{Estimates of the lifespan of solutions to ordinary differential inequalities}
In this section,
we study the estimates of lifespan of solutions to
the ordinary differential inequality \eqref{eq:6}.
To this end, the following comparison theorem plays a critical role:

\begin{Lemma}{\cite[Lemma 3.1]{LiZh95}}
\label{Theorem:2.1}
Let $T>0$.
We assume that functions
$k, h \in C^2([0,T))$
satisfy
	\[
	\begin{cases}
	a(t) k''(t) + k'(t) \geq c(t) k(t)^p,\\
	a(t) h''(t) + h'(t) \leq c(t) h(t)^p
	\end{cases}
	\]
for $t \in [0,T)$,
where $p \geq 1$ and
$a(t), c(t)$
are nonnegative continuous function on $[0,T)$.
We further assume that
	\[
	\begin{cases}
	k(0) > h(0),\\
	k'(0) \geq h'(0).
	\end{cases}
	\]
Then, we have $k'(t) > h'(t)$ for any $t \in [0,T)$.
\end{Lemma}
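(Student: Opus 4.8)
The plan is to set $w = k - h$ and prove, by a continuity (connectedness) argument, that $w > 0$ on $[0,T)$ and $w' > 0$ on $(0,T)$; together with the hypothesis $w'(0) = k'(0) - h'(0) \ge 0$ this gives the conclusion (when $w'(0) = 0$ one still only gets $w' > 0$ for $t > 0$, a harmless boundary point). Subtracting the two differential inequalities yields
\[
	a(t) w''(t) + w'(t) \ge c(t)\big( k(t)^p - h(t)^p \big), \qquad t \in [0,T),
\]
while the hypotheses read $w(0) > 0$, $w'(0) \ge 0$. The elementary point is that $s \mapsto s^p$ is nondecreasing on $[0,\infty)$ for $p \ge 1$, so on any subinterval where $w > 0$ (hence $k > h \ge 0$) the right-hand side is $\ge 0$, and in fact strictly positive at every point where $c > 0$; thus there $a w'' + w' \ge 0$.

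First I would secure an initial interval. By continuity $w > 0$ on $[0,\delta)$ for some $\delta > 0$. If $w'(0) > 0$, then $w' > 0$ on a (possibly smaller) initial interval. If $w'(0) = 0$, evaluating the inequality at $t = 0$ gives $a(0) w''(0) \ge c(0)(k(0)^p - h(0)^p) > 0$, so $a(0) > 0$ and $w''(0) > 0$; hence $w'$ is strictly increasing near $0$ and $w' > 0$ on $(0,\delta')$. Either way $w, w' > 0$ on some $(0, \delta'')$. Now set
\[
	t^\ast = \sup\{ \tau \in (0,T] : w > 0 \text{ on } [0,\tau) \text{ and } w' > 0 \text{ on } (0,\tau) \},
\]
which is well defined and $> 0$. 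Suppose $t^\ast < T$. Since $w' > 0$ on $(0,t^\ast)$, $w$ is strictly increasing there, so $w(t^\ast) \ge w(0) > 0$, i.e. $k(t^\ast) > h(t^\ast) \ge 0$. If $w'(t^\ast) = 0$, the differential inequality at $t^\ast$ forces $a(t^\ast) w''(t^\ast) \ge c(t^\ast)(k(t^\ast)^p - h(t^\ast)^p) > 0$, so $a(t^\ast) > 0$ and $w''(t^\ast) > 0$; but then $w'(t) = w'(t^\ast) - \int_t^{t^\ast} w''(s)\,ds < 0$ for $t < t^\ast$ close to $t^\ast$, contradicting $w' > 0$ on $(0,t^\ast)$. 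Hence $w(t^\ast) > 0$ and $w'(t^\ast) > 0$, so by continuity $w, w' > 0$ on a neighbourhood of $t^\ast$, contradicting maximality. Therefore $t^\ast = T$, and $k' - h' = w' > 0$ on $(0,T)$.

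The step that needs the most care, and the only genuine obstacle, is the passage from the weak inequality $a w'' + w' \ge 0$ to the \emph{strict} inequality $w' > 0$, i.e. ruling out a first zero of $w'$: this forces one to extract second-order information ($w'' > 0$) from the differential inequality, which is possible precisely because the source term $c(k^p - h^p)$ is strictly positive once $w > 0$ (and $c > 0$), which is the situation in all applications here — for \eqref{eq:6}, dividing by $b$ gives exactly this form with $c = \gamma/b > 0$. One should also note the degenerate possibility $a(t) = 0$: at such a point the inequality reads $w'(t) \ge c(t)(k^p - h^p) \ge 0$ directly, so a zero of $w'$ simply cannot occur there, and the argument above is unaffected.
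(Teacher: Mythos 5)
The paper does not prove this lemma at all --- it is quoted from Li--Zhou \cite{LiZh95} --- so there is no in-paper argument to compare against; I will judge your proof on its own. The strategy (set $w=k-h$, secure an initial interval where $w,w'>0$, and rule out a first vanishing point of $w'$ by extracting $w''>0$ from the differential inequality) is the standard one for such comparison lemmas and your execution of it is sound. The one substantive caveat, which you yourself flag, is that the argument uses two hypotheses that are not in the printed statement: $c(t)>0$ (at least at any point where $w'$ could vanish) and $h\ge 0$, so that $k>h$ actually yields $k^p>h^p$. These strengthenings are not a defect of your proof but of the statement: with $c$ merely nonnegative the lemma is false as written (take $a\equiv 1$, $c\equiv 0$, $k\equiv 1$, $h\equiv 0$; every hypothesis holds with $k'(0)=h'(0)=0$, yet $k'\equiv h'$), and for the same reason the conclusion cannot be strict at $t=0$ when $k'(0)=h'(0)$. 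Both extra hypotheses do hold in the only place the lemma is used: in Proposition \ref{Theorem:2.3} one divides $\widetilde f''+b\widetilde f'\ge \widetilde f^p$ by $b(t)>0$, so $a=c=b^{-1}>0$ everywhere, and the subsolution $\widetilde g_\rho$ is positive. Two small points to tighten: the parenthetical ``(hence $k>h\ge 0$)'' presents $h\ge 0$ as a consequence of $w>0$, which it is not --- it is an additional (implicit) assumption; and in your closing remark on the degenerate case $a(t)=0$, the inequality $w'(t)\ge c(t)\bigl(k(t)^p-h(t)^p\bigr)$ excludes a zero of $w'$ only when $c(t)>0$, so that remark should carry the same positivity proviso as the rest of the argument.
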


Thanks to Lemma \ref{Theorem:2.1},
we analyze the behavior of solutions for \eqref{eq:6}
by comparing with subsolutions.
In the next lemma, we introduce our subsolution.

\begin{Lemma}
\label{Theorem:2.2}
Let $A_0 > 0$, $\beta \in [-1,1]$, $p>1$ and
let $\varepsilon _0 \in (0, 1]$.
We put
	\begin{align}
	\label{t1}
	T_1 =
	B^{-1} \left( \mu (p,b,\beta, A_0)^{-1} \varepsilon_0^{1-p} \right).
	\end{align}
Moreover, for $t \in \lbrack 0,T_1)$, we define
	\begin{align*}
	g(t)
	= \varepsilon_0
	\bigg( 1 - \mu (p,b,\beta ,A_0) \varepsilon _0^{p-1}
	B(t) \bigg)^{- \frac{2}{p-1}}.
	\end{align*}
Then $g$ satisfies that
	\[
	\begin{cases}
	g''(t) + b(t) g'(t) \leq g(t)^p,
	&\quad \mbox{for}\quad t \in \lbrack 0,T_1),\\
	g(0) = \varepsilon_0,\\
	g'(0) \leq A_0 \varepsilon_0.
	\end{cases}
	\]
\end{Lemma}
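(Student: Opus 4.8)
The plan is to exhibit $g$ directly as the required supersolution. Abbreviate $\mu = \mu(p,b,\beta,A_0)$ and set $h(t) = 1 - \mu\varepsilon_0^{p-1}B(t)$, so that $g = \varepsilon_0 h^{-2/(p-1)}$. Since $B(0)=0$, $B' = b^{-1} > 0$, and by the definition \eqref{t1} of $T_1$ one has $B(T_1) = \mu^{-1}\varepsilon_0^{1-p}$, the function $h$ is positive and non-increasing on $[0,T_1)$ with $h(0)=1$, hence $0 < h \le 1$ there. The initial data are then immediate: $g(0)=\varepsilon_0$, and a single differentiation gives $g'(0) = \tfrac{2\mu}{p-1}\varepsilon_0^p b(0)^{-1} \le A_0\varepsilon_0$, using $\mu \le \tfrac{p-1}{2}b(0)A_0$ and $\varepsilon_0^{p-1}\le 1$.

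For the differential inequality I would differentiate $g = \varepsilon_0 h^{-2/(p-1)}$ twice, using $h' = -\mu\varepsilon_0^{p-1}b^{-1}$ and $h'' = \mu\varepsilon_0^{p-1}b'b^{-2}$, and observe that $b(t)h'(t) \equiv -\mu\varepsilon_0^{p-1}$ is constant, so $b(t)g'(t) = \tfrac{2\mu}{p-1}\varepsilon_0^p h^{-(p+1)/(p-1)}$. Since also $g^p = \varepsilon_0^p h^{-2p/(p-1)}$, multiplying the desired inequality $g'' + bg' \le g^p$ by the positive quantity $\varepsilon_0^{-1}h^{2p/(p-1)}$ reduces it (after simplifying $h^{2p/(p-1)}h^{-(p+1)/(p-1)}=h$) to
\[
  \frac{2(p+1)}{(p-1)^2}\mu^2\varepsilon_0^{p-1}b(t)^{-2}
  - \frac{2\mu}{p-1}b'(t)b(t)^{-2}h(t)
  + \frac{2\mu}{p-1}h(t) \le 1 .
\]
On $[0,T_1)$ I would bound the last two terms with $0 < h\le 1$, $|b'|\le b_3(1+t)^{-1}b$ and $b \ge b_1(1+t)^{-\beta}$ (so $(1+t)^{\beta-1}\le 1$ for $\beta\le1$), getting a bound $\tfrac{2\mu}{p-1}(b_1^{-1}b_3+1)$. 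For the first term, $b(t)^{-2}\le b_1^{-2}(1+t)^{2\beta}$; when $\beta\le 0$ this is $\le b_1^{-2}$ and, since $\mu\varepsilon_0^{p-1}\le1$, the term is $\le \tfrac{2(p+1)}{(p-1)^2}b_1^{-2}\mu$, while when $\beta\in(0,1]$ I would use \eqref{b_inv} in the form $1+t \le (1+B_4)(1+B(t))^{1/(1+\beta)}$ together with $B(t) < \mu^{-1}\varepsilon_0^{1-p}$ on $[0,T_1)$ — hence $1+B(t) < 2\mu^{-1}\varepsilon_0^{1-p}$ — to get
\[
  \varepsilon_0^{p-1}\mu^2(1+t)^{2\beta}
  < \{2^{1/(1+\beta)}(1+B_4)\}^{2\beta}\,\mu^{2/(1+\beta)}\,\varepsilon_0^{(p-1)(1-\beta)/(1+\beta)}
  \le \{2^{1/(1+\beta)}(1+B_4)\}^{2\beta}\mu ,
\]
where the last inequality uses $\tfrac{2}{1+\beta}\ge1$, $\tfrac{(p-1)(1-\beta)}{1+\beta}\ge0$ and $\mu,\varepsilon_0\le1$. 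Either way the first term is $\le \tfrac{2(p+1)}{(p-1)^2}b_1^{-2}\{2^{1/(1+\beta)}(1+B_4)\}^{\max(0,2\beta)}\mu$. Summing the three bounds, the left side is at most $\mu$ times exactly the bracket whose reciprocal is the third entry in the minimum defining $\mu$, hence $\le1$, which is the required inequality.

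The main obstacle is the case $\beta\in(0,1]$ of the first-term estimate: the factor $(1+t)^{2\beta}$ grows in $t$ and must be absorbed by a \emph{single} power of $\mu$ together with a \emph{nonnegative} power of $\varepsilon_0$. This forces one to use the restriction to $[0,T_1)$ (equivalently $h>0$, i.e.\ $B(t)<\mu^{-1}\varepsilon_0^{1-p}$) and to keep track of the exponents of $\mu$ and $\varepsilon_0$ carefully; it is precisely here that the assumption $\beta\le1$ and the explicit constant $\{2^{1/(1+\beta)}(1+B_4)\}^{\max(0,2\beta)}$ built into the definition of $\mu$ are used (the case $\beta=-1$, where this constant is set to $1$, being trivial since then $(1+t)^{2\beta}\le1$). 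Everything else is a routine computation.
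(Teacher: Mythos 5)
Your proposal is correct and follows essentially the same route as the paper's proof: direct differentiation of $g$, reduction to a scalar inequality bounded by the bracket in the definition of $\mu(p,b,\beta,A_0)$, and absorption of the factor $(1+t)^{2\beta}$ for $\beta\in(0,1]$ via $B(t)<\mu^{-1}\varepsilon_0^{1-p}$ and \eqref{b_inv} (the paper routes this through $1+T_1$, which is equivalent). The only slip is cosmetic: the normalizing factor should be $g^{-p}=\varepsilon_0^{-p}h^{2p/(p-1)}$ rather than $\varepsilon_0^{-1}h^{2p/(p-1)}$, but the displayed reduced inequality you obtain is the correct one.
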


\begin{proof}
For simplicity, we denote $\mu(p,b,\beta,A_0)$ as $\mu$.
Since $\mu \leq \frac{p-1}{2} b(0) A_0$,
by a direct calculation, we have
	\begin{align*}
	g'(t)
	&= \frac{2 \mu }{p-1} \varepsilon _0^p
	\bigg( 1 - \mu \varepsilon _0^{p-1} B(t) \bigg)^{- \frac{p+1}{p-1}}
	b(t)^{-1},\\
	g'(0)
	&= \frac{2 \mu }{p-1} b(0)^{-1}\varepsilon_0^p
	\leq A_0 \varepsilon_0,\\
	g''(t)
	&= -\frac{2 \mu }{p-1} \varepsilon _0^p
	\bigg( 1 - \mu \varepsilon _0^{p-1}
	B(t) \bigg)^{- \frac{p+1}{p-1}}
	b'(t) b(t)^{-2}\\
	&\quad + \frac{2 (p+1)}{(p-1)^2} \mu^2 \varepsilon_0^{2p-1}
	\bigg( 1 - \mu \varepsilon _0^{p-1} B(t)
	\bigg)^{- \frac{2p}{p-1}} b(t)^{-2}.
	\end{align*}
Then, for $t < T_1$, we obtain
	\begin{align*}
	&g''(t) + b(t) g'(t)\\
	&\leq g(t)^p
	\bigg(
	\frac{2(p+1)}{(p-1)^2} \varepsilon_0^{p-1} b(t)^{-2} \mu^2
	+ \frac{2 }{p-1} b'(t)b(t)^{-2}\mu + \frac{2}{p-1} \mu \bigg)\\
	&\leq g(t)^p
	\bigg(
	\frac{2(p+1)}{(p-1)^2}
	2^{\frac{2\beta}{1+\beta}} b_1^{-2} (1+B_4)^{\max(0,2 \beta )}
	+ \frac{2 ( b_1^{-1}b_3 +1)}{p-1} \bigg) \mu\\
	&\leq g(t)^p.
	\end{align*}
Here, for the second inequality, when
$\beta \in (0,1]$,
we have used that
	\begin{align*}
	\varepsilon_0^{p-1} b(t)^{-2} \mu
	&\le \varepsilon_0^{p-1}\mu  b_1^{-2} (1 + T_1)^{2\beta} \\
	&\le \varepsilon _0^{p-1} \mu b_1^{-2}
		\left[
			1+ B^{-1} \left( \mu^{-1} \varepsilon_0^{1-p} \right) \right]^{2\beta} \\
	&\le \varepsilon _0^{p-1} \mu b_1^{-2}
		\left[ 1 + 
			B_4 \left( 1+\mu^{-1} \varepsilon_0^{1-p} \right)^{\frac{1}{1+\beta}}
		\right]^{2\beta} \\
	&\le \varepsilon _0^{p-1} \mu b_1^{-2} (1+B_4)^{2\beta}
		\left( 1+ \mu^{-1} \varepsilon_0^{1-p} \right)^{\frac{2\beta}{1+\beta}} \\
	&\le 2^{\frac{2\beta}{1+\beta}}
		\left( \varepsilon _0^{p-1} \mu \right)^{\frac{1-\beta}{1+\beta}} b_1^{-2}
			(1+B_4)^{2\beta} \\
	&\leq 2^{\frac{2\beta}{1+\beta}} b_1^{-2}(1+B_4)^{2\beta},
	\end{align*}
	and for the third inequality we have used the definition of
	$\mu(p,b,\beta,A_0)$.
\end{proof}

\begin{Proposition}
\label{Theorem:2.3}
Let $T_0>0$,
$A_0 > 0$,
$\beta \in [-1,1]$,
$p>1$,
$\gamma  > 0$,
and let
$\varepsilon _0 \in (0,\gamma^{-\frac{1}{p-1}}]$.
Assume that $f \in C^2([0,T_0))$ satisfies
$f(t) > 0$ for $t\in [0,T_0)$ and
	\[
		\begin{cases}
		f''(t) + b(t) f'(t) \geq \gamma  f(t)^p \quad \mbox{for}\quad t \in [0,T_0),\\
		f(0) > \varepsilon _0,\\
		f'(0) \geq A_0 \varepsilon _0.
		\end{cases}
	\]
Then,
with $\delta_0 = \gamma^{\frac{1}{p-1}} \varepsilon _0$,
we have
	\begin{align*}
	f (t)
	\geq
	\varepsilon _0 \bigg( 1
	- \mu (p,b,\beta,A_0) \delta_0^{p-1}
	B(t) \bigg)^{- \frac{2}{p-1}}
	\end{align*}
for $t \in [0,T_0)$,
and $T_0$ is estimated as
	\begin{align}
	\label{t0}
		T_0 \le 
		B^{-1} \left( \mu (p,b,\beta, A_0)^{-1} \delta_0^{1-p} \right) .
	\end{align}
\end{Proposition}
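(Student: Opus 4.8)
The plan is to first normalize the coefficient $\gamma$ to $1$ by a scaling, and then to compare $f$ with the explicit subsolution constructed in Lemma \ref{Theorem:2.2}, using the comparison principle of Lemma \ref{Theorem:2.1}.

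I would set $F(t) = \gamma^{\frac{1}{p-1}} f(t)$ on $[0,T_0)$. Since $\gamma^{\frac{1}{p-1}}\cdot \gamma = \gamma^{\frac{p}{p-1}}$, a direct computation gives $F''(t) + b(t) F'(t) = \gamma^{\frac{1}{p-1}}(f''(t) + b(t)f'(t)) \ge \gamma^{\frac{p}{p-1}} f(t)^p = F(t)^p$, and $F(0) = \gamma^{\frac{1}{p-1}} f(0) > \gamma^{\frac{1}{p-1}} \varepsilon_0 = \delta_0$, $F'(0) = \gamma^{\frac{1}{p-1}} f'(0) \ge A_0 \delta_0$, with $F > 0$ on $[0,T_0)$. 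Because $\varepsilon_0 \le \gamma^{-\frac{1}{p-1}}$ we have $\delta_0 \in (0,1]$, so Lemma \ref{Theorem:2.2}, applied with $\varepsilon_0$ there replaced by $\delta_0$, provides a function $g$ defined on $[0,T_1)$ with $T_1 = B^{-1}(\mu(p,b,\beta,A_0)^{-1}\delta_0^{1-p})$ satisfying $g'' + bg' \le g^p$, $g(0) = \delta_0$, $g'(0) \le A_0 \delta_0$, and $g > 0$.

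Next I would apply the comparison Lemma \ref{Theorem:2.1} on $[0,\min(T_0,T_1))$ with $a(t) = c(t) = b(t)^{-1}$, which are nonnegative and continuous since $b$ is smooth and positive: dividing the differential inequalities for $F$ and for $g$ by $b(t)$ casts them into the form required there, and the initial data satisfy $F(0) > \delta_0 = g(0)$ and $F'(0) \ge A_0\delta_0 \ge g'(0)$. Hence $F'(t) > g'(t)$ on $[0,\min(T_0,T_1))$, and integrating from $0$ together with $F(0) > g(0)$ yields $F(t) > g(t)$ there. Dividing by $\gamma^{\frac{1}{p-1}}$ gives $f(t) > \varepsilon_0(1 - \mu(p,b,\beta,A_0)\delta_0^{p-1}B(t))^{-\frac{2}{p-1}}$ for $t \in [0,\min(T_0,T_1))$.

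Finally, to obtain \eqref{t0} and extend the last bound to all of $[0,T_0)$, I would argue by contradiction that $T_0 \le T_1$. If $T_0 > T_1$, then since $B$ is continuous and increasing with $B(T_1) = \mu(p,b,\beta,A_0)^{-1}\delta_0^{1-p}$, we have $1 - \mu(p,b,\beta,A_0)\delta_0^{p-1}B(t) \to 0^+$ as $t \uparrow T_1$, so $g(t) \to +\infty$ and therefore $f(t) > \gamma^{-\frac{1}{p-1}}g(t) \to +\infty$; but $f \in C^2([0,T_0))$ with $T_1 < T_0$ forces $\lim_{t\uparrow T_1} f(t) = f(T_1) < \infty$, a contradiction. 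Hence $T_0 \le T_1$, so $\min(T_0,T_1) = T_0$ and the claimed lower bound for $f$ holds on $[0,T_0)$. The only point requiring care is this endpoint argument: one must genuinely use that $f$ is of class $C^2$ up to and including $T_1$ whenever $T_1 < T_0$, so that its finiteness there is incompatible with the blow-up of the subsolution $g$ at $T_1$.
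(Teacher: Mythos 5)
Your proposal is correct and follows essentially the same route as the paper: normalize by $\gamma^{\frac{1}{p-1}}$, invoke the explicit subsolution of Lemma \ref{Theorem:2.2} with $\delta_0$, compare via Lemma \ref{Theorem:2.1} (after dividing by $b(t)$), and rule out $T_0>T_1$ by the blow-up of the subsolution at $T_1$. The only difference is that the paper first compares with the shrunken subsolutions $\widetilde g_\rho$ ($\delta_\rho=(1-\rho)\delta_0$) and lets $\rho\to 0$, a precaution you may legitimately skip since the strict inequality $f(0)>\varepsilon_0$ already gives $F(0)>g(0)$ as Lemma \ref{Theorem:2.1} requires.
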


\begin{proof}
Let $\widetilde f = \gamma^{\frac{1}{p-1}} f$
and $\delta_0 = \gamma^{\frac{1}{p-1}} \varepsilon _0$.
Then, $\widetilde f$ satisfies
	\[
		\begin{cases}
		\widetilde f''(t) + b(t) \widetilde f'(t) \geq \widetilde f(t)^p
		\quad \mbox{for}\quad t\in [0,T_0),\\
		\widetilde f(0) > \delta_0,\\
		\widetilde f'(0) \geq A_0 \delta_0.
		\end{cases}
	\]
Let $T_1$ be defined in \eqref{t1}
with $\varepsilon_0 = \delta_0$,
that is,
$T_1$
is the right-hand side of \eqref{t0}.
For $\rho \in [0,1)$,
we put $\delta_\rho  = (1-\rho ) \delta_0$
and define
	\[
	\widetilde g_\rho (t)
	= \delta_\rho 
	\bigg( 1 - \mu (p,b,\beta,A_0) \delta_\rho ^{p-1} B(t)
	\bigg)^{- \frac{2}{p-1}}
	\]
for $t \in \lbrack 0,T_1)$.
Noting
$\delta_0 \in (0, \mu(p,b,\beta,A_0)^{-\frac{1}{p-1}}]$
and applying Lemma \ref{Theorem:2.2},
we see that
$\widetilde g_\rho$
satisfies
	\[
	\begin{cases}
	\widetilde g_\rho ''(t) + b(t) \widetilde g'_\rho (t)
	\leq \widetilde g_\rho (t)^p
	&\ \mbox{for}\quad  t \in \lbrack 0,T_1),\\
	\widetilde g_\rho (0) = \delta_\rho,\\
	\widetilde g_\rho '(0) \leq A_0 \delta_\rho.
	\end{cases}
	\]
We put $T_2 = \min (T_0, T_1)$.
Then, by Lemma \ref{Theorem:2.1},
for any $\rho  \in (0,1)$,
we have $\widetilde f(t) \geq \widetilde g_\rho (t)$ for
$t \in \lbrack 0, T_2 )$.
Noting the continuity of
$\widetilde g_{\rho}$
with respect to $\rho \in [0,1)$
and letting $\rho \to 0$, we see that
$\widetilde f(t) \geq \widetilde g_0(t)$ holds for any $t \in \lbrack 0,T_2)$.

Next, we see that
$T_2 = T_0$.
Indeed, if $T_0 > T_1$, namely $T_2 = T_1$, then
$\tilde{f}(t)$ is defined as a $C^2$ function on the interval $[0,T_1]$.
However, by the definition of
$\widetilde g_{0}$,
we immediately obtain
$\lim_{t \to T_1-0} \widetilde g_{0}(t) = \infty$.
This and the fact
$\widetilde f(t) \geq \widetilde g_0(t)$ for $t \in \lbrack 0,T_1)$
imply $\lim_{t\to T_1-0} \tilde{f}(t) = \infty$,
which contradicts $\tilde{f} \in C^2 ([0,T_1])$.
Consequently, we have $T_2 = T_0$, namely $T_0 \le T_1$
and we complete the proof.
\end{proof}


\section{Proof of the Proposition \ref{Theorem:1.3} and Corollary \ref{Theorem:1.4}}
\begin{proof}[Proof of Proposition \ref{Theorem:1.3}]
Let $u$ be a strong solution of \eqref{eq:1} on $[0,T_0)$
with the lifespan $T_0$.
Let $\phi \in \mathcal{S}(\mathbb{R}^n; [0,\infty))$
satisfy the inequality \eqref{eq:2}.
Recall that
$I_\phi (t) = \int_{\mathbb R^n} u(t,x) \phi^\ell (x) dx$
and
$\Phi(x) = \ell(\ell-1) \nabla \phi  (x) \cdot \nabla \phi  (x) + \ell \phi  (x) \Delta \phi  (x)$.
Then,
by the continuity of $I_{\phi}(t)$ with respect to $t$,
there exists $t_0> 0$ such that
$I_\phi (t) - A(n,p,\ell,\phi ) > 0$ holds for $t \in [0,t_0)$.
By a direct calculation, we have for $t \in [0,t_0)$,
	\begin{align*}
	\frac{d^2}{dt^2} I_\phi  (t) + b(t) \frac{d}{dt} I_\phi  (t)
	&= \int_{\mathbb R^n}
	(\partial_t^2 + b(t) \partial_t) u(t,x) \phi ^\ell(x) dx\\
	&= \int_{\mathbb R^n} u(t,x) \Delta (\phi ^\ell(x)) dx
	+ \| u(t) \phi ^{\frac{\ell}{p}} \|_{L^p(\mathbb R^n)}^p\\
	&= \int_{\mathbb R^n} u(t,x) \Phi(x) \phi ^{\ell-2}(x) dx
	+ \| u(t) \phi ^{\frac{\ell}{p}} \|_{L^p(\mathbb R^n)}^p\\
	&\geq - \| \Phi \phi ^{\frac{\ell}{p'}-2} \|_{L^{p'}(\mathbb R^n)}
	\| u(t) \phi ^{\frac{\ell}{p}} \|_{L^p(\mathbb R^n)}
	+ \| u(t) \phi ^{\frac{\ell}{p}} \|_{L^p(\mathbb R^n)}^p\\
	&\geq - 2^{\frac{p'}{p}} p'^{-1} p^{-\frac{p'}{p}}
	\| \Phi \phi ^{\frac{\ell}{p'}-2} \|_{L^{p'}(\mathbb R^n)}^{p'}
	+ 2^{-1} \| u(t) \phi ^{\frac{\ell}{p}} \|_{L^p(\mathbb R^n)}^p\\
	&\geq - 2^{p'-1} p'^{-1} p^{1-p'}
	\| \Phi^{p'} \phi ^{\ell-2p'} \|_{L^{1}(\mathbb R^n)}
	+ 2^{-1} \| \phi ^{\ell} \|_{L^1(\mathbb R^n)}^{1-p} I_\phi (t)^p\\
	&= 2^{-1} \| \phi ^{\ell} \|_{L^1(\mathbb R^n)}^{1-p}
	\big( I_\phi (t)^p - A(n,p,\ell,\phi )^p \big)\\
	&\geq 2^{-1} \| \phi ^{\ell} \|_{L^1(\mathbb R^n)}^{1-p}
	\big( I_\phi (t) - A(n,p,\ell,\phi ) \big)^p.
	\end{align*}
Here we note that the above inequality holds
as long as
$I_\phi (t) - A(n,p,\ell,\phi ) > 0$.
The above inequality implies that
$J_\phi (t) = I_\phi (t) - A(n,p,\ell,\phi )$
satisfies
	\begin{align}
	\label{Jphi_ode}
	\begin{cases}
	J_\phi ''(t) + b(t) J'_\phi (t)
		\geq 2^{-1} \| \phi ^{\ell} \|_{L^1(\mathbb R^n)}^{1-p} J_{\phi}(t)^p
	&\mbox{for}\quad t \in [0,t_0),\\
	J_\phi (0) = I_\phi (0) - A(n,p,\ell,\phi ),\\
	J_\phi '(0) = A_1 J_\phi (0),\\
	\end{cases}
	\end{align}
where
$A_1 = I'_{\phi}(0)/( I_{\phi}(0) - A(n,p,\ell,\phi))$,
which is a positive constant thanks to the assumption \eqref{eq:2}.
Moreover, by the assumption \eqref{eq:2},
we have
$I_\phi (0) - A(n,p,\ell,\phi ) \leq 2^{\frac{1}{p-1}} \| \phi ^{\ell} \|_{L^1(\mathbb R^n)}$.
Thus, we apply Proposition \ref{Theorem:2.3}
with
$\varepsilon_0 = I_\phi (0) - A(n,p,\ell,\phi )$,
$\gamma = 2^{-1} \| \phi ^{\ell} \|_{L^1(\mathbb R^n)}^{1-p}$
and
$f(t) = J_{\phi}(t)$
to obtain
\begin{align}%
\label{Jphi_est}
	J_{\phi}(t)
	\ge J_{\phi}(0)
		\left( 1
		- \mu (p,b,\beta,A_1)
		\tilde{J}_{\phi}(0)^{p-1}
		B(t) \right)^{- \frac{2}{p-1}}
\end{align}%
for $t \in [0,t_0)$,
where
$\tilde{J}_{\phi}(0) = 2^{-\frac{1}{p-1}} \| \phi^{\ell} \|_{L^1(\mathbb{R}^n)}^{-1} J_{\phi}(0)$.

Next, we show that
$J_{\phi}(t) > 0$
holds for any $t\in [0,T_0)$.
Indeed, if $J_{\phi}(t_{\ast}) = 0$ holds for some $t_{\ast} \in (0, T_0)$
and $J_{\phi}(t) > 0$ holds for $t \in [0,t_{\ast})$,
then, applying the same argument above,
we can prove the estimate \eqref{Jphi_est} for $t \in [0, t_{\ast})$.
However, the right-hand side of \eqref{Jphi_est} remains positive
for $t = t_{\ast}$, which contradicts $J_{\phi}(t_{\ast}) = 0$.
Thus, we have $J_{\phi}(t) > 0$ for any $t\in [0,T_0)$,
and $J_{\phi}(t)$ also satisfies the estimate \eqref{Jphi_est} for $t\in [0,T_0)$.
Hence, Proposition \ref{Theorem:2.3}
with $\delta_0 = \widetilde{J}_{\phi}(0)$ gives the desired estimates for
$J_{\phi}(t)$ and $T_0$.
\end{proof}

\begin{proof}[Proof of Corollary \ref{Theorem:1.4}]
Let $R(\varepsilon)$ be given by \eqref{r}.
Since $n - 2 \frac{p'}{p} = \frac{n(p - p_F)}{p-1}$,
we calculate
	\[
	A(n,p,\ell,\psi _{R(\varepsilon)})
	= A(n,p,\ell,\psi ) R(\varepsilon)^{\frac{n(p - p_F)}{p-1}}
	= \frac{\varepsilon}{4} I_0.
	\]
From this and the assumption \eqref{eq:3}, we obtain
\begin{align*}%
	I_{\psi_{R(\varepsilon)}} - A(n,p,\ell,\psi _{R(\varepsilon)})
	\ge \frac{\varepsilon}{4} I_0.
\end{align*}%
Also, the assumption \eqref{eq:4} immediately implies
$I'_{\psi_{R(\varepsilon)}} \ge \frac{\varepsilon}{2}I_1$.
Finally, the assumption \eqref{eq:41} leads to
\begin{align*}%
	I_{\psi_{R(\varepsilon)}} - A(n,p,\ell,\psi _{R(\varepsilon)})
	&\le \varepsilon I_0 \\
	&\le 2^{\frac{1}{p-1}} \| \psi^{\ell} \|_{L^1(\mathbb{R}^n)} R(\varepsilon)^n \\
	&= 2^{\frac{1}{p-1}} \|\psi_{R(\varepsilon)}^{\ell} \|_{L^1(\mathbb R^n)}.
\end{align*}%
Therefore, the condition \eqref{eq:2} is fulfilled and
Proposition \ref{Theorem:1.3} with $\phi = \psi_{R(\varepsilon)}$ implies that
\begin{align*}%
	J_{\psi_{R(\varepsilon)}}(t)
	\ge J_{\psi_{R(\varepsilon)}}(0)
		\left( 1 - \mu(p,b,\beta, A_1(\varepsilon)) \widetilde{J}_{\psi_{R(\varepsilon)}}(0)^{p-1}
			B(t) \right)
\end{align*}%
and the lifespan $T_0$ is estimated as
\begin{align*}%
	T_0 \le B^{-1}\left( \mu(p,b,\beta,A_1(\varepsilon))^{-1}
		\widetilde{J}_{\psi_{R(\varepsilon)}}(0)^{1-p} \right),
\end{align*}%
where
\begin{align*}%
	A_1(\varepsilon)
	= \frac{I'_{\psi_{R(\varepsilon)}}(0)}{I'_{\psi_{R(\varepsilon)}}(0)-A(n,p,\ell,\psi_{R(\varepsilon)})}.
\end{align*}%
Now, we again use the assumptions \eqref{eq:3} and \eqref{eq:4} to obtain
\begin{align*}%
	A_1(\varepsilon)
		\ge \frac{\frac{\varepsilon}{2}I_1}{\varepsilon I_0 - \frac{\varepsilon}{4}I_0}
		= \frac{2I_1}{3I_0}.
\end{align*}%
Moreover, we calculate
\begin{align*}%
	\widetilde{J}_{\psi_{R(\varepsilon)}}(0)^{p-1}
	&= 2^{-1} \| \psi_{R(\varepsilon)}^{\ell} \|_{L^1(\mathbb{R}^n)}^{-(p-1)}
		J_{\psi_{R(\varepsilon)}}(0)^{p-1} \\
	&\ge 2^{-1} \| \psi^{\ell} \|_{L^1(\mathbb{R}^n)}^{-(p-1)} R(\varepsilon)^{-n(p-1)}
		\left( \frac{\varepsilon}{4} I_0 \right)^{p-1}\\
	&= 2^{-1} \| \psi^{\ell} \|_{L^1(\mathbb{R}^n)}^{-(p-1)}
		A(n,p,\ell, \psi)^{-\frac{(p-1)^2}{p_F-p}}
		\left( \frac{\varepsilon}{4} I_0 \right)^{\frac{1}{\frac{1}{p-1}-\frac{n}{2}}}.
\end{align*}%
Consequently, letting
\begin{align}%
\nonumber
	&\mu_0(n,p,b,\beta,\ell,\psi,I_0,I_1)\\
\label{mu0}
	&= \mu \left(p,b,\beta,\frac{2I_1}{3I_0} \right)2^{-1}
			\| \psi^{\ell} \|_{L^1(\mathbb{R}^n}^{-(p-1)}
			A(n,p,\ell,\psi)^{-\frac{(p-1)^2}{p_F-p}}
			\left( \frac{1}{4}I_0 \right)^{\frac{1}{\frac{1}{p-1}-\frac{n}{2}}},
\end{align}%
we have the desired estimates \eqref{bl-rt} and \eqref{lf_upp2}.
\end{proof}

\section{Proofs of Propositions \ref{prop_low} and \ref{prop_lowcr}}
\subsection{Scaling variables, local existence and spectral decompostion}\ 

We give proofs of Propositions \ref{prop_low} and \ref{prop_lowcr}.
Sections 4.1--4.3 are almost the same as in \cite{Wa17JMAA}
and we present only their outlines.
Following \cite{Wa17JMAA}, we introduce the scaling variables
\begin{align}
\label{sc}
	s = \log (B(t) + 1 ),\quad y = (B(t) + 1 )^{-1/2}x.
\end{align}
Also, we use the notation
$t(s) = B^{-1}(e^s-1)$.
We change the coordinate and the unknown function as
\begin{align}
\label{uvw}
	\begin{array}{l}
	\displaystyle u(t,x) = (B(t)+1)^{-n/2}v(\log(B(t)+1), (B(t)+1)^{-1/2}x),\\[5pt]
	\displaystyle u_t(t,x) = b(t)^{-1}(B(t)+1)^{-n/2-1}w(\log(B(t)+1), (B(t)+1)^{-1/2}x).
	\end{array}
\end{align}
Then, the equation \eqref{eq:1} is transformed into the first order system
\begin{align}
\label{eq_vw}
	\left\{\begin{array}{ll}
	\displaystyle v_s-\frac{y}{2}\cdot \nabla_yv - \frac{n}{2}v = w,&s>0, y\in \mathbb{R}^n,\\[7pt]
	\displaystyle
	\frac{e^{-s}}{b(t(s))^2}\left( w_s-\frac{y}{2}\cdot \nabla_yw -\left(\frac{n}{2}+1\right)w \right)+w
		 = \Delta_yv+r(s,y),&s>0,y\in \mathbb{R}^n,\\[7pt]
	\displaystyle v(0,y) = v_0(y) = \varepsilon a_0(y),\ 
				w(0,y) = w_0(y) = \varepsilon a_1(y),
				&y\in \mathbb{R}^n,
	\end{array}\right.
\end{align}
where
\begin{align}
\label{r}
	r(s,y) &= \frac{b^{\prime}(t(s))}{b(t(s))^2}w
		+ e^{\frac{n}{2}(p_F-p) s} |v|^p.
\end{align}

The local well-posedness for the system \eqref{eq_vw}
was obtained by \cite[Proposition 3.6]{Wa17JMAA}.
In this paper, the solution satisfying certain integral equation
is constructed (mild solution).
Such solution also satisfies the condition of
our strong solution (see Definition \ref{def_sol}).
\begin{Proposition}{\cite[Proposition 3.6]{Wa17JMAA}}\label{prop_loc2}
There exists $S>0$ depending only on
$\| (v_0, w_0) \|_{H^{1,m}\times H^{0,m}}$
(the size of the initial data) such that
the Cauchy problem \eqref{eq_vw} admits a unique strong solution
$(v,w)$ satisfying
\[
	(v,w) \in C([0,S);H^{1,m}(\mathbb{R}^n)\times H^{0,m}(\mathbb{R}^n)).
\]
Also, if
$(u_0, u_1) \in H^{2,m}(\mathbb{R}^n) \times H^{1,m}(\mathbb{R}^n)$,
then the solution $(v,w) $ satisfies
\begin{align}
\label{vwcls2}
	(v,w) \in C([0,S);H^{2,m}(\mathbb{R}^n)\times H^{1,m}(\mathbb{R}^n))
		\cap C^1([0,S); H^{1,m}(\mathbb{R}^n) \times H^{0,m}(\mathbb{R}^n)).
\end{align}
Moreover, for arbitrarily fixed time $S^{\prime}>0$,
we can extend the solution to the interval $[0,S^{\prime}]$
by taking $\varepsilon$ sufficiently small.
Furthermore, if the lifespan
\[
	S_0 = S_0(\varepsilon) = \sup\{
		S\in (0,\infty) ;
		\mbox{there exists a unique strong solution}\ (v,w)\ \mbox{to \eqref{eq_vw}} \}
\]
is finite, then $(v,w)$ satisfies
$\lim_{s \to S_0} \| (v,w)(s) \|_{H^{1,m}\times H^{0,m}} = \infty$.
\end{Proposition}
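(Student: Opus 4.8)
The plan is to recast the transformed system \eqref{eq_vw} as an abstract semilinear evolution equation for $U=(v,w)$ in the Hilbert space $X:=H^{1,m}(\mathbb R^n)\times H^{0,m}(\mathbb R^n)$ and to solve it by a contraction mapping argument. First I would clear the coefficient multiplying the $s$-derivatives in the second equation: setting $\kappa(s):=b(t(s))^2e^s$, which by \eqref{b} and \eqref{b_inv} is continuous and bounded above and below by positive constants on every compact $s$-interval, the system \eqref{eq_vw} takes the form
\begin{align*}
	v_s &= \mathcal A v + w,\\
	w_s &= \mathcal B w - \kappa(s) w + \kappa(s)\Delta_y v + \kappa(s)\,e^{\frac n2(p_F-p)s}|v|^p,
\end{align*}
with the dilation operators $\mathcal A=\tfrac{y}{2}\cdot\nabla_y+\tfrac n2$ and $\mathcal B=\tfrac{y}{2}\cdot\nabla_y+\tfrac n2+1$, where I have absorbed the linear piece $\tfrac{b'(t(s))}{b(t(s))^2}w$ of $r(s,y)$ into the damping term $-\kappa(s)w$. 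Differentiating the first equation in $s$ exhibits $v$ as a solution of a damped-wave-type equation carrying the coefficient $\kappa(s)^{-1}$ in front of $v_{ss}$; since $\kappa(s)^{-1}$ is bounded above and below on compact $s$-intervals, the \emph{local} theory involves no genuine singular perturbation.

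The next step is to show that the linear part $U\mapsto\mathcal L(s)U$ of this first-order system generates a strongly continuous evolution family $\{\mathcal U(s,\sigma)\}_{0\le\sigma\le s}$ on $X$. The operators $\mathcal A,\mathcal B$ act on the polynomial-weighted spaces $H^{j,m}$ as explicit one-parameter dilation groups (this is the structural fact underlying the method of scaling variables of Gallay--Raugel), and the coupling $v\leftrightarrow w$ through $\Delta_y$, together with the damping $-\kappa(s)w$, is controlled by the usual energy identity for damped hyperbolic systems, performed with the weight $(1+|x|^2)^m$; all coefficients being continuous in $s$, the evolution family is produced by a standard approximation scheme. Then I would verify that the nonlinearity $\mathcal N(s,U)=\big(0,\ \kappa(s)\,e^{\frac n2(p_F-p)s}|v|^p\big)$ is locally Lipschitz from $X$ into $X$, uniformly on compact $s$-intervals, by a Moser-type estimate combining the Sobolev embedding $H^1\hookrightarrow L^q$ with the polynomial weight, giving $\big\|\,|v|^p-|\tilde v|^p\,\big\|_{H^{0,m}}\le C(\|v\|_{H^{1,m}},\|\tilde v\|_{H^{1,m}})\,\|v-\tilde v\|_{H^{1,m}}$. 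With these two ingredients the integral equation $U(s)=\mathcal U(s,0)U_0+\int_0^s\mathcal U(s,\sigma)\,\mathcal N(\sigma,U(\sigma))\,d\sigma$ is a contraction on a small ball of $C([0,S];X)$ with $S=S(\|U_0\|_X)$, producing a (mild, hence strong) solution together with its uniqueness and the blow-up alternative. The regularity assertion \eqref{vwcls2} follows by rerunning the fixed point in $H^{2,m}\times H^{1,m}$ and reading the $C^1$-in-$s$ regularity off the equations, and the extension claim follows because on a fixed interval $[0,S']$ the factor $\kappa(s)e^{\frac n2(p_F-p)s}$ is bounded by a constant depending only on $S'$, so a continuation argument combined with Gronwall's inequality keeps the $O(\varepsilon)$ solution alive on all of $[0,S']$ once $\varepsilon$ is small.

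The main obstacle is the construction and boundedness of the linear evolution family on the \emph{weighted} space $X$: the coefficients in $\tfrac{y}{2}\cdot\nabla_y$ are unbounded and do not commute with the weight $(1+|x|^2)^m$, so the commutators $[(1+|x|^2)^{m/2},\tfrac{y}{2}\cdot\nabla_y]$ must be tracked and absorbed into the energy, and the large factor $\kappa(s)$ multiplying $\Delta_y v$ in the $w$-equation must be shown compatible with the weighted hyperbolic energy estimate --- which it is, because $v_s$ already contains $w$, so that after an integration by parts the term $\kappa\,\Delta_y v$ merely reinforces the damping rather than destabilizing the estimate. Once these weighted linear bounds are in place the nonlinear iteration is routine; as the argument is essentially \cite[Proposition 3.6]{Wa17JMAA}, we indicate only its outline.
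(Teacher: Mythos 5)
Note first that the paper does not prove this proposition at all: it is quoted verbatim from \cite[Proposition 3.6]{Wa17JMAA}, and the proof there does \emph{not} attack the transformed system \eqref{eq_vw} head-on. Instead, one first proves local well-posedness for the original problem \eqref{eq:1} in the weighted energy space $H^{1,m}\times H^{0,m}$ by exactly the Duhamel-plus-contraction argument carried out (without weights) in the appendix of the present paper, the weights entering only through weighted energy inequalities for the linear damped wave propagator and a weighted Moser estimate for $|u|^p$; the solution $(v,w)$ of \eqref{eq_vw} is then obtained by pushing $(u,b(t)^{-1}u_t)$ through the explicit change of variables \eqref{sc}, \eqref{uvw}, which is a homeomorphism between $C([0,T);H^{1,m}\times H^{0,m})$ solutions of \eqref{eq:1} and $C([0,S);H^{1,m}\times H^{0,m})$ solutions of \eqref{eq_vw} with $S=\log(B(T)+1)$; uniqueness, the blow-up alternative and the extension statement all transfer along this correspondence. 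Your proposal instead works directly in the $(s,y)$ variables and must therefore construct a non-autonomous evolution family for the operator matrix $\begin{pmatrix}\mathcal A&1\\ \kappa(s)\Delta_y&\mathcal B-\kappa(s)\end{pmatrix}$ on $H^{1,m}\times H^{0,m}$. This is precisely the step the change-of-variables route renders trivial (the linear propagator in scaling variables \emph{is} the conjugate of the ordinary damped-wave propagator), and it is the step your outline leaves essentially unaddressed: ``produced by a standard approximation scheme'' hides the verification of Kato-type stability and domain-regularity conditions for an operator whose first-order part $\tfrac{y}{2}\cdot\nabla_y$ has unbounded coefficients and whose domain mixes $\Delta_y$ with the dilation field. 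Your weighted energy identity (cancellation of the $\langle\nabla_y v,\nabla_y w\rangle$ cross terms, absorbable commutators with $(1+|y|^2)^m$) does give the a priori bound needed for such a family on compact $s$-intervals, so the route is workable, but as written it replaces the easy part of the paper's argument by its hardest. The remaining ingredients --- the locally Lipschitz estimate for $|v|^p$ in $H^{0,m}$ via Sobolev embedding (note $H^{1,m}\not\hookrightarrow L^\infty$ for $n\ge2$, so the H\"older-exponent bookkeeping you sketch is genuinely needed, using $p\le n/(n-2)$), the mild-implies-strong upgrade, and the Gronwall continuation to $[0,S']$ for small $\varepsilon$ --- are correct and match the cited proof.

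If you want to keep your direct approach, the cleanest repair is to observe that the evolution family you need is \emph{explicitly} given by conjugating the (Fourier-multiplier) solution operator of the linear equation $\Box u+b(t)u_t=0$ with the transformation \eqref{uvw}; its boundedness on $H^{1,m}\times H^{0,m}$ then reduces to the weighted energy estimate for the original linear equation, and at that point you have effectively rederived the argument of \cite{Wa17JMAA}.
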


Next, to obtain an a priori estimate for
$(v,w)$, we decompose
$(v,w)$ into the leading terms and the remainder terms.
Let $\alpha(s)$ be
\begin{align}
\label{alpha}
	\alpha(s) = \int_{\mathbb{R}^n}v(s,y)dy,
\end{align}
which is well-defined due to $v(s) \in H^{1, m}$ with $m>n/2$.
We also put
\begin{align*}
	\varphi_0(y) = (4\pi)^{-n/2} \exp \left( -\frac{|y|^2}{4} \right).
\end{align*}
Then, it is easy to see that
\begin{align}
\label{varphi0_int}
	\int_{\mathbb{R}^n}\varphi_0(y) dy = 1
\end{align}
and
\begin{align}
\label{phi_eq}
	\Delta_y \varphi_0 = -\frac{y}{2}\cdot \nabla_y\varphi_0-\frac{n}{2}\varphi_0.
\end{align}
We also put $\psi_0(y) = \Delta_y \varphi_0(y)$
and decompose $v, w$ as
\begin{align}
\label{sp_de_vw}
	\begin{array}{l}
	\displaystyle v(s,y) = \alpha(s) \varphi_0(y) + f(s,y),\\[5pt]
	\displaystyle w(s,y) = \frac{d\alpha}{ds}(s) \varphi_0(y) + \alpha(s)\psi_0(y) + g(s,y),
	\end{array}
\end{align}
where we expect that $(f,g)$ can be regarded as remainder terms.
In order to derive the system that $(f,g)$ satisfies,
we first note the following lemma.
\begin{Lemma}{\cite[Lemma 3.8]{Wa17JMAA}}\label{lem_alpha}
We have
\begin{align}
\label{alpha_dt}
	\frac{d\alpha}{ds}(s) &= \int_{\mathbb{R}^n}w(s,y)dy,\\
\label{alpha_ddt}
	\frac{e^{-s}}{b(t(s))^2}\frac{d^2\alpha}{ds^2}(s)
	&= \frac{e^{-s}}{b(t(s))^2} \frac{d\alpha}{ds}(s)
		- \frac{d\alpha}{ds}(s) + \int_{\mathbb{R}^n}r(s,y)dy,
\end{align}
where $r$ is defined by \eqref{r}.
\end{Lemma}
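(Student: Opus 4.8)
The plan is to derive both identities simply by integrating the two equations of the system \eqref{eq_vw} over $y \in \mathbb{R}^n$; the real content lies in justifying that the integrations by parts have no boundary contribution and that $\frac{d}{ds}$ commutes with $\int_{\mathbb{R}^n} \cdot \, dy$. Here the weighted regularity $m = 1$ ($n=1$), $m > n/2 + 1$ ($n \ge 2$) from Proposition \ref{prop_loc2} is exactly what is needed: for $v(s) \in H^{1,m}$ one has $v(s), \nabla_y v(s) \in L^1(\mathbb{R}^n)$ and, by Cauchy--Schwarz against $(1+|y|)(1+|y|^2)^{-m/2} \in L^2(\mathbb{R}^n)$, also $|y|\, v(s), |y| \nabla_y v(s) \in L^1(\mathbb{R}^n)$, and likewise for $w(s)$; hence $\alpha(s)$ in \eqref{alpha} is well defined, and writing $y \cdot \nabla_y v = \mathrm{div}(yv) - nv$ and $\Delta_y v = \mathrm{div}(\nabla_y v)$ and cutting off before letting the cutoff scale to infinity legitimizes $\int_{\mathbb{R}^n} \frac{y}{2} \cdot \nabla_y v \, dy = -\frac{n}{2} \int_{\mathbb{R}^n} v \, dy$ and $\int_{\mathbb{R}^n} \Delta_y v \, dy = 0$, with the analogous formula for $w$. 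To make $\alpha$ twice differentiable one first takes data in $H^{2,m} \times H^{1,m}$, so that \eqref{vwcls2} gives $w \in C^1([0,S); H^{0,m}) \subset C^1([0,S); L^1)$, and then removes this extra assumption at the very end by a density argument, as in \cite{Wa17JMAA}.

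For \eqref{alpha_dt}: integrating the first equation of \eqref{eq_vw} over $\mathbb{R}^n$ and using $\int_{\mathbb{R}^n} \frac{y}{2} \cdot \nabla_y v \, dy = -\frac{n}{2} \alpha(s)$, the left-hand side collapses to $\frac{d}{ds} \int_{\mathbb{R}^n} v \, dy + \frac{n}{2} \alpha(s) - \frac{n}{2} \alpha(s) = \frac{d\alpha}{ds}(s)$, while the right-hand side equals $\int_{\mathbb{R}^n} w(s,y) \, dy$. For \eqref{alpha_ddt}: integrating the second equation of \eqref{eq_vw} over $\mathbb{R}^n$, using $\int_{\mathbb{R}^n} \Delta_y v \, dy = 0$ and $\int_{\mathbb{R}^n} \frac{y}{2} \cdot \nabla_y w \, dy = -\frac{n}{2} \int_{\mathbb{R}^n} w \, dy$, and then inserting \eqref{alpha_dt} in the two forms $\int_{\mathbb{R}^n} w \, dy = \frac{d\alpha}{ds}$ and $\int_{\mathbb{R}^n} w_s \, dy = \frac{d^2\alpha}{ds^2}$ (the latter obtained by differentiating the former in $s$, justified by $w \in C^1$ in $s$ valued in $L^1$), the integrated equation becomes
\[
	\frac{e^{-s}}{b(t(s))^2} \left( \frac{d^2\alpha}{ds^2}(s) - \frac{d\alpha}{ds}(s) \right) + \frac{d\alpha}{ds}(s) = \int_{\mathbb{R}^n} r(s,y) \, dy ,
\]
which is precisely \eqref{alpha_ddt} after rearranging.

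The main obstacle is therefore not the algebra but the analytic justification: one must secure that $\alpha$ is twice continuously differentiable and that differentiation in $s$ passes under the integral for both $v$ and $w$, and this is exactly where the combination of the weighted Sobolev framework, the higher-regularity class \eqref{vwcls2} for smooth data, and the closing density argument is indispensable.
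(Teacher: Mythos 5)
Your proof is correct and is essentially the argument behind the cited \cite[Lemma 3.8]{Wa17JMAA} (the present paper states the lemma without proof): integrate the two equations of \eqref{eq_vw} in $y$, remove the transport terms via $\frac{y}{2}\cdot\nabla_y v+\frac{n}{2}v=\tfrac12\,\mathrm{div}_y(yv)$ together with a cutoff, use $\int_{\mathbb{R}^n}\Delta_y v\,dy=0$, and identify $\int_{\mathbb{R}^n}w\,dy$ with $\alpha'(s)$. The only blemish is the claim $|y|\,\nabla_y v\in L^1$ in the case $n=1$, $m=1$, where $(1+|y|)(1+|y|^2)^{-1/2}\notin L^2(\mathbb{R})$; this is harmless, since your divergence-form cutoff argument only needs $v,w\in L^1$, which already follows from $m>n/2$.
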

From the system \eqref{eq_vw}, Lemma \ref{lem_alpha}
and the equation \eqref{phi_eq},
we see that $f$ and $g$ satisfy the following system:
\begin{align}
\label{eq_fg}
	\left\{\begin{array}{ll}
	\displaystyle
	f_s - \frac{y}{2}\cdot \nabla_yf-\frac{n}{2}f = g,&s>0, y\in\mathbb{R}^n,\\[5pt]
	\displaystyle
	\frac{e^{-s}}{b(t(s))^2}\left( g_s - \frac{y}{2}\cdot\nabla_y g -\left(\frac{n}{2}+1\right) g \right)
		+ g = \Delta_y f + h,&s>0,y\in\mathbb{R}^n,\\[5pt]
	f(0,y) = v_0(y)-\alpha(0)\varphi_0(y),&y\in\mathbb{R}^n,\\[5pt]
	g(0,y) = w_0(y)-\frac{d\alpha}{ds}(0)\varphi_0(y)-\alpha(0)\psi_0(y),
		&y\in\mathbb{R}^n,
	\end{array}\right.
\end{align}
where $h$ is given by
\begin{align}
\nonumber
	h(s,y) &= \frac{e^{-s}}{b(t(s))^2}
		\left( -2 \frac{d\alpha}{ds}(s) \psi_0(y)
		+\alpha(s)
		\left(\frac{y}{2}\cdot\nabla_y\psi_0(y)
			+\left(\frac{n}{2}+1\right)\psi_0(y) \right) \right)\\
\label{h}
		&\quad + r(s,y) - \left(\int_{\mathbb{R}^n} r(s,y) dy \right) \varphi_0(y).
\end{align}
Moreover, from \eqref{alpha}, \eqref{varphi0_int} and \eqref{alpha_dt}, it follows that
\begin{align}
\label{fg_int}
	\int_{\mathbb{R}^n}f(s,y)dy = \int_{\mathbb{R}^n}g(s,y)dy = 0.
\end{align}
We also notice that the condition \eqref{fg_int} implies
\begin{align}
\label{h_int}
	\int_{\mathbb{R}^n} h(s,y) dy = 0.
\end{align}

\subsection{Energy estimates for $n=1$}\ 

To obtain the decay estimates for $f,g$, we introduce
\begin{align}
\label{1_FG}
	F(s,y) = \int_{-\infty}^y f(s,z)dz,\quad
	G(s,y) = \int_{-\infty}^y g(s,z)dz.
\end{align}
From the following lemma and the condition \eqref{fg_int},
we see that $F,G\in C([0,S); L^2(\mathbb{R}))$. 
\begin{Lemma}[Hardy-type inequality]{\cite[Lemma 3.9]{Wa17JMAA}}\label{lem_hardy}
Let $f=f(y)$ belong to $H^{0,1}(\mathbb{R})$ and satisfy
$\int_{\mathbb{R}}f(y)dy = 0$,
and let $F(y) = \int_{-\infty}^y f(z)dz$.
Then, we have
\begin{align}
\label{hardy}
	\int_{\mathbb{R}}F(y)^2 dy \le 4 \int_{\mathbb{R}}y^2 f(y)^2 dy.
\end{align}
\end{Lemma}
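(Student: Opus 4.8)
The plan is to prove the one-dimensional Hardy inequality $\int_{\mathbb R} F(y)^2\,dy \le 4 \int_{\mathbb R} y^2 f(y)^2\,dy$ for $F(y) = \int_{-\infty}^y f(z)\,dz$ with $f \in H^{0,1}(\mathbb R)$ and $\int_{\mathbb R} f(y)\,dy = 0$, by splitting the line at the origin and reducing to the classical half-line Hardy inequality on each piece. First I would observe that the hypothesis $\int_{\mathbb R} f = 0$ means $F(y) = \int_{-\infty}^y f(z)\,dz = -\int_y^{\infty} f(z)\,dz$, so that $F(-\infty) = F(+\infty) = 0$. The weight $y^2 f(y)^2$ being integrable together with $f \in L^2$ ensures $f \in L^1(\mathbb R)$ (by Cauchy--Schwarz, $\int |f| \le (\int (1+y^2) f^2)^{1/2} (\int (1+y^2)^{-1})^{1/2} < \infty$), so $F$ is well-defined, bounded, and continuous.

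Next I would handle the two half-lines separately. On $[0,\infty)$ write $F(y) = -\int_y^\infty f(z)\,dz$ and apply the standard Hardy inequality $\int_0^\infty \big(\frac{1}{y}\int_0^y \cdots\big)^2$ in its dual/"tail" form: for $G(y) = \int_y^\infty f(z)\,dz$ one has $\int_0^\infty G(y)^2\,dy \le 4\int_0^\infty y^2 f(y)^2\,dy$. This is proved by integration by parts: $\int_0^\infty G(y)^2\,dy = [y G(y)^2]_0^\infty + 2\int_0^\infty y G(y) f(y)\,dy$, where the boundary term vanishes (at $\infty$ because $y G(y)^2 \to 0$ using $G(y) = o(y^{-1/2})$, at $0$ trivially), and then Cauchy--Schwarz gives $\int_0^\infty G(y)^2\,dy \le 2\left(\int_0^\infty G(y)^2\,dy\right)^{1/2}\left(\int_0^\infty y^2 f(y)^2\,dy\right)^{1/2}$, from which the factor $4$ follows. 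On $(-\infty,0]$ write $F(y) = \int_{-\infty}^y f(z)\,dz$ and argue symmetrically (substitute $y \mapsto -y$ to reduce to the half-line case), obtaining $\int_{-\infty}^0 F(y)^2\,dy \le 4\int_{-\infty}^0 y^2 f(y)^2\,dy$. Adding the two estimates yields \eqref{hardy}.

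The main technical obstacle is justifying the vanishing of the boundary terms in the integration by parts and making the formal manipulations rigorous at the level of $H^{0,1}$ regularity rather than smooth compactly supported functions. I would address this by a density argument: approximate $f$ by functions in $C_0^\infty(\mathbb R)$ with $\int f_k = 0$ converging to $f$ in $H^{0,1}(\mathbb R)$ (one can subtract a small corrector supported far out to restore the zero-mean condition), verify the inequality for each $f_k$ where all integrations by parts are legitimate, and pass to the limit using that $F_k \to F$ in $L^2$ (which itself follows from the inequality applied to $f_k - f_j$). With that in place the estimate for general $f$ follows, and the continuity statement $F, G \in C([0,S); L^2(\mathbb R))$ is then immediate from \eqref{hardy} applied to differences $f(s) - f(s')$ together with the continuity of $s \mapsto f(s)$ in $H^{0,1}(\mathbb R)$ guaranteed by Proposition \ref{prop_loc2}.
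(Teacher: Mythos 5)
Your argument is correct and is essentially the standard proof of this weighted Hardy inequality, which is what the cited reference \cite{Wa17JMAA} does: the zero-mean hypothesis lets you write $F(y)=-\int_y^\infty f$ on the right half-line so that $yF(y)^2\to 0$ at $\pm\infty$, and then integration by parts plus Cauchy--Schwarz gives $\int F^2\le 2\bigl(\int F^2\bigr)^{1/2}\bigl(\int y^2f^2\bigr)^{1/2}$, hence the constant $4$. The only point needing care, the a priori finiteness of $\int F^2$ before dividing, is adequately handled by your density (or, equivalently, a truncation) argument.
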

Since $f$ and $g$ satisfy the equation \eqref{eq_fg}, we can show that
$F$ and $G$ satisfy the following system:
\begin{align}
\label{eq_FG}
	\left\{\begin{array}{ll}
	\displaystyle F_s-\frac{y}{2}F_y = G,&s>0,y\in\mathbb{R},\\[5pt]
	\displaystyle
	\frac{e^{-s}}{b(t(s))^2}\left( G_s - \frac{y}{2}G_y -G \right) + G
	= F_{yy} + H,&s>0, y\in \mathbb{R},\\[5pt]
	\displaystyle F(0,y) = \int_{-\infty}^{y}f(0,z)dz,\ 
	G(0,y) = \int_{-\infty}^{y}g(0,z)dz, &y\in \mathbb{R},
	\end{array}\right.
\end{align}
where
\begin{align}
\label{H}
	H(s,y) = \int_{-\infty}^yh(s,z)dz.
\end{align}
We define the following energy.
\begin{align*}
	E_0(s) &= \int_{\mathbb{R}} \left( \frac{1}{2}\left( F_y^2 + \frac{e^{-s}}{b(t(s))^2}G^2 \right)
		+ \frac{1}{2}F^2 + \frac{e^{-s}}{b(t(s))^2} FG \right) dy,\\
	E_1(s) &= \int_{\mathbb{R}} \left( \frac{1}{2} \left( f_y^2 + \frac{e^{-s}}{b(t(s))^2}g^2 \right)
		+ f^2 + 2\frac{e^{-s}}{b(t(s))^2}fg \right)dy,\\
	E_2(s) & = \int_{\mathbb{R}} y^2 \left[ \frac{1}{2} \left( f_y^2 + \frac{e^{-s}}{b(t(s))^2}g^2 \right)
		+ \frac{1}{2} f^2 + \frac{e^{-s}}{b(t(s))^2}fg \right] dy,\\
	E_3(s) &= \frac{1}{2}\frac{e^{-s}}{b(t(s))^2}\left( \frac{d\alpha}{ds}(s) \right)^2
		+ e^{-2\lambda s}\alpha(s)^2,\\
	E_4(s) &= \frac{1}{2}\alpha(s)^2
		+ \frac{e^{-s}}{b(t(s))^2}\alpha(s) \frac{d\alpha}{ds}(s)
\end{align*}
and
\begin{align*}
	E_5(s) = \sum_{j=0}^4 C_j E_j(s),
\end{align*}
where $\lambda$ is a parameter such that
$0 < \lambda \le 1/4$
and $C_j\ (j=0,\ldots, 4)$ are constants such that
$C_2 = C_3 = C_4 =1$ and
$1 \ll C_1 \ll C_0$.
Then, we have the following energy estimates.
\begin{Lemma}{\cite[Lemmas 3.10--3.17]{Wa17JMAA}}\label{lem_en0}
We have
\begin{align*}
	\frac{d}{ds}E_j(s)
	+ \delta_j E_j(s) + L_j(s)
	= R_j(s),
\end{align*}
for $j = 0, \ldots, 4$,
where
$\delta_j= \frac12\ (j=0,1,2)$,
$\delta_3 = 2\lambda$,
$\delta_4 = 0$,
and
\begin{align*}
	L_0(s) &= \int_{\mathbb{R}}\left( \frac{1}{2}F_y^2 + G^2 \right)dy,\\
	L_1(s) &= \int_{\mathbb{R}}\left( f_y^2 + g^2 \right) dy
			- \int_{\mathbb{R}}f^2 dy,\\
	L_2(s) &= \int_{\mathbb{R}}y^2 \left( \frac{1}{2}f_y^2 + g^2 \right)dy
		+ 2\int_{\mathbb{R}}y f_y \left( f+ g \right) dy,\\
	L_3(s) &= \left( \frac{d\alpha}{ds}(s) \right)^2,\\
	L_4(s) &= 0
\end{align*}
and
\begin{align*}
	R_0(s) &= \frac{3}{2}\frac{e^{-s}}{b(t(s))^2}\int_{\mathbb{R}}G^2 dy 
		- \frac{b^{\prime}(t(s))}{b(t(s))^2}\int_{\mathbb{R}}\left( G^2 + 2FG \right)dy
		+ \int_{\mathbb{R}} (F+G)H dy,\\
	R_1(s) &= 3\frac{e^{-s}}{b(t(s))^2}\int_{\mathbb{R}}g^2 dy
		+ 2\frac{e^{-s}}{b(t(s))^2}\int_{\mathbb{R}}fg dy
		- \frac{b^{\prime}(t(s))}{b(t(s))^2}\int_{\mathbb{R}}(g^2+4fg)dy \\
		&\quad + \int_{\mathbb{R}} \left(2f+g\right) h dy,\\
	R_2(s) &= \frac{3}{2}\frac{e^{-s}}{b(t(s))^2}\int_{\mathbb{R}}y^2  g^2 dy
		-\frac{b^{\prime}(t(s))}{b(t(s))^2}\int_{\mathbb{R}}y^2 (2f+g)g dy
		+\int_{\mathbb{R}}y^2 (f+g)hdy,\\
	R_3(s) &=  \frac{1}{2}(2\lambda +1 ) \frac{e^{-s}}{b(t(s))^2}\left( \frac{d\alpha}{ds}(s) \right)^2
			- \frac{b^{\prime}(t(s))}{b(t(s))^2}\left( \frac{d\alpha}{ds}(s) \right)^2 \\
		&\quad + \frac{d\alpha}{ds}(s) \left( \int_{\mathbb{R}^n} r(s,y) dy \right)
		+ 2 e^{-2 \lambda s} \alpha(s) \frac{d\alpha}{ds}(s),\\
	R_4(s) &= \frac{e^{-s}}{b(t(s))^2}\left( \frac{d\alpha}{ds}(s) \right)^2
		- 2 \frac{b^{\prime}(t(s))}{b(t(s))^2}\alpha(s) \frac{d\alpha}{ds}(s)
		+\alpha(s) \left( \int_{\mathbb{R}^n} r(s,y)dy \right).
\end{align*}
Moreover, we have
\begin{align*}
	\frac{d}{ds}E_5(s) + 2\lambda \sum_{j=0}^3 C_j E_j(s) + L_5(s) = R_5(s),
\end{align*}
where
\begin{align*}
	L_5(s) &= \sum_{j=0}^2 \left[ \left( \frac{1}{2} - 2 \lambda \right) C_j E_j(s)
			+ C_j L_j(s) \right]
			+ C_3 L_3(s)
\end{align*}
and
\begin{align*}
	R_5(s) = \sum_{j=0}^4 C_j R_j(s).
\end{align*}
Furthermore, there exist
$C_0 > C_1 > 1$ and $s_0 > 0$ such that
\begin{align*}
	&\| f(s) \|_{H^{1,1}}^2 + \| g(s) \|_{H^{0,1}}^2 + \left( \frac{d\alpha}{ds}(s) \right)^2
	\le C L_5(s),\\
	&\| f(s) \|_{H^{1,1}}^2 + \frac{e^{-s}}{b(t(s))^2} \| g(s) \|_{H^{0,1}}^2
		+ \alpha(s)^2 +  \frac{e^{-s}}{b(t(s))^2} \left( \frac{d\alpha}{ds}(s) \right)^2
	\le C E_5(s)
\end{align*}
and
\begin{align*}
	|R_5(s)| &\le \frac12 L_5(s)
		+ C e^{-\frac{1-\beta}{1+\beta} s} E_5(s)
		+ C e^{n(p_F-p)s} E_5(s)^p
		+ C e^{\frac{n}{2}(p_F-p) s} E_5(s)^{\frac{p+1}{2}}
\end{align*}
are valid for $s \ge s_0$.
\end{Lemma}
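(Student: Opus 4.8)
The plan is to recover the seven relations by testing each evolution equation against its natural multiplier, and then to assemble $E_5$ and extract from it the coercivity and the remainder bounds; since this is the $n=1$ specialization of the scheme of \cite{Wa17JMAA}, I only indicate the structure. For $E_0$ I would differentiate in $s$, substitute $F_s,G_s$ from \eqref{eq_FG}, and integrate the transport terms $-\tfrac{y}{2}F_y$, $-\tfrac{y}{2}G_y$ by parts in $y$ (legitimate since $F,G\in L^2$ by Lemma \ref{lem_hardy}); this produces the dissipation rate $\delta_0=\tfrac12$ and the good quantities $\tfrac12\int F_y^2$, $\int G^2$ in $L_0$, while the zeroth-order term $G$ on the left of the second equation of \eqref{eq_FG} yields the $-\int(G^2+2FG)$-type contributions, and the coefficients $\tfrac{e^{-s}}{b(t(s))^2}$, $\tfrac{b'(t(s))}{b(t(s))^2}$ together with $\int(F+G)H$ are collected into $R_0$. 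The same bookkeeping applied to \eqref{eq_fg} with weights $1$ and $y^2$ gives the identities for $E_1$, $E_2$, the only new feature being the commutator $2\int y f_y(f+g)$ in $L_2$ produced by the weight $y^2$; and for $E_3$, $E_4$ one multiplies \eqref{alpha_ddt} by $\tfrac{d\alpha}{ds}$ and by $\alpha$ and adds the auxiliary terms built into $E_3$, $E_4$, reproducing $L_3,L_4,R_3,R_4$. Summing $\sum_{j=0}^4 C_jE_j$ and, for $j\le2$, moving the surplus $(\tfrac12-2\lambda)C_jE_j\ge0$ (nonnegative as $\lambda\le\tfrac14$) into the dissipation yields the $E_5$ identity; $E_4$ contributes no dissipation, which is why it is weighted by $C_4=1$ and sits outside $\sum_{j=0}^3$.

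For the two coercivity estimates I would use the Hardy inequality \eqref{hardy} to dominate $\int F^2,\int G^2$ by $\int y^2f^2,\int y^2g^2$ and hence by $E_2$; then, having fixed $C_2=C_3=C_4=1$, I choose first $C_1\gg1$ so that $C_1L_1$ absorbs the indefinite $-\int f^2$ of $L_1$ and the commutator of $L_2$ via Young's inequality, and then $C_0\gg C_1$ so that $C_0L_0$ controls the rest; this gives $\|f\|_{H^{1,1}}^2+\|g\|_{H^{0,1}}^2+(\tfrac{d\alpha}{ds})^2\lesssim L_5$. For the bound by $E_5$ one observes that for $s\ge s_0$ the factor $\tfrac{e^{-s}}{b(t(s))^2}$ is small, so the cross terms in $E_5$ are a harmless perturbation of the positive quadratic form $\tfrac12(\|f\|_{H^{1,1}}^2+\|g\|_{H^{0,1}}^2)+\alpha^2+\tfrac{e^{-s}}{b(t(s))^2}(\tfrac{d\alpha}{ds})^2$.

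The genuinely delicate step, which I expect to be the main obstacle, is the estimate of $R_5$. Each $R_j$ is a sum of three kinds of terms. First, factors $\tfrac{e^{-s}}{b(t(s))^2}$ or $\tfrac{b'(t(s))}{b(t(s))^2}$ against quadratic energy quantities: using \eqref{b} and $e^s=B(t)+1$ one checks $\tfrac{e^{-s}}{b(t(s))^2}\lesssim e^{-\frac{1-\beta}{1+\beta}s}$ and $|\tfrac{b'(t(s))}{b(t(s))^2}|\lesssim e^{-\frac{1-\beta}{1+\beta}s}$, so these contribute $Ce^{-\frac{1-\beta}{1+\beta}s}E_5$. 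Second, the inhomogeneous terms $\int(F+G)H$, $\int(2f+g)h$, $\int y^2(f+g)h$, $\tfrac{d\alpha}{ds}\int r$ and $\alpha\int r$, where $h$ and its primitive $H$ are split by \eqref{h}, \eqref{r} into a part carrying the decaying coefficients above (again $Ce^{-\frac{1-\beta}{1+\beta}s}E_5$ after Cauchy--Schwarz and \eqref{hardy}, the mean-zero subtraction against $\varphi_0$ being harmless by \eqref{h_int}) plus the nonlinear part $e^{\frac{n}{2}(p_F-p)s}|v|^p$. Third, the nonlinear contributions themselves, which in $n=1$ are controlled by Sobolev/Gagliardo--Nirenberg, $\||v|^p\|_{L^2}\lesssim\|v\|_{H^1}^p\lesssim E_5^{p/2}$ through \eqref{sp_de_vw}, yielding $Ce^{n(p_F-p)s}E_5^p$ from the squared nonlinearity in $R_1,R_2$ and $Ce^{\frac{n}{2}(p_F-p)s}E_5^{(p+1)/2}$ from the terms linear in the nonlinearity such as $\tfrac{d\alpha}{ds}\int r$. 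The one subtle point is that part of $\int(F+G)H$ and $\int(2f+g)h$ is linear in $g$ with an $O(1)$, non-decaying coefficient; this must be handled by Young's inequality so that the resulting $\int g^2$ lands in $\tfrac12 L_5$ rather than in $E_5$, and this is exactly why the asserted bound for $|R_5|$ carries the term $\tfrac12 L_5(s)$. Collecting the three types of terms and choosing $s_0$ so that the error coefficients are small for $s\ge s_0$ gives the stated inequalities.
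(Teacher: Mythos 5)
Your outline is correct and follows essentially the same route as the source of this lemma: the paper itself gives no proof but imports it verbatim from \cite{Wa17JMAA}, where the identities are obtained exactly as you describe (differentiate each $E_j$, substitute the systems \eqref{eq_FG}, \eqref{eq_fg}, \eqref{alpha_ddt}, integrate the transport terms by parts), the coercivity comes from the Hardy inequality together with the hierarchy $1\ll C_1\ll C_0$ (note it is $C_0L_0\ni\frac{C_0}{2}\int F_y^2=\frac{C_0}{2}\int f^2$ that absorbs the $-C_1\int f^2$ from $C_1L_1$, while $C_1\int(f_y^2+g^2)$ absorbs the commutator of $L_2$), and the $R_5$ bound is assembled from exactly the three types of terms you list, with the $\tfrac12 L_5$ term arising precisely because $\int g^2$ and $\int G^2$ are controlled by $L_5$ but not uniformly by $E_5$. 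I see no gap.
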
%

\subsection{Energy estimates for $n \ge 2$}\ 

When $n\ge 2$, we cannot consider primitives.
Instead of them, we define
\begin{align*}
	\hat{F}(s,\xi) = |\xi|^{-n/2-\delta}\hat{f}(s,\xi),\quad
	\hat{G}(s,\xi) = |\xi|^{-n/2-\delta}\hat{g}(s,\xi),\quad
	\hat{H}(s,\xi) = |\xi|^{-n/2-\delta}\hat{h}(s,\xi),
\end{align*}
where
$0<\delta<1$,
and
$\hat{f}(s,\xi)$ denotes the Fourier transform of $f(s,y)$ with respect to
the space variable.

By virtue of the cancelation conditions \eqref{fg_int}, \eqref{h_int},
$\hat{F}, \hat{G}, \hat{H}$ make sense as $L^2$-functions:
\begin{Lemma}{\cite[Lemma 3.11]{Wa17JMAA}}\label{lem_hardy2}
Let $m>n/2+1$ and $f(y) \in H^{0,m}(\mathbb{R}^n)$ be a function satisfying
$\hat{f}(0) = (2\pi)^{-n/2} \int_{\mathbb{R}^n}f(y)dy = 0$.
Let
$\hat{F}(\xi) = |\xi|^{-n/2-\delta}\hat{f}(\xi)$
with some $0<\delta<1$.
Then,
there exists a constant $C(n,m,\delta)>0$ such that
\begin{align}
\label{hardy2}
	\| F \|_{L^2} \le C(n,m,\delta) \| f \|_{H^{0,m}}
\end{align}
holds.
\end{Lemma}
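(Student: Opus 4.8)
The plan is to pass to the Fourier side, where Plancherel's theorem gives $\|F\|_{L^2}^2 = \int_{\mathbb{R}^n} |\xi|^{-n-2\delta}|\hat f(\xi)|^2\,d\xi$, and then to split this integral according to whether $|\xi|\le 1$ or $|\xi|>1$. The high-frequency part is immediate: there $|\xi|^{-n-2\delta}\le 1$, so
\[
	\int_{|\xi|>1} |\xi|^{-n-2\delta}|\hat f(\xi)|^2\,d\xi
	\le \|\hat f\|_{L^2}^2 = \|f\|_{L^2}^2 \le \|f\|_{H^{0,m}}^2,
\]
and no cancellation is needed there. Thus the whole point is the region $\{|\xi|\le 1\}$, where the weight $|\xi|^{-n-2\delta}$ fails to be locally integrable and must be compensated by the vanishing $\hat f(0)=0$.

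To handle the low frequencies, the first step is to observe that the hypothesis $m>n/2+1$ makes both $f$ and $|y|f$ integrable: by the Cauchy--Schwarz inequality,
\[
	\|f\|_{L^1} + \| |y| f\|_{L^1}
	\le \left( \left\| (1+|y|^2)^{-m/2} \right\|_{L^2} + \left\| |y|(1+|y|^2)^{-m/2} \right\|_{L^2} \right) \|f\|_{H^{0,m}},
\]
and the two weighted $L^2$-norms on the right are finite exactly because $2m>n$ and $2m>n+2$, respectively. Since $f\in L^1$ and $yf\in L^1$, differentiation under the integral sign shows that $\hat f$ is of class $C^1$ on $\mathbb{R}^n$ with $|\nabla \hat f(\xi)| \le C(n)\||y|f\|_{L^1} \le C(n,m)\|f\|_{H^{0,m}}$ for every $\xi$. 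Combining this with $\hat f(0)=0$ and integrating $\nabla\hat f$ along the segment joining $0$ to $\xi$ yields the pointwise bound $|\hat f(\xi)| = |\hat f(\xi)-\hat f(0)| \le |\xi|\,\|\nabla\hat f\|_{L^\infty} \le C(n,m)|\xi|\,\|f\|_{H^{0,m}}$.

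The final step is to insert this estimate into the low-frequency integral:
\[
	\int_{|\xi|\le 1} |\xi|^{-n-2\delta}|\hat f(\xi)|^2\,d\xi
	\le C(n,m)\|f\|_{H^{0,m}}^2 \int_{|\xi|\le 1} |\xi|^{2-n-2\delta}\,d\xi,
\]
and here the remaining integral is finite precisely because $2-n-2\delta > -n$, i.e. because $\delta<1$. Adding the two regions gives \eqref{hardy2} with a constant $C(n,m,\delta)$. The only genuinely substantive point is the passage from $f\in H^{0,m}$ to the $C^1$-regularity of $\hat f$ together with the quantitative gradient bound; once that is in hand, everything else reduces to elementary one-line estimates, and the threshold $m>n/2+1$ and the restriction $\delta<1$ turn out to be exactly what the two regions require. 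This reproduces the argument of \cite[Lemma 3.11]{Wa17JMAA}.
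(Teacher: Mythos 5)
Your proof is correct: the Plancherel identity, the low/high frequency splitting, and the Lipschitz bound $|\hat f(\xi)|\le C|\xi|\,\|f\|_{H^{0,m}}$ near the origin (using $\hat f(0)=0$ and $|y|f\in L^1$, which is exactly where $m>n/2+1$ enters) together with the integrability of $|\xi|^{1-2\delta}$ on $(0,1)$ (where $\delta<1$ enters) give the claim. The paper itself only cites this lemma from \cite{Wa17JMAA} without reproducing the proof, and your argument is the standard one used there, so there is nothing further to compare.
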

We also notice that
$\| f \|_{L^2}$
can be controlled by the terms
$\| \nabla f \|_{L^2}$ and $\| \nabla F \|_{L^2}$,
which come from the diffusion.
\begin{Lemma}{\cite[(3.39)]{Wa17JMAA}}\label{lem_f}
In addition to the assumptions in Lemma \ref{lem_hardy2},
we further assume $f \in H^{1}(\mathbb{R}^n)$.
Then, for any small $\eta > 0$, there exists a constant $C>0$ such that
we have
\begin{align*}
	\| f \|_{L^2}^2 \le \eta \| \nabla f \|_{L^2}^2 + C \| \nabla F \|_{L^2}^2
\end{align*}
holds.
\end{Lemma}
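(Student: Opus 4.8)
The plan is to carry out the whole estimate on the Fourier side via Plancherel's identity, splitting frequency space into a low- and a high-frequency region. With the (symmetric) Fourier convention fixed above, Plancherel's identity is an isometry, so $\|f\|_{L^2}^2 = \|\widehat f\|_{L^2}^2$, $\|\nabla f\|_{L^2}^2 = \int_{\mathbb{R}^n}|\xi|^2 |\widehat f(\xi)|^2\,d\xi$, and, since $\widehat{\nabla F}(\xi) = i\xi\,|\xi|^{-n/2-\delta}\widehat f(\xi)$,
\[
	\|\nabla F\|_{L^2}^2 = \int_{\mathbb{R}^n} |\xi|^{2-n-2\delta}|\widehat f(\xi)|^2\,d\xi .
\]
If this quantity is infinite there is nothing to prove, so assume it is finite. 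I fix a threshold $\rho>0$, to be chosen at the end, and write $\|f\|_{L^2}^2 = \int_{|\xi|\le\rho}|\widehat f(\xi)|^2\,d\xi + \int_{|\xi|>\rho}|\widehat f(\xi)|^2\,d\xi$.

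On the high-frequency piece I bound $|\widehat f(\xi)|^2 \le \rho^{-2}|\xi|^2|\widehat f(\xi)|^2$, which gives $\int_{|\xi|>\rho}|\widehat f(\xi)|^2\,d\xi \le \rho^{-2}\|\nabla f\|_{L^2}^2$. On the low-frequency piece I use that in the range treated in this subsection, $n\ge 2$, the exponent $n-2+2\delta$ is strictly positive, so on $\{|\xi|\le\rho\}$ one has
\[
	|\widehat f(\xi)|^2 = |\xi|^{n-2+2\delta}\cdot|\xi|^{2-n-2\delta}|\widehat f(\xi)|^2 \le \rho^{n-2+2\delta}\,|\xi|^{2-n-2\delta}|\widehat f(\xi)|^2 ,
\]
hence $\int_{|\xi|\le\rho}|\widehat f(\xi)|^2\,d\xi \le \rho^{n-2+2\delta}\|\nabla F\|_{L^2}^2$. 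Adding the two estimates and choosing $\rho=\eta^{-1/2}$ yields $\|f\|_{L^2}^2 \le \eta\|\nabla f\|_{L^2}^2 + \eta^{-(n-2)/2-\delta}\|\nabla F\|_{L^2}^2$, which is the assertion with $C = \eta^{-(n-2)/2-\delta}$, depending only on $n$, $\delta$ and $\eta$.

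There is no substantial obstacle here; the only point requiring a word of care is the well-definedness of $\|\nabla F\|_{L^2}$, i.e.\ the integrability of $|\xi|^{2-n-2\delta}|\widehat f(\xi)|^2$ near $\xi=0$. Since $m>n/2+1$ forces $|x|f\in L^1(\mathbb{R}^n)$, the function $\widehat f$ is $C^1$, and $\widehat f(0)=0$ gives $|\widehat f(\xi)|\le C|\xi|$ near the origin; thus the integrand is $O(|\xi|^{4-n-2\delta})$ there, which is locally integrable for $0<\delta<1$, while away from the origin it is dominated by a constant times $\|f\|_{L^2}^2$. This is precisely the regularity already exploited in Lemma \ref{lem_hardy2}, so no new ingredient is needed; the only thing to track carefully is the sign of the exponent $n-2+2\delta$ in the low-frequency split, which is exactly where the restriction $n\ge 2$ is used.
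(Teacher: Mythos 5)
Your proof is correct and is essentially the standard (and, as far as this paper is concerned, the intended) argument: the statement is only quoted here from \cite[(3.39)]{Wa17JMAA}, and the Plancherel identity combined with a low/high frequency splitting at a threshold $\rho=\eta^{-1/2}$, using $|\xi|^{n-2+2\delta}\le\rho^{n-2+2\delta}$ on the low-frequency ball (valid since $n\ge2$ and $\delta>0$), is exactly how such an interpolation inequality between $\|\nabla f\|_{L^2}$ and $\|\nabla F\|_{L^2}$ is obtained. Your side remarks on the finiteness of $\|\nabla F\|_{L^2}$ via $\hat f(0)=0$ and $|x|f\in L^1$ are also accurate.
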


In this case
$\hat{F}$ and $\hat{G}$ satisfy the following system.
\begin{align*}
	\left\{ \begin{array}{ll}
	\displaystyle \hat{F}_s + \frac{\xi}{2}\cdot \nabla_{\xi}\hat{F}
		+\frac{1}{2}\left( \frac{n}{2} + \delta \right) \hat{F} = \hat{G},
	&s>0, \xi \in \mathbb{R}^n,\\
	\displaystyle \frac{e^{-s}}{b(t(s))^2}\left( \hat{G}_s + \frac{\xi}{2}\cdot \nabla_{\xi} \hat{G}
		+ \frac{1}{2} \left( \frac{n}{2}+\delta-2 \right) \hat{G} \right) + \hat{G}
		= -|\xi|^2 \hat{F} + \hat{H},
	&s>0, \xi\in\mathbb{R}^n.
	\end{array} \right.
\end{align*}

We define the following energy
\begin{align*}
	E_0(s) &= {\rm Re} \int_{\mathbb{R}^n}
		\left( \frac{1}{2}\left( |\xi|^2 |\hat{F}|^2 + \frac{e^{-s}}{b(t(s))^2} |\hat{G}|^2 \right)
			+ \frac{1}{2} |\hat{F}|^2 + \frac{e^{-s}}{b(t(s))^2}\hat{F} \bar{\hat{G}}  \right) d\xi,\\
	E_1(s) &= \int_{\mathbb{R}^n}
		\left( \frac{1}{2}\left( |\nabla_y f |^2 + \frac{e^{-s}}{b(t(s))^2}g^2 \right)
		+ \left( \frac{n}{4} + 1 \right)
			\left( \frac{1}{2} f^2 + \frac{e^{-s}}{b(t(s))^2}fg \right) \right) dy,\\
	E_2(s) &= \int_{\mathbb{R}^n}
		|y|^{2m} \left[
			\frac{1}{2}\left( |\nabla_y f|^2 + \frac{e^{-s}}{b(t(s))^2}g^2 \right)
				+\frac{1}{2}f^2 + \frac{e^{-s}}{b(t(s))^2}fg \right] dy,\\
	E_3(s) &= \frac{1}{2}\frac{e^{-s}}{b(t(s))^2}\left( \frac{d\alpha}{ds}(s) \right)^2
		+ e^{-2\lambda s}\alpha(s)^2,\\
	E_4(s) &= \frac{1}{2}\alpha(s)^2
		+ \frac{e^{-s}}{b(t(s))^2}\alpha(s) \frac{d\alpha}{ds}(s)
\end{align*}
and
\begin{align*}
	E_5(s) = \sum_{j=0}^4 C_j E_j(s),
\end{align*}
where $\lambda$ is a parameter such that
$0 < \lambda < \min\{ \frac12, \frac{m}{2}-\frac{n}{4} \}$
and $C_j\ (j=0,\ldots, 4)$ are constants such that
$C_2 = C_3 = C_4 =1$ and
$1 \ll C_1 \ll C_0$.
Then, we have the following energy estimates.
\begin{Lemma}{\cite[Lemmas 3.12--3.17]{Wa17JMAA}}\label{lem_en2}
We have
\begin{align*}
	\frac{d}{ds}E_j(s)
	+\delta_j E_j(s) + L_j(s)
	= R_j(s),
\end{align*}
for $j = 0, \ldots, 4$,
where
$\delta_0 = \delta_1 = \delta$,
$\delta_2 = m -\frac{n}{2} -\eta$,
$\delta_3 = 2 \lambda$,
$\delta_4 = 0$,
and
$\eta$ is a small parameter such that
$0<\eta < m-\frac{n}{2}$,
and
\begin{align*}
	L_0(s) &= \frac{1}{2} \int_{\mathbb{R}^n} |\xi|^2 |\hat{F}|^2 d\xi
		+\int_{\mathbb{R}^n} |\hat{G}|^2 d\xi,\\
	L_1(s) &= \frac{1}{2}(1-\delta) \int_{\mathbb{R}^n}|\nabla_y f|^2 dy
		+ \int_{\mathbb{R}^n} g^2 dy
		- \left( \frac{n}{4}+\frac{\delta}{2} \right)
			\left(\frac{n}{4}+1\right) \int_{\mathbb{R}^n}f^2 dy,\\
	L_2(s) &= \frac{\eta}{2}\int_{\mathbb{R}^n}|y|^{2m} f^2 dy
	 + \frac{1}{2} ( \eta + 1) \int_{\mathbb{R}^n} |y|^{2m} |\nabla_y f|^2 dy
	 + \int_{\mathbb{R}^n} |y|^{2m} g^2 dy\\
	 &\quad + 2m \int_{\mathbb{R}^n}|y|^{2m-2} (y\cdot \nabla_y f)(f+g) dy,\\
	L_3(s) &= \left( \frac{d\alpha}{ds}(s) \right)^2,\\
	L_4(s) &= 0
\end{align*}
and
\begin{align*}
	R_0(s) &= \frac{3}{2} \frac{e^{-s}}{b(t(s))^2}\int_{\mathbb{R}^n} |\hat{G}|^2d\xi
	- \frac{b^{\prime}(t(s))}{b(t(s))^2}
		{\rm Re}\int_{\mathbb{R}^n} \left( 2\hat{F} +\hat{G} \right) \bar{\hat{G}} d\xi \\
		&\quad + {\rm Re} \int_{\mathbb{R}^n} \left( \hat{F} + \hat{G} \right) \bar{\hat{H}} d\xi,\\
	R_1(s) &= \left( \frac{n}{2}+\delta \right)\left( \frac{n}{4}+1 \right)
			\frac{e^{-s}}{b(t(s))^2}\int_{\mathbb{R}^n} fg dy
		+ \frac{1}{2}(n+3+\delta) \frac{e^{-s}}{b(t(s))^2} \int_{\mathbb{R}^n} g^2 dy \\
	&\quad
		- \frac{b^{\prime}(t(s))}{b(t(s))^2}
			\int_{\mathbb{R}^n} \left( 2 \left(\frac{n}{4}+1\right) f + g \right) g dy
		+ \int_{\mathbb{R}^n} \left( \left( \frac{n}{4} + 1 \right) f + g \right) h dy,\\
	R_2(s) &= -\eta \frac{e^{-s}}{b(t(s))^2} \int_{\mathbb{R}^n} |y|^{2m} f g dy
		- \frac{1}{2}(\eta - 3) \frac{e^{-s}}{b(t(s))^2} \int_{\mathbb{R}^n} |y|^{2m} g^2 dy\\
	&\quad - \frac{b^{\prime}(t(s))}{b(t(s))^2} \int_{\mathbb{R}^n}|y|^{2m} (2f +g) g dy
		+ \int_{\mathbb{R}^n} |y|^{2m} (f+g) h dy,\\
	R_3(s) &=  \frac{1}{2}(2\lambda +1 ) \frac{e^{-s}}{b(t(s))^2}\left( \frac{d\alpha}{ds}(s) \right)^2
			- \frac{b^{\prime}(t(s))}{b(t(s))^2}\left( \frac{d\alpha}{ds}(s) \right)^2 \\
		&\quad + \frac{d\alpha}{ds}(s) \left( \int_{\mathbb{R}^n} r(s,y) dy \right)
		+ 2 e^{-2 \lambda s} \alpha(s) \frac{d\alpha}{ds}(s),\\
	R_4(s) &= \frac{e^{-s}}{b(t(s))^2}\left( \frac{d\alpha}{ds}(s) \right)^2
		- 2 \frac{b^{\prime}(t(s))}{b(t(s))^2}\alpha(s) \frac{d\alpha}{ds}(s)
		+\alpha(s) \left( \int_{\mathbb{R}^n} r(s,y)dy \right).
\end{align*}
Moreover, we have
\begin{align*}
	\frac{d}{ds}E_5(s) + 2\lambda \sum_{j=0}^3 C_j E_j(s) + L_5(s) = R_5(s),
\end{align*}
where
\begin{align*}
	L_5(s) &= C_0 (\delta-2\lambda) E_0(s)
		+ C_1 (\delta-2\lambda) E_1(s) + (m-\frac{n}{2}-\eta-2\lambda) E_2(s) \\
		&\quad + \sum_{j=0}^4 C_j L_j(s)
\end{align*}
and
\begin{align*}
	R_5(s) = \sum_{j=0}^4 C_j R_j(s).
\end{align*}
Furthermore, there exist
$C_0 > C_1 > 1$ and $s_0 > 0$ such that
\begin{align*}
	&\| f(s) \|_{H^{1,m}}^2 + \| g(s) \|_{H^{0,m}}^2 + \left( \frac{d\alpha}{ds}(s) \right)^2
	\le C L_5(s),\\
	&\| f(s) \|_{H^{1,m}}^2 + \frac{e^{-s}}{b(t)^2} \| g(s) \|_{H^{0,m}}^2
		+ \alpha(s)^2 +  \frac{e^{-s}}{b(t)^2} \left( \frac{d\alpha}{ds}(s) \right)^2
	\le C E_5(s)
\end{align*}
and
\begin{align*}
	|R_5(s)| &\le \frac12 L_5(s)
		+ C e^{-\frac{1-\beta}{1+\beta} s} E_5(s)
		+ C e^{n(p_F-p)s} E_5(s)^p
		+ C e^{\frac{n}{2}(p_F-p) s} E_5(s)^{\frac{p+1}{2}}
\end{align*}
are valid for $s \ge s_0$.
\end{Lemma}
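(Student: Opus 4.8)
The plan is to reproduce the scheme of \cite[Lemmas 3.12--3.17]{Wa17JMAA}: first derive six separate differential identities, one for each $E_j(s)$, and then combine them with a carefully chosen hierarchy of constants. To obtain the $j$-th identity I would differentiate $E_j$ in $s$ and substitute the relevant evolution law — for $E_0$ the Fourier-side system for $(\hat{F},\hat{G})$, tested against the natural combinations $|\xi|^2\bar{\hat{F}}$, $\bar{\hat{F}}$, $\bar{\hat{G}}$; for $E_1$ and $E_2$ the system \eqref{eq_fg} for $(f,g)$ tested against $f$, $g$, $f+g$ with the weights $1$ and $|y|^{2m}$; and for $E_3$, $E_4$ the scalar identities \eqref{alpha_dt}--\eqref{alpha_ddt} of Lemma \ref{lem_alpha}. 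Integration by parts in $y$ (resp. $\xi$) turns the transport terms $\frac{y}{2}\cdot\nabla_y$ into the dissipation coefficients $\delta_j$ — for $E_2$ one absorbs the cross term $2m\int|y|^{2m-2}(y\cdot\nabla_y f)(f+g)\,dy$ by a Young inequality with loss $\eta$, which is where $\delta_2=m-\frac n2-\eta$ comes from (and which forces $\eta<m-\frac n2$ so that $\delta_2>0$) — and the result splits into the dissipative block $\delta_jE_j+L_j$ and a remainder $R_j$ collecting everything carrying the small factors $\frac{e^{-s}}{b(t(s))^2}$ and $\frac{b'(t(s))}{b(t(s))^2}$, the pairings against $h$, $H$, $\hat{H}$, and the couplings with $\int r\,dy$. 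This part is pure bookkeeping.

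The delicate step, and the one I expect to be the main obstacle, is the combination. The quadratic forms $L_1$ and $L_2$ are not sign-definite: $L_1$ contains the negative zeroth-order term $-(\frac n4+\frac\delta2)(\frac n4+1)\int f^2\,dy$, and $L_2$ contains the indefinite cross term above. To control the first I would invoke Lemma \ref{lem_f}, $\|f\|_{L^2}^2\le\eta\|\nabla f\|_{L^2}^2+C\|\nabla F\|_{L^2}^2$: the $\eta$-term is swallowed by the positive $\frac12(1-\delta)\|\nabla f\|_{L^2}^2$ already present in $L_1$, and $\|\nabla F\|_{L^2}^2=\int|\xi|^2|\hat{F}|^2\,d\xi$ by a fraction of the term $\frac12 C_0\int|\xi|^2|\hat{F}|^2\,d\xi$ inside $C_0L_0$ — which is what dictates the hierarchy $1\ll C_1\ll C_0$; the indefinite cross term of $L_2$ is closed by Young's inequality against the positive terms of $L_2$ itself (and, if needed, slices of $E_1$, $E_2$). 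Writing $\delta_jE_j=2\lambda E_j+(\delta_j-2\lambda)E_j$ with $\lambda$ small (the origin of $0<\lambda<\min\{\frac12,\frac m2-\frac n4\}$) and summing $C_j$ times the $j$-th identity produces the displayed identity for $E_5$; the absorptions above then give the coercivity $\|f\|_{H^{1,m}}^2+\|g\|_{H^{0,m}}^2+(\alpha')^2\le CL_5(s)$, while the companion lower bound in terms of $E_5$ follows by completing the square in each $E_j$ (the mixed $\frac{e^{-s}}{b^2}fg$-type terms being of lower order for $s$ large) together with the Hardy-type bound $\|F\|_{L^2}\le C\|f\|_{H^{0,m}}$ of Lemma \ref{lem_hardy2}.

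For the remainder estimate I would split $R_5=\sum_j C_jR_j$ into three kinds of terms. First, those weighted by $\frac{e^{-s}}{b(t(s))^2}$ or $\frac{b'(t(s))}{b(t(s))^2}$: using \eqref{b}, $t(s)=B^{-1}(e^s-1)$ and \eqref{b_inv} one checks $\frac{e^{-s}}{b(t(s))^2}\le Ce^{-\frac{1-\beta}{1+\beta}s}$ and $\frac{|b'(t(s))|}{b(t(s))^2}\le Ce^{-\frac{1-\beta}{1+\beta}s}$ when $\beta\in(-1,1)$ (with genuinely exponential decay when $\beta=-1$, the exponent being read as arbitrarily large), so by Cauchy--Schwarz and the coercivity just established these are bounded by $Ce^{-\frac{1-\beta}{1+\beta}s}E_5(s)$, apart from a few purely quadratic pieces in $g$, $G$ that I would instead absorb into $\frac12 L_5$. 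Second, the linear pairings with $h$, $H$, $\hat{H}$: by the formula \eqref{h}, $h$ decomposes into such weighted-linear expressions (handled as above) plus the projected nonlinear part $e^{\frac n2(p_F-p)s}(|v|^p-(\int_{\mathbb{R}^n}|v|^p\,dy)\varphi_0)$. Third, this nonlinear part, together with the $e^{\frac n2(p_F-p)s}|v|^p$ inside $r$ and the $\int r\,dy$ couplings in $R_3$, $R_4$: writing $v=\alpha\varphi_0+f$ and using the Gagliardo--Nirenberg--Sobolev inequalities in $H^{1,m}$ (valid since $m>n/2+1$) one bounds $\| |v|^p \|_{L^1}$, $\| |v|^p \|_{L^2}$ and the $|y|^{2m}$-weighted versions by $C(|\alpha|+\|f\|_{H^{1,m}})^p\le CE_5(s)^{p/2}$; pairing with a second factor of the same size gives the term $Ce^{n(p_F-p)s}E_5^p$, and pairing with the $E_5^{1/2}$-sized factors coming from $f$, $g$, $\alpha'$ gives $Ce^{\frac n2(p_F-p)s}E_5^{(p+1)/2}$. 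Summing over $j$ and choosing $s_0$ large enough yields the asserted bound on $|R_5(s)|$.

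Finally, two remarks. Since Lemma \ref{lem_en2} is literally \cite[Lemmas 3.12--3.17]{Wa17JMAA}, the only substantive change is that the special damping $b(t)=(1+t)^{-\beta}$ there is replaced by the class \eqref{b}; this is harmless because every step above uses only the two-sided bounds \eqref{b_est}, \eqref{b_inv}. The one point to watch is in the third step: the exponents must come out exactly as $n(p_F-p)s$ and $\frac n2(p_F-p)s$, with no slack, because here $p\le p_F$ (unlike in \cite{Wa17JMAA}, where $p>p_F$ and these factors decay), so they are non-decreasing in $s$, and it is precisely the competition between them and $e^{-\frac{1-\beta}{1+\beta}s}E_5$ that is exploited in the continuity argument of the next subsection to produce the sharp power of $\varepsilon$ in Propositions \ref{prop_low} and \ref{prop_lowcr}.
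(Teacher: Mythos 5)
Your proposal is correct and follows essentially the same route as the source: the paper itself gives no proof of this lemma beyond citing \cite[Lemmas 3.12--3.17]{Wa17JMAA}, and your reconstruction — deriving each identity by differentiating $E_j$ and substituting the evolution laws, absorbing the indefinite terms of $L_1$, $L_2$ via Lemmas \ref{lem_hardy2} and \ref{lem_f} with the hierarchy $1\ll C_1\ll C_0$, and splitting $R_5$ into the $e^{-\frac{1-\beta}{1+\beta}s}$-weighted linear part and the nonlinear part carrying $e^{n(p_F-p)s}E_5^p$ and $e^{\frac{n}{2}(p_F-p)s}E_5^{(p+1)/2}$ — is exactly the argument of that reference, adapted only through the two-sided bounds \eqref{b_est}, \eqref{b_inv}. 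Your closing remark about the sign of $n(p_F-p)$ correctly identifies the one substantive difference from the setting of \cite{Wa17JMAA}.
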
%

\subsection{A priori estimate and the proof of Proposition \ref{prop_low}}\ 

By Lemmas \ref{lem_en0} and \ref{lem_en2}
with taking $0< \lambda < \min\{ \frac12, \frac{\delta}{2}, \frac{m}{2}-\frac{n}{4}\}$
and $\eta$ sufficiently small if $n \ge 2$,
we can see that
$(f,g)$ satisfies the following a priori estimate for
$s \ge s_0$.
Here we note that
the local solution exists for $s > s_0$,
provided that $\varepsilon$ is sufficiently small
by Proposition \ref{prop_loc2}.
\begin{Lemma}{\cite[(3.53)]{Wa17JMAA}}\label{lem_e5}
There exists $s_0 > 0$ such that for $s \ge s_0$, we have
\begin{align}
\label{e5est}
	\frac{d}{ds}E_5(s)
	\le C e^{-\frac{1-\beta}{1+\beta} s} E_5(s)
		+ C e^{n(p_F-p)s} E_5(s)^p
		+ C e^{\frac{n}{2}(p_F-p) s} E_5(s)^{\frac{p+1}{2}}
\end{align}
(where we interpret $1/(1+\beta)$ as an arbitrarily large number
when $\beta = -1$).
\end{Lemma}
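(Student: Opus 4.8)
The plan is to read \eqref{e5est} off directly from the energy identities and estimates already assembled in Lemmas \ref{lem_en0} and \ref{lem_en2}. Once $\lambda$ is fixed with $0<\lambda<\min\{\frac12,\frac{\delta}{2},\frac{m}{2}-\frac n4\}$ and, when $n\ge2$, $\eta$ is taken sufficiently small, those lemmas provide, for $s\ge s_0$ with $s_0$ large, the identity
\[
	\frac{d}{ds}E_5(s)+2\lambda\sum_{j=0}^3C_jE_j(s)+L_5(s)=R_5(s),
\]
the coercivity bound $\|f(s)\|_{H^{1,m}}^2+\|g(s)\|_{H^{0,m}}^2+(\frac{d\alpha}{ds}(s))^2\le CL_5(s)$, its companion $\|f(s)\|_{H^{1,m}}^2+\frac{e^{-s}}{b(t)^2}\|g(s)\|_{H^{0,m}}^2+\alpha(s)^2+\frac{e^{-s}}{b(t)^2}(\frac{d\alpha}{ds}(s))^2\le CE_5(s)$, and the remainder estimate
\[
	|R_5(s)|\le\frac12L_5(s)+Ce^{-\frac{1-\beta}{1+\beta}s}E_5(s)+Ce^{n(p_F-p)s}E_5(s)^p+Ce^{\frac n2(p_F-p)s}E_5(s)^{\frac{p+1}{2}}.
\]

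First I would solve the identity for $\frac{d}{ds}E_5(s)$ and insert the remainder bound, obtaining for $s\ge s_0$
\[
	\frac{d}{ds}E_5(s)\le-2\lambda\sum_{j=0}^3C_jE_j(s)-\frac12L_5(s)+Ce^{-\frac{1-\beta}{1+\beta}s}E_5(s)+Ce^{n(p_F-p)s}E_5(s)^p+Ce^{\frac n2(p_F-p)s}E_5(s)^{\frac{p+1}{2}}.
\]
The whole point is then to discard the first two terms on the right. The first coercivity bound gives $L_5(s)\ge0$ at once. For $\sum_{j=0}^3C_jE_j(s)$ one notes that $E_3(s)=\frac12\frac{e^{-s}}{b(t(s))^2}(\frac{d\alpha}{ds}(s))^2+e^{-2\lambda s}\alpha(s)^2\ge0$ as it stands, whereas $E_0,E_1,E_2$ are quadratic forms in $(f,g)$ (respectively in $(\hat F,\hat G)$) whose only indefinite contribution is a cross term carrying the weight $\frac{e^{-s}}{b(t(s))^2}$; since $b$ obeys \eqref{b} with $\beta\in[-1,1)$, the estimates \eqref{b_est}--\eqref{b_inv} yield $\frac{e^{-s}}{b(t(s))^2}\to 0$ as $s\to\infty$, so after enlarging $s_0$ if necessary these three forms are nonnegative on $[s_0,\infty)$. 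Hence $-2\lambda\sum_{j=0}^3C_jE_j(s)-\frac12L_5(s)\le0$ for $s\ge s_0$, and \eqref{e5est} follows.

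I do not expect a genuine obstacle here: this lemma is a bookkeeping corollary of Lemmas \ref{lem_en0} and \ref{lem_en2}, where the substantive work (the energy computations for the decomposed system \eqref{eq_fg}, the Hardy-type inequalities of Lemmas \ref{lem_hardy} and \ref{lem_hardy2}, and the control of the nonlinear remainder) has already been done. The one point deserving a remark is the degenerate case $\beta=-1$, where $\frac{1}{1+\beta}$ is read as an arbitrarily large number: there $b(t)\sim1+t$ while $t(s)=B^{-1}(e^s-1)$ grows doubly exponentially in $s$ by \eqref{b_inv}, so $\frac{e^{-s}}{b(t(s))^2}$ decays faster than any exponential, every ``$e^{-\frac{1-\beta}{1+\beta}s}$'' contribution is in fact $O(e^{-s})$, and the argument above applies without change.
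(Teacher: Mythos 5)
Your proposal is correct and follows exactly the route the paper intends: the lemma is stated as a direct consequence of Lemmas \ref{lem_en0} and \ref{lem_en2}, obtained by inserting the bound on $|R_5|$ into the identity for $\frac{d}{ds}E_5$ and discarding the nonnegative terms $2\lambda\sum_{j=0}^3C_jE_j(s)$ and $\frac12 L_5(s)$, the nonnegativity of $E_0,E_1,E_2$ for large $s$ coming from a weighted Young inequality on the cross terms once $\frac{e^{-s}}{b(t(s))^2}$ is small. The only detail worth recording is that this smallness uses $\beta\in[-1,1)$ (indeed $\frac{e^{-s}}{b(t(s))^2}\sim e^{-\frac{1-\beta}{1+\beta}s}$), which you note correctly, including the degenerate case $\beta=-1$.
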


Now we are in a position to prove Proposition \ref{prop_low}.
\begin{proof}[Proof of Proposition \ref{prop_low}]
Let $\varepsilon_1 > 0$ be sufficiently small so that
the local solution $(v,w)$ of \eqref{eq_vw} exists for 
$s > s_0$
(see Proposition \ref{prop_loc2}).
Therefore, by Lemma \ref{lem_e5}, we see that
$(f,g)$ satisfies the a priori estimate \eqref{e5est}.
We put
\[
	\Lambda (s) := \exp \left( -C\int_{s_0}^s e^{- \frac{1-\beta}{1+\beta} \tau}\, d\tau \right)
\]
(where we interpret $1/(1+\beta)$ as an arbitrarily large number
when $\beta = -1$).
We note that
$c_0 \le \Lambda(s) \le 1$ holds for some $c_0 > 0$,
and $\Lambda(s_0) = 1$.
Multiplying \eqref{e5est} by $\Lambda(s)$
and integrating it over $[s_0, s]$, we see that
\[
	\Lambda(s) E_5(s) \le E_5(s_0)
		+ C \int_{s_0}^s
		\left[ \Lambda(\tau) e^{n(p_F-p) \tau} E_5(\tau)^p
			+ \Lambda(\tau) e^{\frac{n}{2}(p_F-p) \tau} E_5(\tau)^{\frac{p+1}{2}}
						\right] \, d\tau
\]
holds for $s_0 \le s < S_0(\varepsilon)$.
Putting
\[
	M(s) := \sup_{s_0 \le \tau \le s} E_5(\tau)
\]
and noting
\[
	M(s_0) \le C(s_0) \varepsilon^2 \| (a_0, a_1) \|_{H^{1,m}\times H^{0,m}}^2,
\]
which can be easily proved by local existence result
(see the proof of \cite[Proposition 3.5]{Wa17JMAA}),
we have
\begin{align}
\label{estm}
	M(s) \le
		C_0^{\prime} \varepsilon^2 I_0
		+ C_0^{\prime}
			\left( e^{n(p_F-p)s} M(s)^p + e^{\frac{n}{2}(p_F-p)s } M(s)^{\frac{p+1}{2}} \right)
\end{align}
for $s_0 \le s < S_0(\varepsilon)$ and some $C_0^{\prime} > 0$,
where
$I_0 = \| (a_0, a_1) \|_{H^{1,m}\times H^{0,m}}^2$.
Let
$S_1 = S_1(\varepsilon) \ge s_0$ is the first time
such that $M$ attains the value
\[
	M(S_1) = 2 C_0^{\prime} \varepsilon^2 I_0.
\]
We note that if $S_0(\varepsilon) = \infty$,
then Proposition \ref{prop_low} obviously holds, 
and if $S_0(\varepsilon) < \infty$,
then such $S_1$ actually exists because
$\lim_{s \to S_0(\varepsilon)} M(s) = \infty$.
Thus, in what follows we assume $S_0(\varepsilon) < \infty$.
Then, substituting $s= S_1$ in \eqref{estm}, we see that
\begin{align*}
	C_0^{\prime} \varepsilon^2 I_0
	&\le 2C_0^{\prime}
	\max\left\{ e^{n(p_F-p)S_1} (2C_0^{\prime} \varepsilon^2 I_0)^p,
		e^{\frac{n}{2}(p_F-p)S_1 } (2C_0^{\prime} \varepsilon^2 I_0)^{\frac{p+1}{2}}
		\right\}.
\end{align*}
No matter which quantity attains the maximum,
we obtain
\[
	\varepsilon^{-\frac{2(p-1)}{n(p_F-p)}}  \le C e^{S_1}.
\]
Thus, we conclude
\[
	\varepsilon^{-\frac{1}{\frac{1}{p-1} - \frac{n}{2}}}
	\le C \left( B(T_0(\varepsilon)) + 1 \right).
\]
This and the definition of $B(t)$ lead to the
desired estimate, and we finish the proof.
\end{proof}

\begin{proof}[Proof of Proposition \ref{prop_lowcr}]
In the same way to the derivation of \eqref{estm},
noting $p=p_F$, we have
\begin{align}
\label{estmcr}
	M(s) \le
		C_0^{\prime} \varepsilon^2 I_0
		+ C_0^{\prime}
			(s-s_0) \left( M(s)^p + M(s)^{\frac{p+1}{2}} \right)
\end{align}
for $s_0 \le s < S_0(\varepsilon)$ and some $C_0^{\prime} > 0$,
Let
$S_1 = S_1(\varepsilon) \ge s_0$ is the first time
such that $M$ attains the value
\[
	M(S_1) = 2 C_0^{\prime} \varepsilon^2 I_0.
\]
Moreover,
we take
$\varepsilon_2 \le \varepsilon_1$ further small so that
$2 C_0^{\prime} \varepsilon^2 I_0 \le 1$
holds for $\varepsilon \in (0,\varepsilon_2]$.
Then, it is obvious that
$M(S_1)^p \le M(S_1)^{\frac{p+1}{2}}$ for $\varepsilon \in (0,\varepsilon_2]$
and hence, we eventually obtain
\[
	2 C_0^{\prime} \varepsilon^2 I_0
	\le C_0^{\prime} \varepsilon^2 I_0
		+ 2C_0^{\prime} (S_1 - s_0) \left( 2 C_0^{\prime} \varepsilon^2 I_0 \right)^{\frac{p+1}{2}}.
\]
This implies
\[
	\exp \left( C \varepsilon^{-(p-1)} + s_0 \right) \le B(T_0) + 1.
\]
Therefore, by the definition of $B(t)$,
we have the desired estimate.
\end{proof}

\section*{Appendix}
Here, we prove existence of a unique strong solution
in the sense of Definition \ref{def_sol}
for the Cauchy problem \eqref{eq:1}.
\begin{Proposition}\label{prop_le}
Let
$p$
satisfy
$1<p <\infty \ (n=1,2)$, $1<p \le n/(n-2) \ (n\ge 3)$.
We assume that
$b(t)$ is a smooth nonnegative function.
Let
$(u_0, u_1) \in H^1(\mathbb{R}^n) \times L^2(\mathbb{R}^n)$.
Then, there exist a constant $T>0$ and a unique strong solution $u$
of the Cauchy problem \eqref{eq:1} on $[0,T)$.
Moreover, we have the blow-up alternative, that is,
for the lifespan $T_0$ defined in Definition \ref{def_ls},
if $T_0 < \infty$, then
\begin{align*}%
	\lim_{t \to T_0-} \| (u, u_t)(t) \|_{H^1\times L^2} = \infty.
\end{align*}%
\end{Proposition}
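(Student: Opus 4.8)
The plan is to reformulate \eqref{eq:1} as a Duhamel integral equation in the energy space, solve it by a contraction mapping argument, and then recover the $C^{2}([0,T);H^{-1})$ regularity directly from the equation. Writing the damping term together with the nonlinearity as a forcing term, I set $v=u_{t}$ and look for a fixed point of the map $\Psi(u,v)=(w,w_{t})$, where $w$ solves the free wave equation $\Box w=-b(t)v+|u|^{p}$ with data $(u_{0},u_{1})$, i.e.
\[
	w(t)=\cos(t\sqrt{-\Delta})u_{0}+\frac{\sin(t\sqrt{-\Delta})}{\sqrt{-\Delta}}u_{1}
	+\int_{0}^{t}\frac{\sin((t-s)\sqrt{-\Delta})}{\sqrt{-\Delta}}\bigl(-b(s)v(s)+|u(s)|^{p}\bigr)\,ds .
\]
At a fixed point one automatically has $v=w_{t}=u_{t}$, so $(u,u_{t})$ solves \eqref{eq:1}. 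The energy identity for the free wave equation gives, for $T\le1$,
\[
	\|(w,w_{t})(t)\|_{H^{1}\times L^{2}}
	\le C\|(u_{0},u_{1})\|_{H^{1}\times L^{2}}
	+C\int_{0}^{t}\bigl(\|b\|_{L^{\infty}([0,1])}\|v(s)\|_{L^{2}}+\||u(s)|^{p}\|_{L^{2}}\bigr)\,ds ,
\]
the $L^{2}$ part of the $H^{1}$ norm being absorbed via $\|w(t)\|_{L^{2}}\le\|u_{0}\|_{L^{2}}+\int_{0}^{t}\|w_{t}\|_{L^{2}}$. The only term requiring care is $\||u|^{p}\|_{L^{2}}=\|u\|_{L^{2p}}^{p}$, and this is where the hypothesis on $p$ is used: the range $1<p<\infty$ $(n=1,2)$, $1<p\le n/(n-2)$ $(n\ge3)$ is exactly the one for which $H^{1}(\mathbb R^{n})\hookrightarrow L^{2p}(\mathbb R^{n})$, so $\||u|^{p}\|_{L^{2}}\le C\|u\|_{H^{1}}^{p}$, and by the elementary inequality $\bigl||a|^{p}-|b|^{p}\bigr|\le p(|a|^{p-1}+|b|^{p-1})|a-b|$ together with H\"older and Sobolev one gets the Lipschitz bound $\||u|^{p}-|\tilde u|^{p}\|_{L^{2}}\le C(\|u\|_{H^{1}}^{p-1}+\|\tilde u\|_{H^{1}}^{p-1})\|u-\tilde u\|_{H^{1}}$.

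Next I would fix $M=2C\|(u_{0},u_{1})\|_{H^{1}\times L^{2}}$ and work in the closed ball of radius $M$ in $C([0,T];H^{1})\cap C^{1}([0,T];L^{2})$ with its natural metric. The estimates above show that for $T$ small enough, depending only on $M$ and $\|b\|_{L^{\infty}([0,1])}$, the map $\Psi$ sends this ball into itself and is a contraction, yielding a unique strong solution on $[0,T)$; uniqueness among all strong solutions follows from the same Lipschitz estimate and Gronwall's inequality applied to the difference of two solutions. To check that the solution is strong in the sense of Definition \ref{def_sol}, I read the regularity off the equation: since $u\in C([0,T);H^{1})\cap C^{1}([0,T);L^{2})$, the right-hand side of $u_{tt}=\Delta u-b(t)u_{t}+|u|^{p}$ lies in $C([0,T);H^{-1})$, because $\Delta u\in C([0,T);H^{-1})$, $b(t)u_{t}\in C([0,T);L^{2})\hookrightarrow C([0,T);H^{-1})$, and $t\mapsto|u(t)|^{p}$ is continuous into $L^{2}$ (as $t\mapsto u(t)$ is continuous into $L^{2p}$ and $w\mapsto|w|^{p}$ is continuous from $L^{2p}$ to $L^{2}$, again by the elementary inequality). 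Hence $u\in C^{2}([0,T);H^{-1})$ and \eqref{eq:1} holds in $C^{2}([0,T);H^{-1})$.

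For the blow-up alternative I would use that the existence time $T$ above depends only on the size $\|(u_{0},u_{1})\|_{H^{1}\times L^{2}}$ of the data, since $b$ being smooth makes $\|b\|_{L^{\infty}}$ finite on any bounded interval. Suppose $T_{0}<\infty$ but the conclusion fails; then there are $C_{0}>0$ and $t_{k}\uparrow T_{0}$ with $\|(u,u_{t})(t_{k})\|_{H^{1}\times L^{2}}\le C_{0}$. Solving \eqref{eq:1} with initial time $t_{k}$ and data $(u,u_{t})(t_{k})$ produces a strong solution on $[t_{k},t_{k}+\tau)$ with $\tau=\tau(C_{0})>0$ independent of $k$; for $k$ large, $t_{k}+\tau>T_{0}$, and by uniqueness this solution coincides with $u$ on $[t_{k},T_{0})$, hence extends $u$ to a strong solution beyond $T_{0}$, contradicting the definition of $T_{0}$. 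Therefore $\lim_{t\to T_{0}-}\|(u,u_{t})(t)\|_{H^{1}\times L^{2}}=\infty$. There is no deep obstacle here — the argument is the standard energy-method local theory — and the only genuinely delicate point is that the nonlinear and Lipschitz estimates close in $C([0,T];H^{1})$ precisely on the stated range of $p$, the embedding $H^{1}\hookrightarrow L^{2p}$ being sharp at $p=n/(n-2)$, together with the bookkeeping that makes the existence time depend only on the data size so the continuation argument applies.
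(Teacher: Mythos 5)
Your argument is correct and rests on the same pillars as the paper's proof: a contraction in the energy space $C([0,T];H^1)\cap C^1([0,T];L^2)$, the Sobolev embedding $H^1\hookrightarrow L^{2p}$ (which is exactly where the stated range of $p$ enters), the pointwise inequality $\bigl||a|^p-|b|^p\bigr|\le C(|a|+|b|)^{p-1}|a-b|$ for the Lipschitz bound, Gronwall for uniqueness, and the standard continuation argument for the blow-up alternative. The one genuine difference is the choice of linear part. You treat $-b(t)u_t$ as a forcing term and iterate against the \emph{free} wave propagator $\cos(t\sqrt{-\Delta})$, $\sin(t\sqrt{-\Delta})/\sqrt{-\Delta}$; the paper instead keeps the damping inside the linear operator, solves $\Box u+b(t)u_t=F$ by Fourier transform and Duhamel, and uses the sign condition $b\ge 0$ to discard the damping term in the energy identity, obtaining $\|(\nabla u,u_t)(t)\|_{L^2}\le\|(\nabla u_0,u_1)\|_{L^2}+\int_0^t\|F\|_{L^2}$ with constants independent of $b$. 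Your version buys generality (nonnegativity of $b$ is never used; local boundedness suffices) at the price of an existence time that also depends on $\|b\|_{L^\infty}$ over the relevant interval — harmless here, and you correctly note that smoothness of $b$ keeps this finite on $[0,T_0+1]$ so the continuation step closes. The paper's version buys a data-size-only existence time directly. A second, cosmetic difference: the paper recovers $u\in C^2([0,T);H^{-1})$ by a uniform-convergence/limit-interchange argument on the Picard iterates $u^{(j)}_{tt}$, whereas you read it off the Duhamel formula at the fixed point, where the forcing $-b(t)u_t+|u|^p$ lies in $C([0,T);L^2)$; your route is cleaner, though you should state explicitly that it is the twice-differentiability of the inhomogeneous Duhamel term in $H^{-1}$ (not the equation alone) that justifies the existence of $u_{tt}$.
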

\begin{proof}
Let $(u_0, u_1) \in H^1(\mathbb{R}^n) \times L^2(\mathbb{R}^n)$
and let
$F \in L^1_{{\rm loc}} ([0,\infty); L^2(\mathbb{R}^n))$.
First, we recall that the strong solution of the linear problem
\begin{align}%
\label{lin_dw}
	\left\{ \begin{array}{ll}
		\square u + b(t) u_t = F,&t>0, x \in \mathbb{R}^n,\\
		u(0) = u_0, \ u_t(0) = u_1,& x\in \mathbb{R}^n
	\end{array}\right.
\end{align}%
can be easily obtained via the Fourier transform and the Duhamel principle,
and the corresponding solution belongs to
$C^1([0,T) ; L^2(\mathbb{R}^n)) \cap C ([0,T) ; H^1(\mathbb{R}^n))$.
Moreover, by an approximation argument, we see that the solution $u$
satisfies the energy identity
\begin{align*}%
	&\frac12 \| (\nabla_xu, u_t)(t) \|_{L^2}^2 
	+ \int_0^t \int_{\mathbb{R}^n} b(\tau) u_t(\tau, x)^2\,dxd\tau \\
	&\quad = \frac12 \| (\nabla_x u_0, u_1) \|_{L^2}^2
	+ \int_0^t \int_{\mathbb{R}^n} F(\tau, x) u_t(\tau,x) \,dxd\tau
\end{align*}%
for $t > 0$.
Combining this with a Gronwall-type inequality (see \cite[Lemma 9.12]{Wa_thesis})
and the assumption $b(t) \ge 0$,
we further obtain
\begin{align}%
\label{en_es}
	\| (\nabla_xu, u_t)(t) \|_{L^2}
	\le \| (\nabla_x u_0, u_1) \|_{L^2}
		+ \int_0^t \| F(\tau) \|_{L^2} \,dxd\tau
\end{align}%
for $t > 0$.
To construct a solution of the Cauchy problem \eqref{eq:1},
we employ the contraction mapping principle.
To this end, we take a constant $R>0$ such that
$ \| (u_0, u_1) \|_{H^1 \times L^2} \le R$
and define
\begin{align*}%
	&K(T,R) \\
	&:=
	\left\{ v \in C^1([0,T) ; L^2(\mathbb{R}^n)) \cap C ([0,T) ; H^1(\mathbb{R}^n))
	\,;\, \sup_{t\in [0,T)}\| (v, v_t )(t) \|_{H^1 \times L^2} \le 3R \right\}.
\end{align*}%
We define the metric
\begin{align*}%
	d(u,v) := \sup_{t\in [0,T)} \| (u-v, u_t - v_t)(t) \|_{H^1\times L^2}
\end{align*}%
for $u, v \in K(T,R)$.
Then, $K(T,R)$ is a complete metric space with the metric $d(\cdot, \cdot)$.
Let $1<p <\infty\ (n=1,2)$, $1<p \le n/(n-2)\ n\ge 3$.
Let
$u^{(0)}$
be the solution of the linear problem \eqref{lin_dw} with $F=0$.
Then, for $j =1, 2, \ldots$, we successively define
$u^{(j)}$
as the solution of the linear problem \eqref{lin_dw} with $F = |u^{(j-1)}|^p$.
By \eqref{en_es}, the first approximation
$u^{(0)}$ clearly belongs to $K(T,R)$.
Furthermore, by the Sobolev embedding theorem,
we see that if $u^{(j-1)} \in K(T,R)$, then
\begin{align*}%
	\| (\nabla u^{(j)}, u^{(j)}_t)(t) \|_{L^2}
	&\le R + \int_0^t \| u^{(j-1)}(\tau) \|_{L^{2p}}^p\,d\tau \\
	&\le R + C \int_0^t \| u^{(j-1)}(\tau) \|_{H^1}^p\, d\tau \\
	&\le R + C T R^p.
\end{align*}%
We also estimate
\begin{align*}%
	\| u^{(j)} (t) \|_{L^2} &\le \| u_0 \|_{L^2} + \int_0^t \| u^{(j)}_t (\tau) \|_{L^2} \,d\tau \\
	&\le R + T(R + C T R^p).
\end{align*}%
Therefore, taking $T>0$ sufficiently small so that
$C T R^p + T(R + C T R^p) \le R$ holds, we obtain
$u^{(j)} \in K(T,R)$.
Thus, it follows from the mathematical induction that
$u^{(j)} \in K(T,R)$ for all $j \ge 0$.
Next, making use of
\begin{align*}%
	| |u|^p - |v|^p | \le C (|u| + |v|)^{p-1} |u-v|,
\end{align*}%
we estimate
\begin{align*}%
	&\| (\nabla u^{(j)}(t) - \nabla u^{(j-1)}(t), u^{(j)}_t(t) - u^{(j-1)}_t(t)) \|_{L^2} \\
	&\quad \le C \int_0^t
	\| (|u^{(j-1)}(\tau)| + |u^{(j-2)}(\tau)| )^{p-1}
	|u^{(j-1)}(\tau) - u^{(j-2)}(\tau)| \|_{L^2}\, d\tau\\
	&\quad \le C \int_0^t \| (|u^{(j-1)}(\tau)| + |u^{(j-2)}(\tau)| ) \|_{L^{2p}}^{p-1}
					\| u^{(j-1)}(\tau) - u^{(j-2)}(\tau) \|_{L^{2p}}\, d\tau \\
	&\quad \le C \int_0^t
			( \| u^{(j-1)}(\tau) \|_{H^1} + \| u^{(j-2)}(\tau) \|_{H^1} )^{p-1}
			\| u^{(j-1)}(\tau) - u^{(j-2)}(\tau) \|_{H^1}\,d\tau \\
	&\quad \le C T R^{p-1} d(u^{(j-1)}, u^{(j-2)}).
\end{align*}%
We also have
\begin{align*}%
	\| u^{(j)}(t) - u^{(j-1)}(t) \|_{L^2}
	&\le \int_0^t \| u^{(j)}_t (\tau) - u^{(j-1)}_t(\tau) \|_{L^2}\,d\tau \\
	&\le CT^2 R^{p-1} d(u^{(j-1)}, u^{(j-2)}).
\end{align*}%
Therefore, taking $T >0$ further small so that
$CTR^{p-1} + CT^2 R^{p-1} \le \kappa$
with some $\kappa \in (0,1)$,
we conclude
\begin{align*}%
	d(u^{(j)}, u^{(j-1)}) \le \kappa d(u^{(j-1)}, u^{(j-2)}).
\end{align*}%
Hence, $\{ u^{(j)} \}_{j \ge 0}$ is a Cauchy sequence in $K(T,R)$
and we find the limit $u \in K(T,R)$.
Since each $u^{(j)}$ satisfies the initial conditions
$u^{(j)}(0) = u_0$ and $u^{(j)}_t (0) = u_1$, so does $u$.
Moreover, taking the limit $j \to \infty$ in the equation
\begin{align*}%
	u^{(j)}_{tt} =  \Delta u^{(j)} - b(t) u^{(j)}_t + |u^{(j)}|^p,
\end{align*}%
which is valid in $C([0,T) ; H^{-1}(\mathbb{R}^n))$,
we see that
$u \in C^2([0,T) ; H^{-1}(\mathbb{R}^n))$
holds and
$u$ is a strong solution of \eqref{eq:1}.
Indeed, let $t_0 \in [0,T)$ and take a small neighborhood $\omega$ of $t_0$ in $[0,T)$.
Then, we have
\begin{align*}%
	\Delta u^{(j)} \to \Delta u,\quad
	b(t) u^{(j)}_t \to b(t) u_t,\quad
	|u^{(j)}|^p \to |u|^p
\end{align*}%
in $C(\omega ; H^{-1}(\mathbb{R}^n))$ as $j \to \infty$,
and this convergence is uniform in $\omega$.
Thus, there exists $w \in C(\omega ; H^{-1}(\mathbb{R}^n))$ such that
$u^{(j)}_{tt} \to w$ in $C(\omega ; H^{-1}(\mathbb{R}^n))$ as $j \to \infty$,
and this convergence is also uniform in $\omega$.
Thus, we compute
\begin{align*}%
	&\lim_{h\to 0} \left\| \frac{1}{h}( u_t(t_0 + h) - u_t(t_0)) - w(t_0) \right\|_{H^{-1}} \\
	&\quad = \lim_{h\to 0} \lim_{j \to \infty}
		\left\| \frac{1}{h}( u^{(j)}_t(t_0 + h) - u^{(j)}_t(t_0)) - u^{(j)}_{tt} (t_0) \right\|_{H^{-1}} \\
	&\quad = \lim_{j \to \infty} \lim_{h\to 0}
		\left\| \frac{1}{h}( u^{(j)}_t(t_0 + h) - u^{(j)}_t(t_0)) - u^{(j)}_{tt} (t_0) \right\|_{H^{-1}} \\
	&\quad = 0,
\end{align*}%
which leads to $u_{tt} = w$ and hence,
$u \in C^2([0,T) ; H^{-1}(\mathbb{R}^n))$.

In order to show the uniqueness, let
$u, v$ are strong solutions of \eqref{eq:1} on $[0,T)$ with the initial data $(u_0, u_1)$.
Let
$M = \sup_{t\in [0,T)}
( \| (u(t), u_t(t)) \|_{H^1\times L^2} + \| (v(t), v_t(t)) \|_{H^1\times L^2} )$.
Then, a similar argument above implies
\begin{align*}%
	&\| (u(t) - v(t), u_t(t) - v_t(t)) \|_{H^1\times L^2} \\
	&\quad \le (1+C M^{p-1})
		\int_0^t \| (u(\tau ) - v(\tau), u_t(\tau) - v_t(\tau)) \|_{H^1\times L^2}.
\end{align*}%
This and the Gronwall inequality give $u = v$.

Finally, we prove the blow-up alternative.
Let the lifespan $T_0$ be finite and we suppose that
\begin{align*}%
	\lim_{t \to T_0-} \| (u, u_t)(t) \|_{H^1\times L^2} < \infty.
\end{align*}%
Then, there exists a constant $R'>0$ such that
$\| (u, u_t)(t) \|_{H^1\times L^2} \le R'$
for any $t \in [0,T_0)$.
Let $t_0 \in [0,T_0)$ be arbitrarily fixed.
In the same way as before, we see that
there exists $T'>0$ depending only on $R'$ such that
we can construct a unique strong solution on $[t_0, t_0+T')$.
However, if $t_0 \in (T_0 - T', T_0)$, this contradicts
the definition of the lifespan.
\end{proof}

\section*{Acknowledgments}
The authors are deeply grateful to Professor Mitsuru Sugimoto
for his helpful comments.
The first, second and third authors were
partly supported by the Japan Society for the Promotion of Science,
Grant-in-Aid for JSPS Fellows No.
16J30008, 
14J01884
and
15J01600,
respectively.

\end{document}